 \renewcommand*{\backrefalt}[4]{%
    \ifcase #1%
     \or (cited on page:~#2)%
     \else (cited on pages:~#2)%
    \fi%
    }
\renewcommand{\algorithmiccomment}[1]{\bgroup\small\hfill//~#1\egroup}
\crefname{assume}{assumption}{assumptions}
\title{\bf Nest Your Adaptive Algorithm for Parameter-Agnostic\\ Nonconvex Minimax Optimization}
\author{
Junchi Yang
$^\dagger$
$^*$
\and
Xiang Li
$^\dagger$
$^*$
\and
Niao He
$^\dagger$
}
\date{\vspace{1ex}}
\begin{document}
\maketitle
\def\thefootnote{$*$}\footnotetext{
Equal contribution.
}
\def\thefootnote{$\dagger$}\footnotetext{Department of Computer Science, ETH Zurich, Switzerland.
\texttt{junchi.yang@inf.ethz.ch},
\texttt{xiang.li@inf.ethz.ch},
\texttt{niao.he@inf.ethz.ch}.
}
\def\thefootnote{\arabic{footnote}}

\begin{abstract}
     Adaptive algorithms like AdaGrad and AMSGrad are successful in nonconvex optimization owing to their \textit{parameter-agnostic} ability -- requiring no a priori knowledge about problem-specific parameters nor tuning of learning rates. However, when it comes to nonconvex minimax optimization, direct extensions of such adaptive optimizers without proper \emph{time-scale separation} may fail to work in practice.  We provide such an example proving that the simple combination of Gradient Descent Ascent (GDA) with adaptive stepsizes can diverge if the primal-dual stepsize ratio is not carefully chosen; hence, a fortiori, such adaptive extensions are not parameter-agnostic. To address the issue, we formally introduce a Nested Adaptive framework, NeAda for short, that carries an inner loop for adaptively maximizing the dual variable with controllable stopping criteria and an outer loop for adaptively minimizing the primal variable.
       Such mechanism can be equipped with off-the-shelf adaptive optimizers and automatically balance the progress in the primal and dual variables.
  Theoretically, for nonconvex-strongly-concave minimax problems,
  we show that NeAda with AdaGrad stepsizes can achieve the near-optimal $\widetilde{O}(\epsilon^{-2})$ and $\widetilde{O}(\epsilon^{-4})$ gradient complexities respectively in the deterministic and stochastic settings,  \textit{without} prior information on the problem's smoothness and strong concavity parameters.
    To the best of our knowledge, this is the first algorithm that simultaneously achieves near-optimal convergence rates and parameter-agnostic adaptation in the nonconvex minimax setting. Numerically, we further illustrate the robustness of the NeAda family with  experiments on  simple test functions and a real-world application.

\end{abstract}

\section{Introduction}

Adaptive gradient methods, whose stepsizes and search directions are adjusted based on past gradients, have received phenomenal popularity and are proven successful in a variety of large-scale machine learning applications. Prominent examples include AdaGrad~\citep{duchi2011adaptive}, RMSProp~\citep{hinton2012neural}, AdaDelta~\citep{zeiler2012adadelta}, Adam~\citep{kingma2015adam}, and AMSGrad~\citep{reddi2018convergence}, just to name a few. Their empirical success is especially pronounced for nonconvex optimization such as training deep neural networks.  Besides improved performance, being \textit{parameter-agnostic} is another important trait of adaptive methods. Unlike (stochastic) gradient descent,  adaptive methods often do not require a priori knowledge about problem-specific parameters (such as Lipschitz constants, smoothness, etc.).\footnote{For distinction, we use ``parameter-agnostic'' to describe algorithms that do not ask for problem-specific parameters in setting their stepsizes or hyperparameters;  we refer to "adaptive algorithms" as methods whose stepsizes are based on the previously observed gradients.  }   On the theoretical front, some adaptive methods can achieve nearly the same convergence guarantees as (stochastic) gradient descent~\citep{duchi2011adaptive, ward2019adagrad,reddi2018convergence}.

Recently, adaptive methods have sprung up for minimax optimization:
\begin{equation}\label{eq:main_problems}
  \min_{x\in \mathbb{R}^{d}}\max_{y\in \mathcal{Y}}\ f(x, y) \triangleq \mathbb{E}[F(x, y ; \xi)],
\end{equation}
where $f$ is $l$-Lipschitz smooth jointly in $x$ and $y$, $\mathcal{Y}$ is closed and convex, and $\xi$ is a random vector. Such problems have found numerous applications in generative adversarial networks (GANs)~\citep{goodfellow2014generative,arjovsky2017wasserstein}, Wasserstein GANs~\citep{arjovsky2017wasserstein}, generative adversarial imitation learning \citep{ho2016generative}, reinforcement learning \citep{dai2017learning, modi2021model}, adversarial training~\citep{tramer2018ensemble}, domain-adversarial training of neural networks~\citep{ganin2016domain} , etc.

A common practice is to simply combine adaptive stepsizes with popular minimax optimization algorithms such as Gradient Descent Ascent (GDA), extragradient method (EG) and the like; see e.g., \citep{gidel2018variational, gulrajani2017improved,goodfellow2016nips}. It is worth noting that  these methods are reported successful in some applications yet at other times can suffer from training instability.   In recent years,  theoretical behaviors of such adaptive methods are extensively studied for convex-concave minimax optimization; see e.g., ~\citep{bach2019universal, antonakopoulos2019adaptive, antonakopoulos2021adaptive2, ene2020adaptive, stonyakin2018generalized, gasnikov2019adaptive, malitsky2020golden, diakonikolas2020halpern}. However, for minimax optimization in the important nonconvex regime, little theory related to adaptive methods is known.  

Unlike the convex-concave setting, a key challenge for nonconvex minimax optimization lies in the necessity of a \emph{problem-specific time-scale separation} of the learning rates between the min-player and max-player when GDA or EG methods are applied, as proven in~\cite{yang2021faster,lin2020gradient, sebbouh2022randomized, boct2020alternating}.  This makes the design of adaptive methods fundamentally different from and more challenging than nonconvex minimization. Several recent attempts~\citep{guo2021novel,huang2021adagda,huang2021efficient} studied adaptive methods for nonconvex-strongly-concave minimax problems;  yet,  they all require explicit knowledge of the problems' smoothness and strong concavity parameters  to maintain a stepsize ratio proportional to the condition number. Such a requirement evidently undermines the parameter-agnostic trait of adaptive methods. This then raises a couple of interesting questions: (1) \emph{Without a problem-dependent stepsize ratio, does simple combination of GDA and adaptive stepsizes still converge?} (2) \emph{Can we design an adaptive algorithm for nonconvex minimax optimization that is truly parameter-agnostic and provably convergent?}

\begin{figure}[t]
    \centering
    \includegraphics[width=\textwidth]{./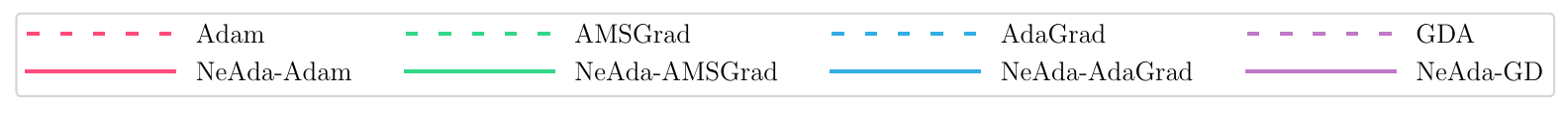}
    \begin{subfigure}[b]{0.25\textwidth}
      \centering
      \includegraphics[width=\textwidth]{./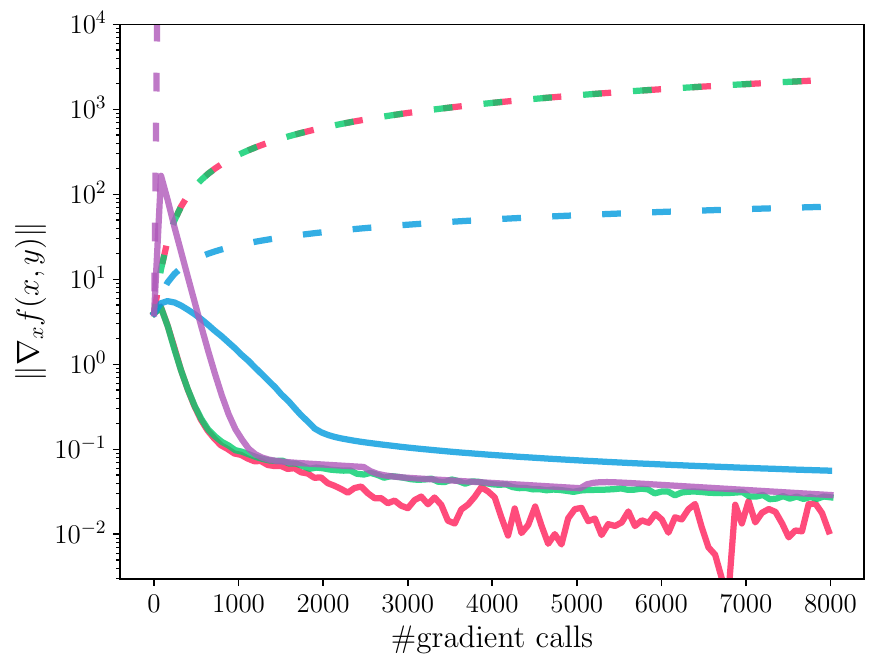}
      \caption{$r = 1$}
    \end{subfigure}
    \begin{subfigure}[b]{0.24\textwidth}
      \centering
      \includegraphics[width=\textwidth]{./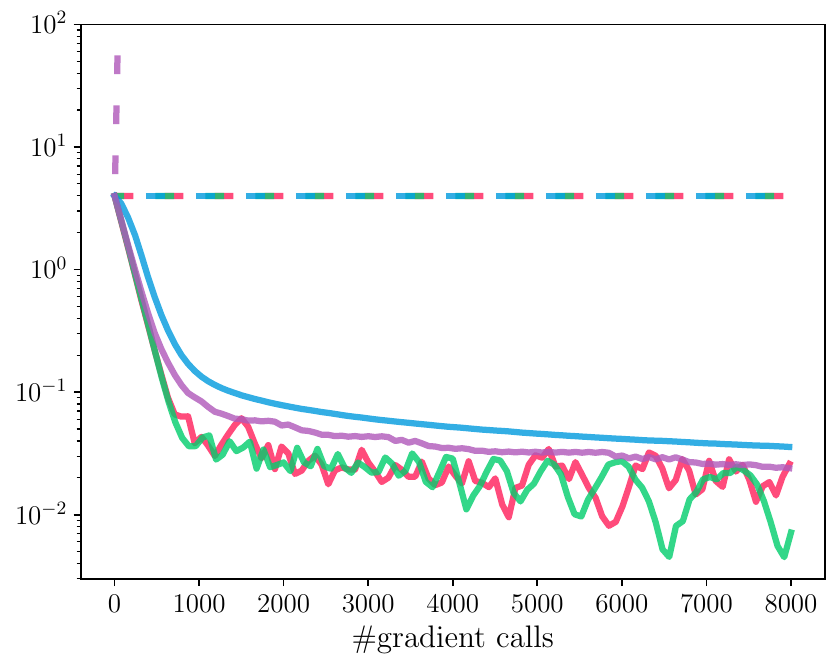}
      \caption{$r = 2$}
    \end{subfigure}
    \begin{subfigure}[b]{0.24\textwidth}
      \centering
      \includegraphics[width=\textwidth]{./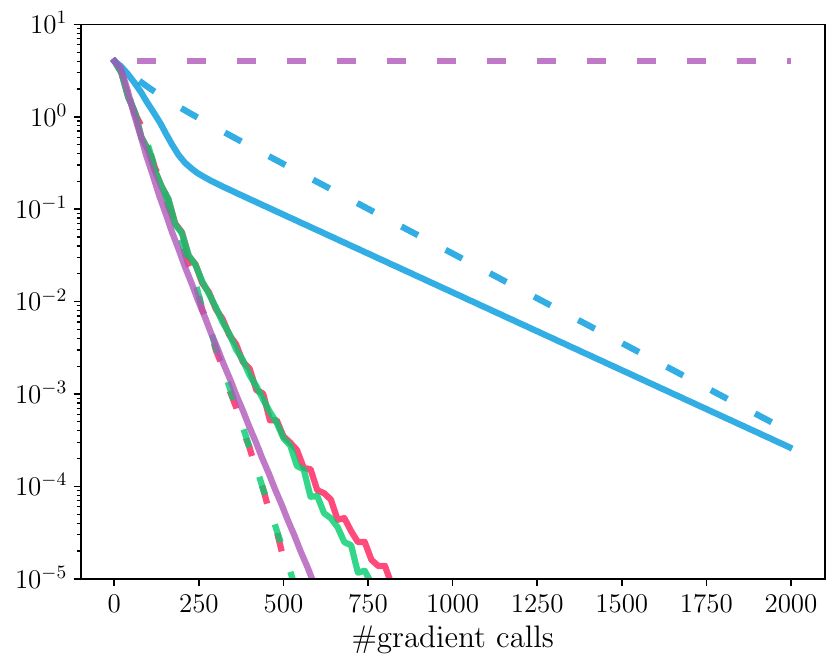}
      \caption{$r = 4 $}
    \end{subfigure}
    \begin{subfigure}[b]{0.24\textwidth}
      \centering
      \includegraphics[width=\textwidth]{./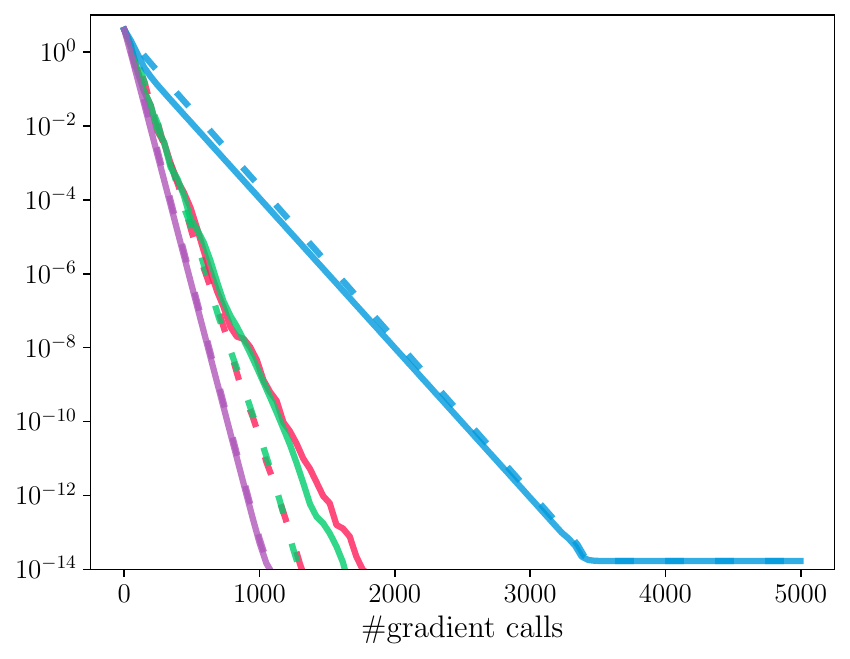}
      \caption{$r = 8 $}
    \end{subfigure}
    \caption{Comparison between the two families of non-nested and nested adaptive methods  on function $f(x,y)=-\frac{1}{2}y^2 + 2xy - 2x^2$
    with deterministic gradient oracles. $r=\eta^y/\eta^x$ is a pre-fixed learning rate ratio.}%
    \label{fig:1_2_func_determ_diff_ratio}
\end{figure}

 In this paper, we address these questions and make the following key contributions:
\begin{itemize}
    \item We investigate two generic frameworks for adaptive minimax optimization: one is a  simple (non-nested) adaptive framework, which performs one step of update of $x$ and $y$ simultaneously with adaptive gradients; the other is Nested Adaptive (NeAda) framework, which performs multiple updates of $y$  after one update of $x$, each with adaptive gradients. Both frameworks allow flexible choices of adaptive mechanisms such as Adam, AMSGrad and AdaGrad. We provide an example proving that the simple adaptive framework can fail to converge without setting an appropriate stepsize ratio; this applies to any of the adaptive mechanisms mentioned above, even in the noiseless setting. In contrast, the NeAda framework is less sensitive to the stepsize ratio, as numerically illustrated in \Cref{fig:1_2_func_determ_diff_ratio}. 

    \item We provide the convergence analysis for a representative of NeAda that uses  AdaGrad stepsizes for $x$ and a convergent adaptive optimizer for $y$, in terms of nonconvex-strongly-concave minimax problems. Notably, the convergence of this general scheme does not require to know any problem parameters and does not assume the bounded gradients.     We demonstrate that NeAda is able to achieve $\widetilde{O}(\epsilon^{-2})$ oracle complexity for the deterministic setting and $\widetilde{O}(\epsilon^{-4})$ for the stochastic setting to converge to $\epsilon$-stationary point, matching best known bounds. To the best of our knowledge, this seems to be the first adaptive framework
    for nonconvex minimax optimization that is provably convergent and parameter-agnostic.
    
    \item We further make two complementary contributions, which can be of independent interest.
    First, we propose a general AdaGrad-type stepsize for strongly-convex problems without knowing the strong convexity parameters, and derive a convergence rate comparable to SGD. It can serve as a subroutine for NeAda.  Second, we provide a high probability convergence result for the primal variable of NeAda under a subGaussian assumption.
    
    \item Finally, we numerically validate the robustness of the NeAda framework on several test functions compared to the non-nested adaptive framework, and demonstrate the effectiveness of the NeAda framework on distributionally robust optimization task with a real dataset. 
    
\end{itemize}

\subsection{Related work}

\paragraph{Adaptive algorithms. }
\citet{duchi2011adaptive} introduce AdaGrad for convex online learning and achieve $O(\sqrt{T})$ regrets.  \citet{li2019convergence} and \citet{ward2019adagrad} show an $\widetilde{O}(\epsilon^{-4})$ complexity for AdaGrad in the nonconvex stochastic optimization. There are an extensive number of works on AdaGrad-type methods; to list a few, \citep{levy2018online, antonakopoulos2021adaptive, kavis2019unixgrad, orabona2018scale}. Another family of algorithms uses more aggressive stepsizes of exponential moving average of the past gradients, such as Adam~\citep{kingma2015adam} and RMSProp~\citep{hinton2012neural}. \citet{reddi2018convergence} point out the non-convergence of Adam and provide a remedy with non-increasing stepsizes. There is a surge in the study of Adam-type algorithms due to their popularity in the deep neural network training~\citep{zaheer2018adaptive, chen2018convergence, liu2019variance}. Some work provides the convergence results for adaptive methods in the strongly-convex optimization~\citep{wang2019sadam, levy2017online, mukkamala2017variants}. Line search and stochastic line search are another effective strategy that can detect the objective's curvature and have received much attention~\citep{vaswani2019painless, vaswani2021towards, vaswani2020adaptive}. Notably, many adaptive algorithms are parameter-agnostic~\citep{duchi2011adaptive, reddi2018convergence, ward2019adagrad}.

\paragraph{Nonconvex minimax optimization. }
Stationary convergence of GDA in NC-SC setting was first provided by \citet{lin2020gradient}, showing $O(\epsilon^{-2})$ oracle complexity and $O(\epsilon^{-4})$ sample complexity with minibatch. Recently, \citet{chen2021closing} and~\citet{yang2021faster} achieve this sample complexity in the stochastic setting without minibatch. GDmax is a double loop algorithm that maximizes the dual variable to a certain accuracy. It achieves nearly the same complexity as GDA~\citep{nouiehed2019solving}. \citet{sebbouh2022randomized} recently discuss the relation between the two-time-scale and number of inner steps for GDmax. Very recently, \citet{li2022convergence} provide the necessary and sufficient time-scale separation for GDA to converge locally to Stackelberg equilibrium. Besides NC-SC setting, some work provides convergent algorithms when the objective is (non-strongly) concave about the dual variable\citep{zhang2020single, lu2020hybrid, yang2020catalyst}. Nonconvex-nonconcave regime is only explored under some special structure~\citep{liu2021first, diakonikolas2020halpern}, such as Polyak-\L{}ojasiewicz (PL) condition~\citep{fiez2021global, yang2020global}. All algorithms mentioned above require prior knowledge about problem parameters, such as smoothness modulus, strong concavity modulus, and noise variance.

\paragraph{Adaptive algorithms in minimax optimization. }
There exist many  adaptive and parameter-agnostic methods designed for convex-concave minimax optimization as a special case of monotone variational inequality~\citep{bach2019universal, antonakopoulos2019adaptive, antonakopoulos2021adaptive2, ene2020adaptive, stonyakin2018generalized, gasnikov2019adaptive, malitsky2020golden, diakonikolas2020halpern}. Most of them combine extragradient method,  mirror prox~\citep{nemirovski2004prox} or the like, with AdaGrad mechanism. \citet{liu2019towards} and \citet{dou2021one} relax convexity-concavity assumption to the regime where  Minty variational inequality (MVI) has a solution. In these settings,  time-scale separation of learning rates is not required even for non-adaptive algorithms. For nonconvex-strongly-concave problems, \citet{huang2021adagda, huang2021efficient, guo2021novel} propose adaptive methods, which set the learning rates based on knowledge about smoothness and strong-concavity modulus and the bounds for adaptive stepsizes. 

\section{Non-nested and nested adaptive methods} \label{sec::nonconvergence}

In this section, we investigate two generic frameworks that can incorporate most existing adaptive methods into minimax optimization. We remark that many variants  encapsulated in these two families are already widely used in practice, such as training of GAN~\citep{goodfellow2016nips}, distributionally robust optimization~\citep{DBLP:conf/iclr/SinhaND18}, etc. These two frameworks, coined as non-nested and nested adaptive methods, can be viewed as adaptive counterparts of GDA and GDmax. We aim to illustrate the difference between these two adaptive families, even though GDA and GDmax are often considered ``twins''.

\paragraph{Non-nested adaptive methods. } In Algorithm \ref{alg::adagda}, non-nested methods update the primal and dual variables in a symmetric way. Weighted gradients $m_t^x$ and $m_t^y$ are the moving average of the past stochastic gradients with the momentum parameters $\beta^x$ and $\beta^y$. The effective stepsizes of $x$ and $y$ are $\eta^x/\sqrt{v_t^x}$ and $\eta^y/\sqrt{v_t^y}$, where the division is taken coordinate-wise. We refer to $\eta^x$ and $\eta^y$ as learning rates, and  $v_t^x, v_t^y$ are some average of squared-past gradients through function $\psi$. Many popular choices of adaptive stepsizes are captured in this framework, see also \citep{reddi2018convergence}:

 \begingroup
 \setlength{\abovedisplayskip}{-10pt}
 \setlength{\belowdisplayskip}{-10pt}
 \setlength{\abovedisplayshortskip}{-10pt}
 \setlength{\belowdisplayshortskip}{-10pt}
 \addtolength{\jot}{-0.7em}
 \begin{align*}
       &\text{(GDA)} \quad \beta = 0; \  \ \psi\left(v_0, \{g^2_i\}_{i=0}^{t}\right) = 1, \quad
       \text{(AdaGrad)} \quad \beta = 0; \  \ \psi\left(v_0, \{g^2_i\}_{i=0}^{t}\right) = v_0 + \sum_{i=0}^{t} g_i^2, \\
       &\text{(Adam)} \quad \psi\left(v_0, \{g^2_i\}_{i=0}^{t}\right) = \gamma^{t+1}v_0 + (1-\gamma)\sum_{i=0}^t\gamma^{t-i}g_i^2, \\
       &\text{(AMSGrad)} \quad \psi\left(v_0, \{g^2_i\}_{i=0}^{t}\right) = \max_{m=0,\dots, t} \gamma^{m+1}v_0 + (1-\gamma)\sum_{i=0}^m\gamma^{m-i}g_i^2. \\
 \end{align*}
 \endgroup

\begin{floatalgo}
\begin{minipage}[H]{0.46\textwidth}
\begin{algorithm}[H] 
\setstretch{1.27}
    \caption{Non-nested Adaptive Method}
    \begin{algorithmic}[1]
        \STATE Input: $x_0$ and $y_0$
        \FOR{$t = 0,1,2,...$}
            \STATE sample $\xi_t$ and let \\ $g_t^x = \nabla_x F(x_t, y_t; \xi_t)$ and \\ $g_t^y = \nabla_y F(x_t, y_t; \xi_t)$ 
            \STATE \texttt{// update the first moment}
            \\ $m_{t+1}^x = \beta^xm^x_{t} + (1-\beta^x)g_t^x$ and \\ $ m_{t+1}^y = \beta^y m^y_{t} + (1-\beta^y)g_t^y$
            \STATE \texttt{// update the second moment}
            \\ $v_{t+1}^x = \psi\left(v_0^x, \{(g_i^x)^2\}_{i=0}^{t}\right)$ and \\ $v_{t+1}^y = \psi\left(v_0^y, \{(g_i^y)^2\}_{i=0}^{t}\right)$
            \STATE \texttt{// update variables}
            \\ $x_{t+1} = x_t - \frac{\eta^x}{\sqrt{v_{t+1}^x}} m_{t+1}^x$ and \\ $ y_{t+1} = y_t + \frac{\eta^y}{\sqrt{v_{t+1}^y}} m_{t+1}^y$
        \ENDFOR
    \end{algorithmic} \label{alg::adagda}
\end{algorithm}
\end{minipage}
\hfill
\begin{minipage}[H]{0.52\textwidth}
\begin{algorithm}[H] 
\setstretch{1.28}
    \caption{Nested Adaptive (NeAda) Method}
    \begin{algorithmic}[1]
        \STATE Input: $x_0$ and $y_0^0$
        \FOR{$t = 0,1,2,...$}
        \FOR{$k = 0,1,2,...$ until a stopping criterion is satisfied}
            \STATE sample $\hat{\xi}_t^k$ and  $g_{t,k}^y = \nabla_y F(x_t, y_t^k; \hat{\xi}_t^k)$
            \STATE $ m_{t,k+1}^y = \beta^y m^y_{t,k} + (1-\beta^y)g_{t,k}^y$
            \STATE $ v_{t,k+1}^y = \psi^y\left(v_{t,0}^y, \{(g_{t, i}^y)^2\}_{i=0}^{k} \right)$
            \STATE $ y_{t}^{k+1} = y_t^k + \frac{\eta^y}{\sqrt{v_{t,k+1}^y}} m_{t,k+1}^y$
          \ENDFOR
            \STATE $v_{t+1, 0}^y = v_{t,k+1}^y$ and $m_{t+1, 0}^y = m^y_{t,k+1}$ 
            \STATE sample $\xi_t$ and  $g_t^x = \nabla_x F(x_t, y_t^{k+1}; \xi_t)$
            \STATE $m_{t+1}^x = \beta^x m^x_{t} + (1-\beta^x)g_t^x$ 
            \STATE $v_{t+1}^x = \psi^x\left(v_0^x, \{(g_i^x)^2\}_{i=0}^{t}\right)$
            \STATE $x_{t+1} = x_t - \frac{\eta^x}{\sqrt{v_{t+1}^x}} m_{t+1}^x$
        \ENDFOR
    \end{algorithmic} \label{alg::neada}
\end{algorithm}
\end{minipage}
\end{floatalgo}

\paragraph{Nested adaptive (NeAda) methods. } NeAda, presented
in Algorithm \ref{alg::neada}, has a nesting inner loop to maximize $y$ until some stopping criterion is reached (see details in Section~\ref{sec::convergence}).   Instead of using a fixed number of inner iterations or a fixed target accuracy as in GDmax~\citep{lin2020gradient, nouiehed2019solving}, NeAda gradually increases the accuracy of the inner loop as the outer loop proceeds to make it fully adaptive.

We refer to the ratio between two learning rates, i.e. $\eta^y/\eta^x$, as the two-time-scale. The current analysis of GDA in nonconvex-strongly-concave setting requires two-time-scale to be proportional with the condition number $\kappa = l/\mu$, where $l$ and $\mu$ are Lipschitz smoothness and strongly-concavity modulus~\citep{lin2020gradient, yang2021faster}. We provide an example showing that the problem-dependent two-time-scale is \textit{necessary} for GDA and most non-nested methods even in the deterministic setting.

\begin{lemma}
\label{lemma:nonconverge}
Consider the function  $f(x, y) = -\frac{1}{2}y^2 + Lxy - \frac{L^2}{2}x^2$ in the deterministic setting. Let $r \eta^x = \eta^y$. (1) GDA will not converge to the stationary point when $ r \leq L^2$:
\begin{equation*}
    \nabla_x f(x_T, y_T) = \nabla_x f(x_0, y_0) \prod_{t=0}^{T-1}\left[ 1 + \eta^x(L^2 - r) \right].
\end{equation*}
(2) Assume the averaging function $\psi^x$ and $\psi^y$ are the same, and satisfy that for any $\tau$, if $v_t^x = \tau v_t^y$ and $(g_t^x)^2 = \tau (g_t^y)^2$ then $v_{t+1}^x = \tau v_{t+1}^y$.  With $\beta^x = \beta^y$,  $v_0^x = v_0^y = 0$ and $m_0^x = m_0^y = 0$ (which are commonly used in practice), non-nested adaptive method will not converge when $r \leq L$:
\begin{equation*}
    \nabla_x f(x_T, y_T) \geq \nabla_x f(x_0, y_0) \prod_{t=0}^{T-1}\left[ 1 + \frac{L\eta^x}{\sqrt{v_t^x}}(1 - \beta^x)(L - r) \right].
\end{equation*}
When $r = L$, $\nabla_x f(x_t, y_t) = \nabla_x f(x_0, y_0)$ for all $t$.
\end{lemma}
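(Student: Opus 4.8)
The plan hinges on a single structural observation: for this quadratic the two partial gradients are always collinear. Indeed $\nabla_x f(x,y) = Ly - L^2 x = -L(-y+Lx) = -L\,\nabla_y f(x,y)$, so if we write $a_t := \nabla_x f(x_t,y_t)$ then $\nabla_y f(x_t,y_t) = -a_t/L$, and in the deterministic setting the sampled gradients satisfy $g^x_t = a_t$, $g^y_t = -a_t/L$, hence $(g^x_t)^2 = L^2 (g^y_t)^2$ at every step. (We may assume $L>0$; otherwise replace $x$ by $-x$.) Thus everything reduces to tracking the scalar sequence $a_t$.

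For part (1), I would substitute the GDA updates $x_{t+1} = x_t - \eta^x a_t$ and $y_{t+1} = y_t + \eta^y(-a_t/L)$ into $a_{t+1} = Ly_{t+1} - L^2 x_{t+1}$. Using $\eta^y = r\eta^x$ the right-hand side collapses to $(1 - \eta^y + L^2\eta^x)a_t = (1 + \eta^x(L^2-r))a_t$, and iterating over $t = 0,\dots,T-1$ gives the displayed identity. When $r \le L^2$ every factor is $\ge 1$, so $a_T$ keeps the sign of $a_0$ and $|a_T| \ge |a_0|$; in particular it does not go to zero, so GDA does not reach a stationary point when $a_0 \neq 0$.

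For part (2), I would first prove two invariants by induction on $t$: $v^x_t = L^2 v^y_t$ and $m^x_t = -L\,m^y_t$. Both hold at $t=0$ by the zero initialization. For the inductive step, since $\psi^x=\psi^y$ and $(g^x_t)^2 = L^2(g^y_t)^2$, the hypothesized scaling property of $\psi$ applied with $\tau = L^2$ yields $v^x_{t+1} = L^2 v^y_{t+1}$; and since $\beta^x=\beta^y$ and $g^x_t = -L g^y_t$, the moving-average recursion yields $m^x_{t+1} = -L\, m^y_{t+1}$. Plugging $\sqrt{v^y_{t+1}} = \sqrt{v^x_{t+1}}/L$ and $m^y_{t+1} = -m^x_{t+1}/L$ into $a_{t+1} = a_t + \frac{L\eta^y}{\sqrt{v^y_{t+1}}}m^y_{t+1} + \frac{L^2\eta^x}{\sqrt{v^x_{t+1}}}m^x_{t+1}$, the two correction terms combine into $a_{t+1} = a_t + \frac{L\eta^x(L-r)}{\sqrt{v^x_{t+1}}}\,m^x_{t+1}$. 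Finally, taking without loss of generality $a_0 > 0$, a second induction shows $a_t > 0$ and $m^x_t \ge 0$ for all $t$: indeed $m^x_{t+1} = \beta^x m^x_t + (1-\beta^x) a_t \ge (1-\beta^x)a_t > 0$, and since $L - r \ge 0$ the correction term is nonnegative, so $a_{t+1} \ge a_t\left(1 + \frac{L\eta^x(1-\beta^x)(L-r)}{\sqrt{v^x_{t+1}}}\right) \ge a_t$. This closes the induction and, unrolled over $t$, gives the claimed lower bound (the $v^x_t$ in the product being the second-moment term used at step $t$). When $r = L$ the factor $L - r$ vanishes, so the recursion becomes exactly $a_{t+1} = a_t$, i.e. $\nabla_x f(x_t,y_t) = \nabla_x f(x_0,y_0)$ for all $t$.

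The algebra in each recursion is routine once collinearity is noticed; the part requiring care is the bookkeeping in part (2). The first subtlety is verifying that the stated hypotheses on $\psi$ and the momentum parameters genuinely force the $x$- and $y$-trajectories to remain exact rescalings of one another, and checking that AdaGrad, Adam and AMSGrad all meet the scaling property (so the single example rules out all of them at once). The second is the sign/monotonicity argument bounding $m^x_{t+1}$ from below by $(1-\beta^x)a_t$, which is what makes every per-step factor at least $1$; here one uses $\beta^x \in [0,1)$ and the already-established positivity of $a_t$. One should also note that $a_0 \ne 0$ is precisely what keeps $\sqrt{v^x_1}$ away from zero in the AdaGrad case with $v_0 = 0$, so the updates are well defined.
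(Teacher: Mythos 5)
Your proof is correct and follows essentially the same route as the paper: exploit the collinearity $\nabla_x f=-L\nabla_y f$ (so $(g^x_t)^2=L^2(g^y_t)^2$), propagate the invariant $v^x_t=L^2 v^y_t$ via the assumption on $\psi$, and unroll the resulting scalar recursion for $\nabla_x f(x_t,y_t)$, dropping the nonnegative momentum history to get the per-step factor $1+\frac{L\eta^x}{\sqrt{v^x_t}}(1-\beta^x)(L-r)$. The only cosmetic difference is that you carry the invariant $m^x_t=-L\,m^y_t$ (and make the WLOG sign handling explicit), whereas the paper equivalently tracks the combined quantity $m^x_t+r\,m^y_t$ through its own recursion; both give the same bound.
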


\begin{remark}
Most popular adaptive stepsizes we mentioned before, such as Adam, AMSGrad and AdaGrad, have averaging functions satisfying the assumption in the lemma. 
Any point on the line $y = Lx$ is a stationary point for the above function,  and the distance from a point to this line is proportional to its gradient norm, so the divergence in gradient norm will also implies that of iterates. In the proof, we will also show that the averaged or best iterate will still diverge under the same condition.  
The lemma implies that for any given time-scale $r$, there exists a problem for which the non-nested algorithm does not converge to the stationary point, so they are not parameter-agnostic. 
\end{remark}

We compare non-nested and nested methods combined with different stepsizes schemes: Adam, AMSGrad, AdaGrad and fixed stepsize, on the function: $-\frac{1}{2}y^2 + 2xy - 2x^2$. In the experiments of this section, we halt the inner loop when the (stochastic) gradient about $y$ is smaller than $1/t$ or the number iteration is greater than $t$. We observe from Figure \ref{fig:1_2_func_determ_diff_ratio} that the thresholds for the non-convergence of non-nested methods ($r=2$ for adaptive methods and $r=4$ for GDA) are exactly as predicted by the lemma. Although the adaptive methods admit a smaller two-time-scale threshold than GDA in this example, it is not a universal phenomenon from our experiments in Section \ref{sec:experiments}. Interestingly, nested adaptive methods are robust to different two-time-scales and always have the trend to converge to the stationary point.

\section{Convergence Analysis of NeAda-AdaGrad} \label{sec::convergence}

In this section, we reveal the secret behind the robust performance of NeAda by providing the
convergence guarantee for a representative member in the family. For sake of simplicity and clarity, we mainly focus on NeAda with AdaGrad.  Adam-type mechanism can suffer from non-convergence already for nonconvex minimization despite its good performance in practice. Our result also sheds light on the analysis of other more sophisticated members such as AMSGrad in the family. 

\paragraph{NeAda-AdaGrad:} Presented in \Cref{alg::neada-adagrad}, NeAda-AdaGrad adopts the scalar AdaGrad scheme for the $x$-update in the outer loop and uses mini-batch in the stochastic setting. For the inner loop for maximizing $y$, we run some adaptive algorithm for maximizing $y$ until some easily checkable stopping criterion is satisfied. We suggest two criteria here: at $t$-th outer loop: (I) the squard gradient mapping norm about $y$ is smaller than $1/(t+1)$ in the deterministic setting, (II) the number of inner loop iterations reaches $t + 1$ in the stochastic setting.

\begin{algorithm}[ht] 
    \caption{NeAda-AdaGrad}
    \setstretch{1.25}
    \begin{algorithmic}[1]
        \STATE Input: $(x_0, y_{-1})$, $v_0 > 0$, $\eta > 0$.
        \FOR{$t = 0,1,2,..., T-1$}
            \STATE from $y_{t-1}$ run an adaptive algorithm $\mathcal{A}$ for maximizing $f(x_t, \cdot)$ to obtain $y_{t}$ \\
            (a) stopping criterion I (deterministic): stop when $\|y_t - \text{Proj}_{\mathcal{Y}}(y_t + \nabla_y f(x_t, y_t))\|^2 \leq \frac{1}{t+1}$  \\
            (b) stopping criterion II (stochastic): stop after $t+1$ inner loop iterations.
            \vspace{1mm}
            \STATE $v_{t+1} = v_{t} + \left\|\frac{1}{M}\sum_{i=1}^{M}\nabla_x F(x_t, y_t; \xi^i_t)\right\|^2$ where $\{ \xi^i_t\}_{i=1}^{M}$ are i.i.d samples \label{b sample}
            \STATE $x_{t+1} = x_t - \frac{\eta}{\sqrt{v_{t+1}}} \left( \frac{1}{M}\sum_{i=1}^{M}\nabla_x F(x_t, y_t; \xi^i_t)\right)$ 
            \label{gradient sample}
        \ENDFOR
    \end{algorithmic} \label{alg::neada-adagrad}
\end{algorithm}

For the purpose of theoretical analysis, we mainly focus on the minimax problem of the form (\ref{eq:main_problems}) under the nonconvex-strongly-concave (NC-SC) setting\footnote{Note that for other nonconvex minimax optimization beyond the NC-SC setting,  even the convergence of non-adaptive gradient methods has not been fully understood.}, formally stated in the following assumptions. 

\begin{assume} [Lipschitz smoothness]
There exists a positive constant $l>0$ such that
{\small
\begin{align*}
     \max\big\{\left\| \nabla _ { x } f \left( x _ { 1 } , y _ {1} \right) - \nabla _ { x } f \left( x _ { 2 } , y _ {2} \right) \right\|, \left\| \nabla _ { y } f \left( x _{1} , y _ { 1 } \right) - \nabla _ { y } f \left( x_ {2} , y _ { 2 } \right) \right\| \big \} \leq l [\left\| x _ { 1 } - x _ { 2 } \right\| + \left\| y _ { 1 } - y _ { 2 } \right\|],
\end{align*} \label{assum:smooth}
}%
holds for all $x_1, x_2\in \mathbb{R}^{d}, y_1$, $y_2 \in \mathcal{Y}$.
\end{assume}

\begin{assume} [Strong-concavity in $y$]
\label{assum:sc}
 There exists $\mu > 0$ such that: $
  f(x, y_1) \geq f(x,y_2) + \langle \nabla_y f(x, y_1), y_1 - y_2\rangle + \frac{\mu}{2}\|y_1 - y_2\|^2, \forall x \in \mathbb{R}^d, y_1, y_2 \in \mathcal{Y}.
$
\end{assume}

For simplicity of notation, define $\kappa = l/\mu$ as the condition number, $\Phi(x) = \max_{y \in \mathcal{Y}} f(x, y)$ as the primal function, and $y^*(x) = \argmax_{\mathcal{Y}}f(x, y)$ as the optimal $y$ w.r.t $x$. Since the objective is nonconvex about $x$, we aim at finding an $\epsilon$-stationary point $(x_t, y_t)$ such that $\mathbb{E}\|\nabla_x f(x_t, y_t)\| \leq \epsilon$ and $\mathbb{E}\|y_t - y^*(x_t)\| \leq \epsilon$, where the expectation is taken over the randomness in the algorithm.

\subsection{Convergence in deterministic and stochastic settings}

\begin{assume}[Stochastic gradients]
\label{assum:stoc_grad}  
$\nabla_x F(x,y; \xi)$ and $\nabla_y F(x,y; \xi)$ are unbiased stochastic estimators of $\nabla_x f(x, y)$ and $\nabla_y f(x, y)$ and have variances bounded by $\sigma^2 \geq 0$.  
\end{assume}

We assume the unbiased stochastic gradients have the variance $\sigma^2$, and the problem reduces to the deterministic setting when $\sigma = 0$. Now we provide a general analysis of the convergence for any adaptive optimizer used in the inner loop. 
\begin{theorem} 
\label{lemma stoc}
 Define the expected cumulative suboptimality of inner loops as $\mathcal{E} = \mathbb{E}\left[ \sum_{t=0}^{T-1} \frac{l^2\|y_t - y^*(x_t)\|^2 }{2\sqrt{v_0}}\right]$. Under Assumptions \ref{assum:smooth}, \ref{assum:sc} and \ref{assum:stoc_grad}, the output from Algorithm \ref{alg::neada-adagrad} satisfies 
\begin{equation*}
    \mathbb{E}\left[\sqrt{\frac{1}{T}\sum_{t=0}^{T-1} \|\nabla_x f(x_t, y_t)\|^2} \right] \leq \frac{2(A + \mathcal{E})}{\sqrt{T}} + \frac{v_0^{\frac{1}{4}}\sqrt{A + \mathcal{E}}}{\sqrt{T}} + \frac{2\sqrt{(A + \mathcal{E})\sigma}}{(M T)^{\frac{1}{4}}},
\end{equation*}
where 
$A = \frac{2\Delta}{\eta} + \left( \frac{4\sigma}{\sqrt{M }} + 2\kappa l \eta\right) \left[ 1 + 2\log \left( \mathrm{Poly} \left(T, \mathcal{E}, \frac{\Delta}{\eta}, \frac{\sigma}{\sqrt{M }}, \kappa l\eta, v_0, \frac{1}{v_0} \right) \right) \right]$.
\end{theorem}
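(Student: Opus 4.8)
The plan is to follow the classical AdaGrad descent analysis on the primal function $\Phi(x)$, treating the inexactness of the inner maximization as a perturbation that is bookkept through the quantity $\mathcal{E}$. First I would invoke the standard facts about the NC-SC structure: under Assumptions \ref{assum:smooth} and \ref{assum:sc}, $\Phi$ is $L_\Phi$-smooth with $L_\Phi = O(\kappa l)$, $y^*(\cdot)$ is $\kappa$-Lipschitz, and $\|\nabla\Phi(x_t) - \nabla_x f(x_t,y_t)\| \le l\|y_t - y^*(x_t)\|$. Writing $\Delta_t := \frac{1}{M}\sum_i \nabla_x F(x_t,y_t;\xi_t^i)$ for the mini-batch gradient used in the update, I would apply $L_\Phi$-smoothness along the step $x_{t+1} = x_t - \tfrac{\eta}{\sqrt{v_{t+1}}}\Delta_t$, producing the one-step inequality
\begin{equation*}
\Phi(x_{t+1}) \le \Phi(x_t) - \frac{\eta}{\sqrt{v_{t+1}}}\langle \nabla\Phi(x_t), \Delta_t\rangle + \frac{L_\Phi \eta^2}{2 v_{t+1}}\|\Delta_t\|^2 .
\end{equation*}
The inner product term is split using $\nabla\Phi(x_t) = \nabla_x f(x_t,y_t) + (\nabla\Phi(x_t) - \nabla_x f(x_t,y_t))$ and, after taking conditional expectation, $\mathbb{E}\langle \nabla_x f(x_t,y_t), \Delta_t\rangle = \|\nabla_x f(x_t,y_t)\|^2$; the cross term with the $y$-error contributes a $\frac{l^2}{2}\|y_t-y^*(x_t)\|^2$ piece (via Young's inequality), which is exactly what $\mathcal{E}$ collects, and the $\frac{L_\Phi\eta^2}{2v_{t+1}}\|\Delta_t\|^2$ term telescopes against $\sum_t \|\Delta_t\|^2/v_{t+1} = \sum_t (v_{t+1}-v_t)/v_{t+1} \le \log(v_T/v_0)$, a hallmark AdaGrad bound giving the $\widetilde{O}(\kappa l\eta)$ term in $A$.

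The main obstacle — and the step requiring the most care — is handling the adaptive denominator $\sqrt{v_{t+1}}$ inside the negative term $-\frac{\eta}{\sqrt{v_{t+1}}}\|\nabla_x f(x_t,y_t)\|^2$, since $v_{t+1}$ is random and correlated with the numerator, so one cannot naively pull it out of the expectation. The standard remedy is to divide through by $\sqrt{v_T}$ (or $\sqrt{v_{t+1}}$) and use $v_{t+1} = v_t + \|\Delta_t\|^2$ together with a Cauchy–Schwarz / self-bounding argument: one shows $\sum_t \|\nabla_x f(x_t,y_t)\|^2 \le \sqrt{v_T}\cdot(\text{bounded quantity})$, then bounds $\sqrt{v_T} = \sqrt{v_0 + \sum_t\|\Delta_t\|^2}$ by $\sqrt{v_0} + \|\nabla\Phi\text{-type terms}\| + \sqrt{\sigma^2 T/M}$, and finally solves the resulting quadratic inequality in $\sqrt{\sum_t\|\nabla_x f(x_t,y_t)\|^2}$. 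This is where the $\frac{2\sqrt{(A+\mathcal{E})\sigma}}{(MT)^{1/4}}$ and $\frac{v_0^{1/4}\sqrt{A+\mathcal{E}}}{\sqrt T}$ terms emerge — the former from the variance contribution to $\sqrt{v_T}$ after a mini-batch reduces variance by $M$, the latter from the $\sqrt{v_0}$ additive constant. The mini-batch noise terms also require the variance bound from Assumption \ref{assum:stoc_grad} and $\mathbb{E}\|\Delta_t - \nabla_x f(x_t,y_t)\|^2 \le \sigma^2/M$.

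To assemble the final bound, I would sum the one-step inequality over $t=0,\dots,T-1$, take total expectation, rearrange so that $\mathbb{E}\sum_t \frac{\eta}{\sqrt{v_{t+1}}}\|\nabla_x f(x_t,y_t)\|^2$ is on the left and $\Phi(x_0) - \min_x\Phi$, the telescoped log term, the variance terms, and $\mathcal{E}$ are on the right — this defines $A$. Then applying Cauchy–Schwarz as $\mathbb{E}\sqrt{\tfrac1T\sum_t\|\nabla_x f\|^2} \le \big(\mathbb{E}\sum_t \tfrac{1}{\sqrt{v_{t+1}}}\|\nabla_x f\|^2\big)^{1/2}\big(\tfrac1T\mathbb{E}\sum_t\sqrt{v_{t+1}}\big)^{1/2}$ (a Hölder-type split tailored to AdaGrad), bounding $\mathbb{E}\sqrt{v_T}$ as above, and resolving the induced self-referential inequality yields the three stated terms. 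I expect the bookkeeping of which constants get absorbed into the $\widetilde{O}$ in $A$ (the $\log(v_T/v_0)$ factor, in particular) to be the most tedious but not conceptually hard part; the genuinely delicate point remains decoupling the adaptive stepsize from the gradient norm in expectation.
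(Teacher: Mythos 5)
Your overall architecture does parallel the paper's proof (smoothness of $\Phi$ with modulus $O(\kappa l)$, the AdaGrad log-sum lemma for $\sum_t \|\cdot\|^2/v_{t+1}$, folding $l^2\|y_t-y^*(x_t)\|^2$ into $\mathcal{E}$, and a final H\"older-plus-quadratic resolution in $\mathbb{E}\big[\sqrt{\sum_t\|\nabla_x f(x_t,y_t)\|^2}\big]$), but there is a genuine gap at the step you dispatch with ``after taking conditional expectation, $\mathbb{E}\langle \nabla_x f(x_t,y_t),\Delta_t\rangle = \|\nabla_x f(x_t,y_t)\|^2$.'' In the descent inequality that inner product is multiplied by the random factor $\eta/\sqrt{v_{t+1}}$, and $v_{t+1}=v_t+\|\Delta_t\|^2$ depends on the very same sample $\xi_t$; hence $\mathbb{E}_{\xi_t}\big[\tfrac{1}{\sqrt{v_{t+1}}}\langle\nabla\Phi(x_t),\Delta_t\rangle\big]$ is not an unbiased surrogate for $\tfrac{1}{\sqrt{v_{t+1}}}\langle\nabla\Phi(x_t),\nabla_x f(x_t,y_t)\rangle$, and the bias can be of the same order as the descent you are trying to extract. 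The remedy you then sketch (divide by $\sqrt{v_T}$, use $\sum_t\|\nabla_x f(x_t,y_t)\|^2\le \sqrt{v_T}\cdot(\text{bounded quantity})$, solve a self-bounding quadratic) addresses a different difficulty --- converting the weighted sum $\sum_t\|\nabla_x f(x_t,y_t)\|^2/\sqrt{v_{t+1}}$ into the unweighted one --- and does nothing about this bias. Nor can you fall back on the bounded-gradient argument of Ward et al., since the theorem is stated and proved without any gradient bound (only the variance bound of Assumption 3.3).

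The paper closes exactly this hole with a decorrelated ``proxy'' stepsize: it compares $1/\sqrt{v_{t+1}}$ with $1/\sqrt{v_t+\|\nabla_x f(x_t,y_t)\|^2+\sigma^2/M}$, whose denominator is deterministic conditioned on the past, so the cross term with the noise vanishes in conditional expectation; the difference of the two stepsizes is bounded by $\big|\,\|\Delta_t\|-\|\nabla_x f(x_t,y_t)\|\,\big|\big/\big(\sqrt{v_{t+1}}\sqrt{v_t+\|\nabla_x f(x_t,y_t)\|^2+\sigma^2/M}\big)$ and absorbed via Young's inequality with the specific choice $\lambda=\tfrac{\sigma^2/M}{\sqrt{v_t+\|\nabla_x f(x_t,y_t)\|^2+\sigma^2/M}}$, yielding $\tfrac{\|\nabla\Phi(x_t)\|^2}{4\sqrt{\cdot}}+\tfrac{\sigma}{\sqrt M}\tfrac{\|\Delta_t\|^2}{v_{t+1}}$. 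This is precisely where the $\tfrac{\sigma}{\sqrt M}$ coefficient multiplying the AdaGrad logarithm inside $A$ comes from, and it is also why the proxy carries the extra $\sigma^2/M$ so that the surviving negative term is $-\tfrac{\|\nabla_x f(x_t,y_t)\|^2}{2\sqrt{v_t+\|\nabla_x f(x_t,y_t)\|^2+\sigma^2/M}}$; from there the $\mathcal{E}$, $v_0^{1/4}$ and $(MT)^{-1/4}$ terms fall out as you anticipate, via H\"older's inequality and Proposition~\ref{prop log bound} to untangle the $\log$ of $\mathbb{E}[\sqrt Z]$ appearing on both sides. Without this proxy-stepsize device (or an equivalent substitute), your plan as written does not yield the stated bound.
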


\begin{remark}
  The general analysis is built upon milder assumptions than existing work on AdaGrad in nonconvex optimization, not requiring either bounded gradient in~\citep{ward2019adagrad} or prior knowledge about the smoothness modulus in~\citep{li2019convergence}. This theorem implies the algorithm attains convergence for the nonconvex variable $x$ with any constant $\eta > 0$ and $v_0 > 0$ that does not depend on any problem parameter, so it is parameter-agnostic. 
\end{remark}

\begin{remark}
Another benefit of this analysis is that the variance $\sigma$ appears in the leading term $T^{-\frac{1}{4}}$, which means the convergence rate can interpolate between the deterministic and stochastic settings. It implies a complexity of $\widetilde{O}(\epsilon^{-2})$ in the deterministic setting and $\widetilde{O}(\epsilon^{-4})$ in the stochastic setting for the primal variable as long as the accumulated suboptimality for the inner-loops $\mathcal{E}$ is $\widetilde{\mathcal{O}}(1)$, regardless of the batch size $M$.
However, $M$ can control the number of outer loops and there affect the sample complexity for the dual variable. 
\end{remark}

In the next two theorems,  we derive the total complexities, in the deterministic and stochastic settings, of finding $\epsilon$-stationary point by controlling the cumulative suboptimality $\mathcal{E}$ in \Cref{lemma stoc} for subroutine $\mathcal{A}$ with specific convergence rate.   In fact, we can also use any off-the-shelf adaptive optimizer for solving the inner maximization problem up to the desired accuracy. 
Note that (stochastic) GDmax fixes each inner-loop's accuracy or steps to be related with $\mu$, $\ell$ and $\epsilon$ so that $\mathcal{E}$ can be easily bounded \citep{lin2020gradient, nouiehed2019solving}. In contrast, since we do not have access to the problem parameters and $\epsilon$, Algorithm \ref{alg::neada-adagrad} gradually increases the inner-loop accuracy. In the proof of the following theorems, we will show that with our proposed stopping criteria and desired subroutines, $\mathcal{E}$ is bounded by $\mathcal{O}(\log T)$.

\begin{theorem} [deterministic] 
\label{thm:deter}
Suppose we have a linearly-convergent subroutine $\mathcal{A}$ for maximizing any strongly concave function $h(\cdot)$:
$$\|y^k - y^*\|^2 \leq a_1(1-a_2)^k\|y^0 - y^*\|^2$$
where $y^k$ is $k$-th iterate, $y^*$ is the optimal solution, and $a_1>0$ and $0 < a_2 <1$ are constants that can depend on the parameters of $h$. 
Under the same setting as  \Cref{lemma stoc} with $\sigma = 0$,  
for Algorithm~\ref{alg::neada-adagrad} with  $M = 1$ and a subroutine $\mathcal{A}$ under stopping criterion I,  there exists $t^* \leq  \widetilde{O}\left(\epsilon^{-2} \right)$ such that 
$(x_{t^*}, y_{t^*})$ is an $\epsilon$-stationary point.
Therefore, the total gradient complexity is $\widetilde{O}\left( \epsilon^{-2} \right)$.
\end{theorem}

\begin{remark}
  This complexity is optimal in $\epsilon$ up to logarithmic term~\citep{zhang2021complexity}, similar to GDA~\citep{lin2020gradient}. Note that many adaptive and parameter-agnostic algorithms  can achieve the linear rate when solving smooth and strongly concave maximization problems; to list a few, gradient ascent with backtracking line-search~\citep{vaswani2019painless}, SC-AdaNGD~\citep{levy2017online} 
  and polyak stepsize~\citep{hazan2019revisiting,loizou2021stochastic,orvieto2022dynamics} \footnote{\citet{levy2017online} needs to know the diameter of $\mathcal{Y}$. 
  \citet{hazan2019revisiting,loizou2021stochastic,orvieto2022dynamics} use polyak stepsize which
  requires knowledge of the minimum or lower bound of the function value.
   AdaGrad achieves the linear  rate if the learning rate is smaller than $O(1/l)$, and $O(1/k)$ rate otherwise \citep{xie2020linear}.
  }. Here we can also pick more general subproblem accuracy in criterion I that only needs to scale with $1/t$. 
\end{remark}

\begin{theorem} [stochastic] 
\label{thm:stoc}
Suppose we have a sub-linearly-convergent subroutine $\mathcal{A}$ for maximizing any strongly concave function $h(\cdot)$: after $K = k+1$ iterations
$$   \mathbb{E} \|y^K - y^*\|^2 \leq \frac{b_1\|y^0 - y^*\|^2 + b_2}{k},$$
where $y^k$ is $k$-th iterate, $y^*$ is the optimal solution, and $b_1, b_2>0$ are constants that can depend on the parameters of $h$. 
Under the same setting as  \Cref{lemma stoc},  for Algorithm \ref{alg::neada-adagrad} with $M = \epsilon^{-2}$ and subroutine $\mathcal{A}$ under the stopping criterion II,  there exists $t^* \leq \widetilde{O}\left(\epsilon^{-2} \right)$ such that 
$(x_{t^*}, y_{t^*})$ is an $\epsilon$-stationary point.
Therefore, the total stochastic gradient complexity is 
$\widetilde{O}\left(\epsilon^{-4} \right).$
\end{theorem}

\begin{remark}
  This $\widetilde{O}\left(\epsilon^{-4} \right)$ complexity is nearly optimal in the dependence of $\epsilon$ for stochastic NC-SC problems~\citep{li2021complexity}. Here we set $M = \epsilon^{-2}$ for the simplicity of exposition, and a similar result also holds for gradually increasing $M$. The sublinear rate specified above for solving the stochastic strongly convex subproblem can be achieved by several existing parameter-agnostic algorithms under some additional assumptions, such as
  \textsc{FreeRexMomentum}~\citep{NIPS2017_6aed000a} 
  and Coin-Betting~\citep{cutkosky2018black}\footnote{
\textsc{FreeRexMomentum}~\citep{NIPS2017_6aed000a} and Coin-Betting~\citep{cutkosky2018black} can achieves $\mathcal{O}(\log k/k)$ convergence rate when the stochastic gradient is bounded in $\mathcal{Y}$. If the subroutine has additional logarithmic dependence, it suffices to run the subroutine for $t\log^2(t)$ times using criterion II (see Appendix \ref{apdx:sec3}).}.
Parameter-free SGD~\citep{carmon2022making}
is partially parameter-agnostic that only requires the stochastic gradient bound rather than the strongly-convexity parameter.
\citet{mukkamala2017variants} and \citet{wang2019sadam} introduce the variants of AdaGrad, RMSProp and Adam for strongly-convex online learning, but they need to know both gradient bounds and strongly-convexity parameter for setting stepsizes. We will show in the next subsection that AdaGrad with a slower decaying rate is parameter-agnostic. We note that the analysis of this theorem is not the simple gluing of the outer loop and inner loop complexity, but requires more sophisticated control of the cumulative suboptimality $\mathcal{E}$. 
\end{remark}

With the popularity of computational resource demanding deep neural networks, in both minimization and minimax applications, people find high probability guarantees for a single  run of an algorithm useful~\citep{kavis2022high,DBLP:journals/corr/abs-2007-14294}. Given the lack of such guarantee in the minimax optimization, we provide a high probability convergence result for NeAda-AdaGrad in \Cref{apdx:high_prob}, which shows a similar sample complexity as  \Cref{thm:stoc} under the subGaussian noise.

\subsection{Generalized AdaGrad for strongly-convex subproblem}
We now introduce the generalized AdaGrad for minimizing strongly convex objectives, which can serve as an adaptive subroutine for \Cref{alg::neada-adagrad}, without requiring knowledge on the strongly convex parameter. We analyze it for the more general online convex optimization setting: at each round $t$, the learner updates its decision $x_t$, then it suffers a loss $f_t(x_t)$ and receives the sub-gradient of $f_t$. The generalized AdaGrad, described in \Cref{alg:adagrad_sc}, keeps the cumulative gradient norm $v_t$ and takes the stepsize $\eta/v_t^\alpha$ with a decaying rate $\alpha \in (0,1]$. When $\alpha = 1/2$, it reduces to the scalar version of the original AdaGrad \citep{duchi2011adaptive}; when $\alpha = 1$, it reduced to the scalar version of SC-AdaGrad \citep{mukkamala2017variants}.

\begin{algorithm}[ht] 
    \caption{Generalized AdaGrad for Strongly-convex Online Learning}
    \setstretch{1.25}
    \begin{algorithmic}[1]
        \STATE Input: $x_0$, $v_0 > 0$ and $0 < \alpha \leq 1$ .
        \FOR{$t = 0,1,2,...$}
            \STATE receive $g_t \in \partial f_t(x_t)$
            \STATE $v_{t+1} = v_{t} + \| g_t \|^2$ 
            \STATE $x_{t+1} = \mathcal{P}_{\mathcal{X}} \left(x_t - \frac{\eta}{v_{t+1}^{\alpha}} g_t\right)$ 
        \ENDFOR
    \end{algorithmic} \label{alg:adagrad_sc}
\end{algorithm}

\begin{theorem}
\label{thm:gen_ada}
 Consider \Cref{alg:adagrad_sc} for online convex optimization and assume that (i) $f_t$ is continuous and $\mu$-strongly convex,  (ii) $\mathcal{X}$ is convex and compact with diameter $\mathcal{D}$;
 (iii) $\|g_t\| \leq G$ for every $t$. Then for $0 < \alpha < 1$ with any $\eta > 0$, the regret of \Cref{alg:adagrad_sc} satisfies:
  \begin{equation*}
      \max_{x \in \mathcal{X}} \sum_{t = 0}^{T-1} (f_t(x_t) - f_t(x)) \leq  
      c_{\alpha} + d_{\alpha} \left(v_0 + \sum_{t = 1}^{T-1} \|g_t\|^2  \right)^{1-\alpha},   
\end{equation*}
    and for $\alpha = 1$ with $\eta \geq \frac{G^2}{2 \mu}$, 
\begin{equation*}
    \max_{x \in \mathcal{X}} \sum_{t = 0}^{T-1} (f_t(x_t) - f_t(x)) \leq  
    c_{\alpha} + d_{\alpha} \log\left(v_0 + \sum_{t = 1}^{T-1} \|g_t\|^2  \right),  
\end{equation*} 
  where $c_\alpha$ and $d_\alpha$ are constants depending on the problem parameters, $\alpha$ and $\eta$. 
\end{theorem}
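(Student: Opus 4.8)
\emph{Proof plan.} The approach is the classical potential-function argument for projected (sub)gradient methods, adapted to the data-dependent stepsizes $\eta/v_{t+1}^\alpha$, with strong convexity used to pay for the terms created by the stepsize being non-constant. Fix an arbitrary comparator $x\in\mathcal X$. First I would combine two one-step inequalities: (i) $\mu$-strong convexity of $f_t$ with $g_t\in\partial f_t(x_t)$ gives $f_t(x_t)-f_t(x)\le \langle g_t,x_t-x\rangle-\tfrac{\mu}{2}\|x_t-x\|^2$; and (ii) the update $x_{t+1}=\mathcal P_{\mathcal X}(x_t-\tfrac{\eta}{v_{t+1}^\alpha}g_t)$ together with non-expansiveness of the projection gives $\langle g_t,x_t-x\rangle\le \tfrac{v_{t+1}^\alpha}{2\eta}\bigl(\|x_t-x\|^2-\|x_{t+1}-x\|^2\bigr)+\tfrac{\eta}{2v_{t+1}^\alpha}\|g_t\|^2$.

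Summing over $t=0,\dots,T-1$ and applying summation by parts to $\sum_t\tfrac{v_{t+1}^\alpha}{2\eta}(\|x_t-x\|^2-\|x_{t+1}-x\|^2)$, the regret is bounded by the boundary term $\tfrac{v_1^\alpha}{2\eta}\|x_0-x\|^2$ (at most $\tfrac{v_1^\alpha}{2\eta}\mathcal D^2$), a negative term $-\tfrac{v_T^\alpha}{2\eta}\|x_T-x\|^2\le 0$, a ``drift'' sum $(\mathrm I):=\sum_{t\ge 1}\bigl(\tfrac{v_{t+1}^\alpha-v_t^\alpha}{2\eta}-\tfrac{\mu}{2}\bigr)\|x_t-x\|^2$, and a ``gradient'' sum $(\mathrm{II}):=\tfrac{\eta}{2}\sum_{t=0}^{T-1}\tfrac{\|g_t\|^2}{v_{t+1}^\alpha}$.

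For $(\mathrm{II})$, since $\|g_t\|^2=v_{t+1}-v_t$, I would use $\tfrac{v_{t+1}-v_t}{v_{t+1}}\le\log v_{t+1}-\log v_t$ when $\alpha=1$, giving $(\mathrm{II})\le\tfrac{\eta}{2}\log(v_T/v_0)=\tfrac{\eta}{2}\log\!\bigl(1+\sum_t\|g_t\|^2/v_0\bigr)$; and the integral comparison $\tfrac{v_{t+1}-v_t}{v_{t+1}^\alpha}\le\int_{v_t}^{v_{t+1}}u^{-\alpha}\,du$ when $\alpha<1$, giving $(\mathrm{II})\le\tfrac{\eta}{2(1-\alpha)}v_T^{1-\alpha}$, which with $v_T\le v_0+TG^2$ is $O(T^{1-\alpha})$; both cases are covered by the stated $c_1+c_2T^{1-\alpha}\log(1+\sum\|g_t\|^2/v_0)$. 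For $(\mathrm I)$, by concavity of $u\mapsto u^\alpha$, $v_{t+1}^\alpha-v_t^\alpha\le\alpha v_t^{\alpha-1}\|g_t\|^2\le\alpha v_t^{\alpha-1}G^2$. When $\alpha=1$ this is $\le G^2$, so every coefficient in $(\mathrm I)$ is nonpositive once $\eta$ is a suitable constant multiple of $G^2/\mu$ (the sharp threshold $\eta\ge G^2/(2\mu)$ coming from a careful split of the boundary and consecutive-round terms), so $(\mathrm I)\le 0$. When $0<\alpha<1$, the coefficient $\tfrac{v_{t+1}^\alpha-v_t^\alpha}{2\eta}-\tfrac\mu2$ becomes nonpositive as soon as $v_t$ exceeds a threshold $V^\star(\alpha,\eta,\mu,G)$; for the finitely many earlier rounds I would bound $\|x_t-x\|^2\le\mathcal D^2$ and observe that $\sum_{\text{those }t}(v_{t+1}^\alpha-v_t^\alpha)\le(V^\star+G^2)^\alpha$ telescopes to a $T$-independent constant, absorbed into $c_1$ (rounds with $g_t=0$ are harmless since then $x_t$ already minimizes $f_t$). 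Gathering the boundary term and these constants into $c_1$ and the $(\mathrm{II})$ bound into $c_2T^{1-\alpha}\log(\cdots)$ finishes the proof, with constants depending only on $G,\mathcal D,\mu,v_0,\alpha,\eta$.

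The main obstacle is $(\mathrm I)$ in the regime $0<\alpha<1$: a priori the terms generated by the time-varying, non-monotone effective stepsize could accumulate with $T$, and the crux is to show they telescope over the initial ``warm-up'' window and contribute only a $T$-independent constant, using both compactness of $\mathcal X$ and the bounded total increment of $v_t^\alpha$. A secondary point requiring care is pinning down the sharp constant $\eta\ge G^2/(2\mu)$ for $\alpha=1$, which is sensitive to whether the stepsize at round $t$ is indexed by $v_t$ or $v_{t+1}$ when the telescoping is arranged.
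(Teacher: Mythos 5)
Your argument is correct, and it is genuinely different from (and more elementary than) the one in the paper. The paper does not work with the plain potential $\|x_t-x^*\|^2$: it multiplies the one-step expansion by the time- and data-dependent weight $(t+1)^{1-\alpha}v_{t+1}^{2\alpha-1}$, obtains a \emph{weighted} regret on the left-hand side (later converted back to plain regret via $(t+1)^{1-\alpha}/v_{t+1}^{1-\alpha}\geq (v_0+G^2)^{\alpha-1}$), and then must control the sign of the resulting coefficient $B_t$; for $\tfrac12<\alpha<1$ this requires the ``sudden change'' counting argument (each round with $B_t>0$ forces $\|g_t\|^2$ to exceed a fixed fraction of $v_t^{1-\alpha}$, so $v_t$ increases by a definite amount, and since $B_t>0$ is impossible once $v_t$ is large, this can happen only $K=O(1)$ times), with a separate, easier treatment of $\alpha\le\tfrac12$. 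Your unweighted decomposition avoids all of this: after summation by parts the drift coefficient is $\tfrac{1}{2\eta}(v_{t+1}^{\alpha}-v_t^{\alpha})-\tfrac{\mu}{2}$, positivity forces $v_t<V^\star(\alpha,\eta,\mu,G)$, the set of such rounds is an initial segment because $v_t$ is nondecreasing, and over that warm-up window the positive parts telescope to $\tfrac{\mathcal{D}^2}{2\eta}\bigl(v_{t_0}^\alpha-v_1^\alpha\bigr)\le \tfrac{\mathcal{D}^2}{2\eta}(V^\star+G^2)^\alpha$, a $T$-independent constant — one uniform argument for all $0<\alpha<1$, no case split and no counting. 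Your gradient-sum bound ($\log$ for $\alpha=1$, integral comparison giving $O(T^{1-\alpha})$ for $\alpha<1$, with an $\alpha$-dependent constant) is also at least as strong as the paper's $T^{1-\alpha}\log(\cdot)$ bound obtained from its Lemma on $\sum_t a_t t^{\alpha}/\sum_{k\le t}a_k$, so your final bound fits the stated form. A further small point in your favor: you correctly pass through the projection via non-expansiveness, which the paper's Part I glosses over by writing the update as an equality.

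One caveat: for $\alpha=1$ your displayed computation, like the paper's own (its Part II concludes $B_t\le G^2-\eta\mu\le 0$ only when $\eta\ge G^2/\mu$), establishes the result under $\eta\ge G^2/\mu$, not the threshold $\eta\ge G^2/(2\mu)$ appearing in the theorem statement; your parenthetical claim that the factor $2$ can be recovered by ``a careful split'' is not substantiated. Since the paper has the same statement/proof mismatch, this does not put you behind its argument, but you should either prove the sharper threshold or state the condition you actually use.
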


  The theorem implies a logarithmic regret for the case $\alpha = 1$, but the stepsize needs knowledge about problem's parameters $\mu$ and $G$; similar results are  shown for SC-AdaGrad~\citep{mukkamala2017variants} and SAdam~\citep{wang2019sadam}. When $\alpha < 1$, the algorithm becomes parameter-agnostic and attains an $O(T^{1-\alpha})$ regret. Such parameter-agnostic phenomenon for smaller decaying rates is also observed for SGD in stochastic optimization \citep{fontaine2021convergence}.  Proving the regret bound for the generalized AdaGrad with $\alpha < 1$ in the online setting is  challenging, since the adversarial $g_t$ can lead to a ``sudden'' change in the stepsize. In the proof, we bound the possible number of times such ``sudden'' change could happen. 
  
  To the best of our knowledge, this is the first regret bound for adaptive methods with general decaying rates in the strongly convex setting. By online-to-batch conversion~\citep{kakade2008generalization},  it can be converted to $O(T^{-\alpha})$ rate in the strongly convex stochastic optimization. \citet{xie2020linear} prove the $O(1/T)$ convergence rate, or a linear convergence rate when the smoothness parameter is known, for AdaGrad with $\alpha = 1/2$ in this setting, but under a strong assumption --- Restricted Uniform Inequality of Gradients (RUIG) --- that requires the loss function with respect to each sample $\xi$ to satisfy the error bound condition with some probability.

\section{Experiments}
\label{sec:experiments}

To evaluate the performance of NeAda,
we conducted experiments on simple test functions and
a real-world application of distributional robustness optimization (DRO). 
In all cases, we compare NeAda with the non-nested adaptive methods 
using  the same adaptive schemes. 
For notational simplicity, in all figure legends, we label the non-nested methods with the
names of the adaptive mechanisms used.
We observe from all our experiments that: 1) while non-nested adaptive
methods can diverge without the proper two-time-scale, NeAda with adaptive subroutine always converges; 2)
when the non-nested method converges, NeAda can achieve comparable or even better performance.

\begin{figure}[t]
    \centering
    \includegraphics[width=\textwidth]{./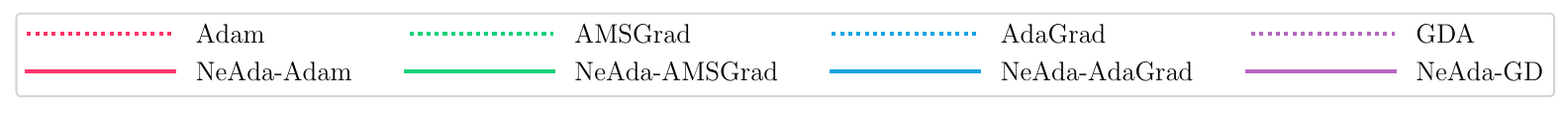}
    \begin{subfigure}[b]{0.25\textwidth}
      \centering
      \includegraphics[width=\textwidth]{./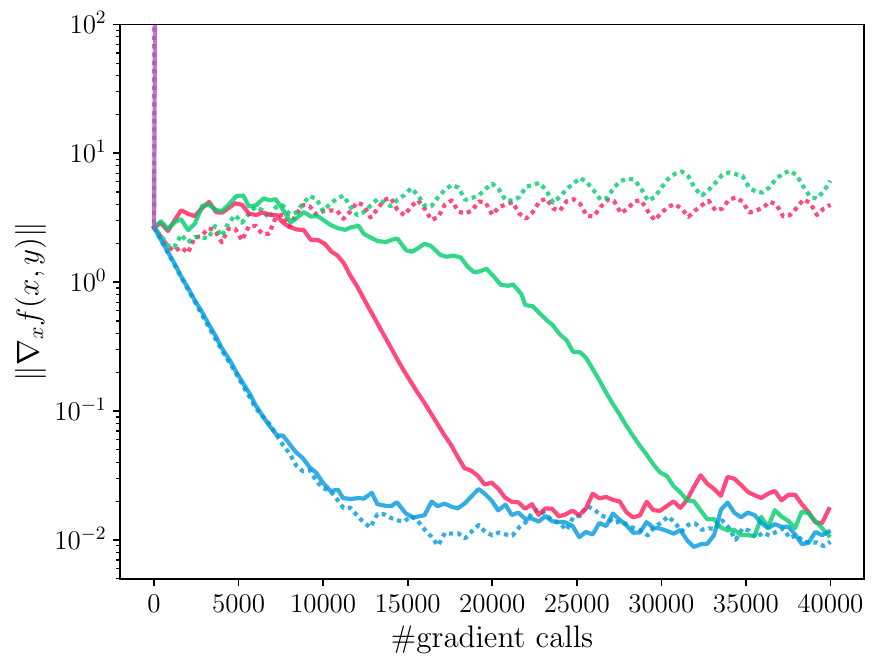}
      \caption{$r = 0.01$}
    \end{subfigure}
    \hfill
    \begin{subfigure}[b]{0.24\textwidth}
      \centering
      \includegraphics[width=\textwidth]{./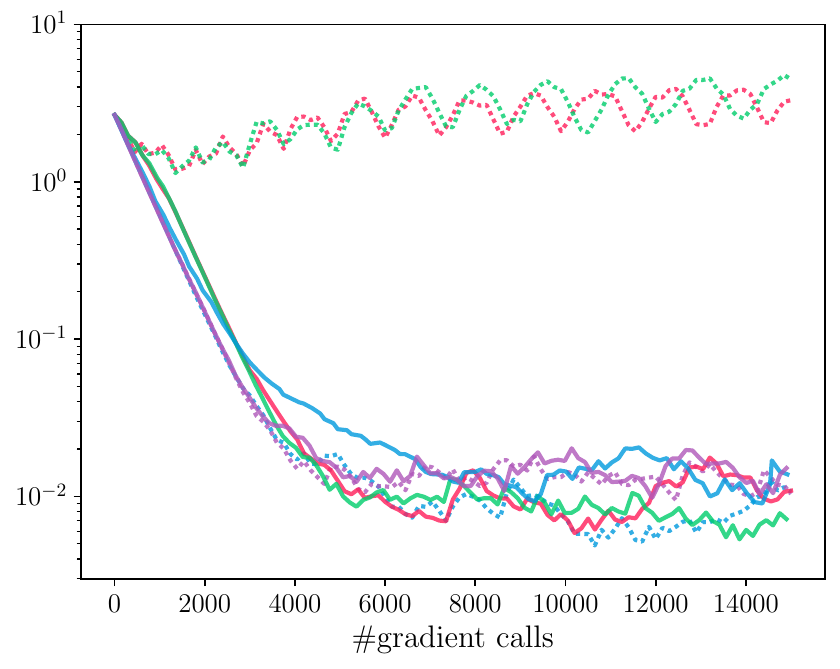}
      \caption{$r = 0.03$}
    \end{subfigure}
    \hfill
    \begin{subfigure}[b]{0.24\textwidth}
      \centering
      \includegraphics[width=\textwidth]{./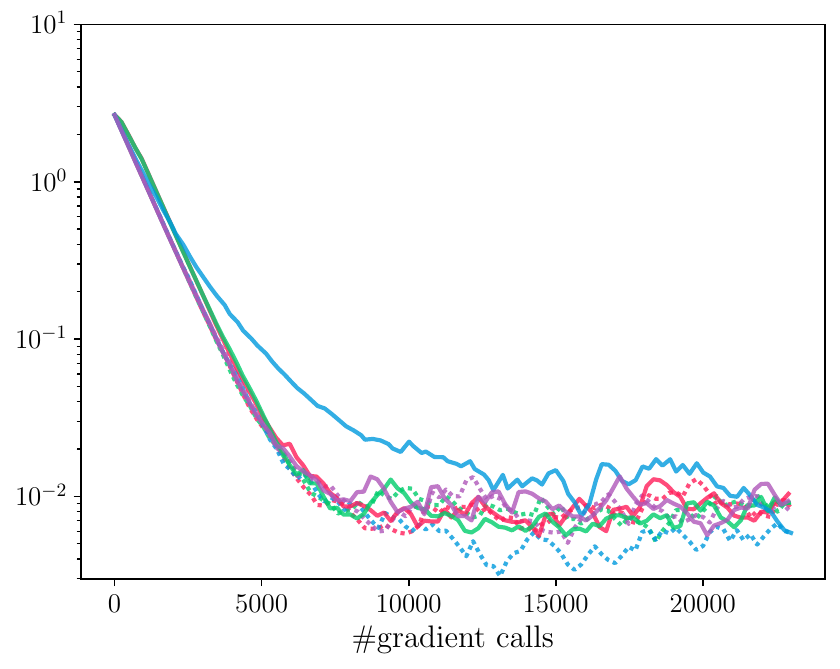}
      \caption{$r = 0.05$}
    \end{subfigure}
    \caption{Comparison between the two families of non-nested and nested adaptive methods on McCromick function
    with stochastic gradient oracles. $\sigma=0.01$, $\eta^y = 0.01$ and $r = \eta^y / \eta^x$.}%
    \label{fig:mccormick_stoch_diff_ratio}
\end{figure}

\subsection{Test functions} 

In Section \ref{sec::nonconvergence}, we have compared NeAda with non-nested methods on a quadratic function in \Cref{fig:1_2_func_determ_diff_ratio} and the observations match \Cref{lemma:nonconverge}. 
Now we consider a more complicated function
that is composed of McCormick function in $x$, a bilinear term, and a quadratic term in $y$, 
\[
f(x, y) = \sin(x_1 + x_2) + (x_1 - x_2)^2 - \frac{3}{2}x_1 + \frac{5}{2}x_2 + 1 + x_1 y_1 + x_2 y_2 - \frac{1}{2}(y_1^2 + y_2^2),
\]
For this function, we compare the adaptive frameworks in the stochastic setting with Gaussian noise.
As demonstrated in \Cref{fig:mccormick_stoch_diff_ratio},
non-nested methods are sensitive to the selection of the two-time-scale.
When the learning rate ratio is too small, e.g., $\eta^y/\eta^x = 0.01$, non-nested Adam, AMSGrad and GDA all fail to converge. We observe that GDA converges when the ratio reaches 0.03, while non-nested Adam and
AMSGrad still diverge until 0.05. Although non-nested adaptive methods require a smaller ratio than GDA in Lemma \ref{lemma:nonconverge}, this example illustrates that adaptive algorithms sometimes can be more sensitive to the time separation. In comparison, NeAda with adaptive
subroutine always converges regardless of the learning rate ratio.

\subsection{Distributional robustness optimization}

To justify the effectiveness
of NeAda on real-world applications, we carried out experiments on 
distributionally robust optimization~\citep{DBLP:conf/iclr/SinhaND18}, where
the primal variable is the model weights to be learned by minimizing the 
empirical loss while the dual variable is the adversarial perturbed
inputs. The dual variable problem targets finding perturbations that
maximize the empirical loss but not far away from the original inputs.
Formally, for model weights $x$ and adversarial samples $y$, we have:
\[
    \min_x \max_{y = [y_1,\dots,y_n]} f(x, y), \quad \text{where} \quad
f(x, y) \coloneqq \frac{1}{n} \sum_{i=1}^n f_i(x, y_i) - \gamma \norm*{y_i - v_i}^2,
\]
where $n$ is the total number of training samples, $v_i$ is the $i$-th original
input and $f_i$ is the loss function for the $i$-th sample.
$\gamma$ is a trade-off parameter between the empirical loss and the
magnitude of perturbations. When $\gamma$ is large enough, this problem
is nonconvex-strongly-concave, and following the same setting as
\citep{DBLP:conf/iclr/SinhaND18, sebbouh2022randomized}, we set $\gamma=1.3$. 
For NeAda, we use both
stopping criterion I with stochastic gradient and criterion II in our experiments.
For the results, we report the training loss and
the test accuracy on adversarial samples generated from
fast gradient sign method~(FGSM)~\citep{goodfellow2014explaining}.
FGSM can be
formulated as
\[
  x_{\text{adv}} = x + \epsilon \cdot \text{sign}\left(\nabla_x f(x)\right),
\]
where $\epsilon$ is the noise level. 
To get reasonable test accuracy,
NeAda with Adam as subroutine is compared with Adam with fixed 15 inner loop
iterations, which is consistent with the choice of inner loop steps in
\citep{DBLP:conf/iclr/SinhaND18}, and such choice obtains much better test
accuracy than the completely non-nested Adam.
Our experiments include a synthetic dataset and MNIST~\citep{lecun1998mnist}
with code modified from \citep{githubcode}.

\begin{figure}[t]
    \centering
    \begin{subfigure}[b]{0.4\textwidth}
      \centering
      \includegraphics[width=\textwidth]{./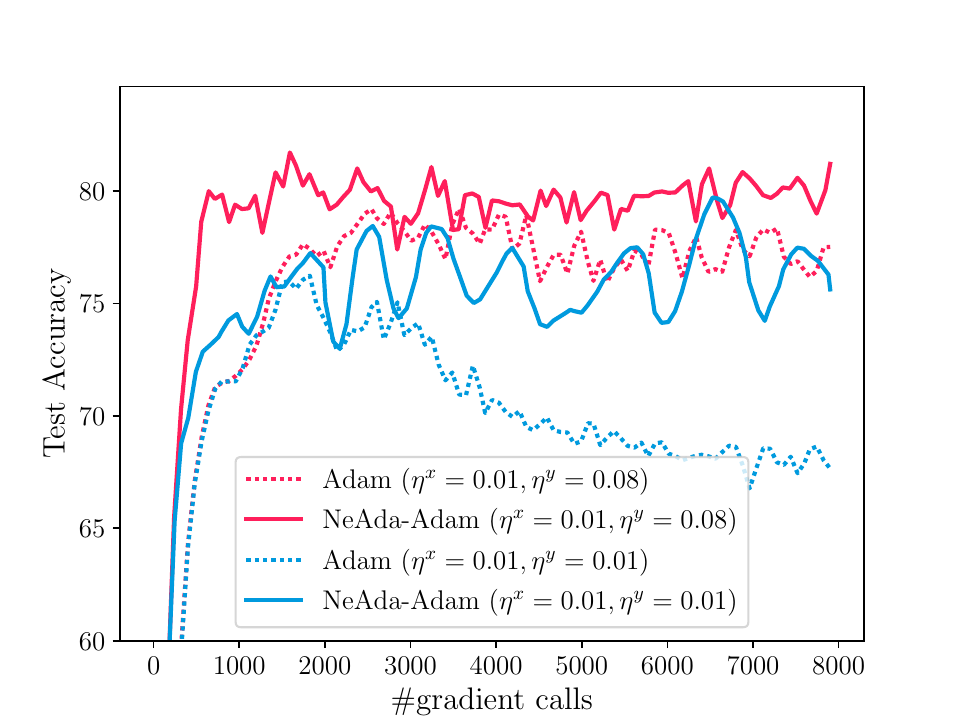}
      \caption{Test Accuracy}
      \label{fig:robust_optim_acc_2}
    \end{subfigure}
    \hspace{0.7cm}
    \begin{subfigure}[b]{0.4\textwidth}
      \centering
      \includegraphics[width=\textwidth]{./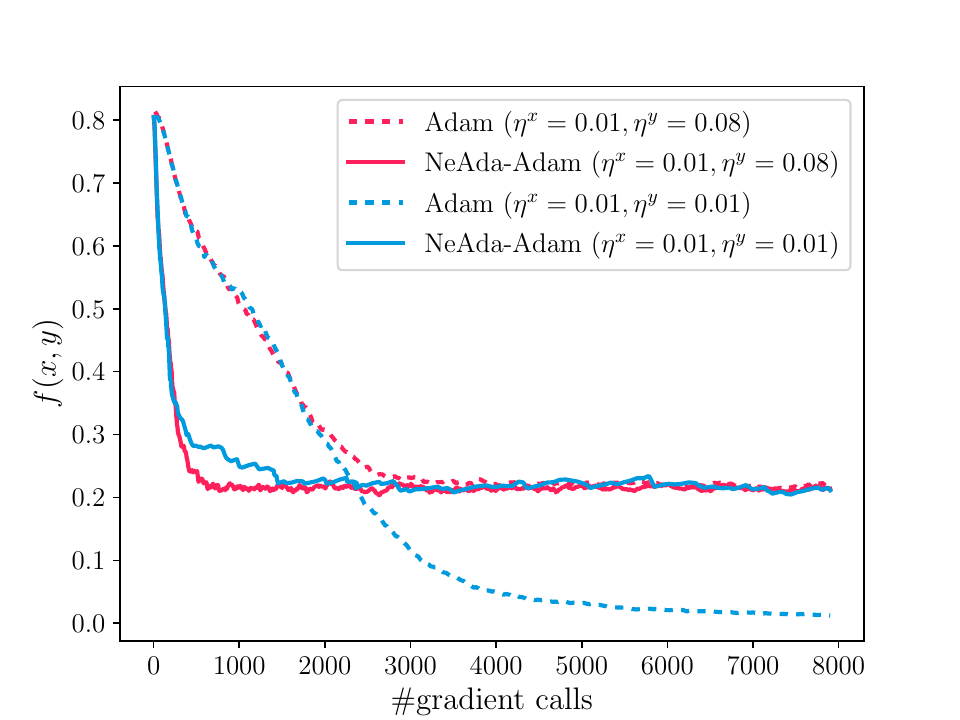}
      \caption{Training Loss}
      \label{fig:robust_optim_loss_2}
    \end{subfigure}
    \caption{Experimental results of distributional robustness optimization task on synthetic dataset.}%
    \label{fig:robust_optim_2}
\end{figure}

\paragraph{Results on Synthetic Dataset.}
We use the same data generation process as in~\citep{DBLP:conf/iclr/SinhaND18}. The inputs are 2-dimensional i.i.d. random Guassian vectors, i.e., $x_i \sim \mathcal{N}(0, I_2)$, where $I_2$ is the $2\times 2$ identity matrix. The corresponding $y_i$ is defined as
$y_i=\text{sign}(\norm*{x_i}_2 - \sqrt{2})$. Data points with norm in range $(\sqrt{2}/1.3, 1.3\sqrt{2})$ are removed to make the classification margin wide.
$10000$ training and $4000$ test data points are generated for our experiments.
The model we use is a three-layer MLP with ELU activations.

As shown in \Cref{fig:robust_optim_acc_2}, when the learning rates
are set to different scales, i.e., $\eta^x = 0.01, \eta^y = 0.08$ (red curves
in the figure), both methods achieve reasonable test errors. In this case,
NeAda has higher test accuracy and reaches such accuracy faster than Adam.
If we change the learning rates to the same scale, i.e., 
$\eta^x = 0.01, \eta^y=0.01$ (blue curves in the figure), NeAda retains
good accuracy while Adam drops to an unsatisfactory performance.
This demonstrates the adaptivity and less-sensitivity to learning
rates of NeAda. In addition, \Cref{fig:robust_optim_loss_2} illustrates the 
convergence speeds on the loss function, and NeAda (solid lines) always
decreases the loss faster than Adam. Note that Adam with the same
learning rates converges to a lower loss but suffers from overfitting, as shown
in \Cref{fig:robust_optim_acc_2} that its test accuracy is only about 68\%.

\paragraph{Results on  MNIST Dataset.}
For MNIST, we use a convolutional neural network with three convolutional
layers and one final
fully-connected layer. Following each convolutional layer,
ELU activation and batch normalization are used.

We compare NeAda with Adam under three
different noise levels and the accuracy is shown in 
\Cref{fig:mnist_result_0.1,fig:mnist_result_0.05,fig:mnist_result_0.02}.
Under all noise levels, NeAda outperforms Adam with the same learning rates.
When we have proper time-scale separation (the red curves), both methods
achieve good test accuracy, and NeAda achieves higher accuracy and converges faster. After we change to
the same learning rates for the primal and dual variables (the blue curves),
the accuracy drop of NeAda is slighter compared to Adam, especially when
$\epsilon=0.1$. As for the training loss shown in \Cref{fig:mnist_loss}, NeAda 
(the solid curves) is always faster at the beginning.
We also observed that with proper time-scale separation, NeAda reaches a lower loss.

\begin{figure}[t]
    \centering
    \begin{subfigure}[b]{0.245\textwidth}
      \centering
      \includegraphics[width=\textwidth]{./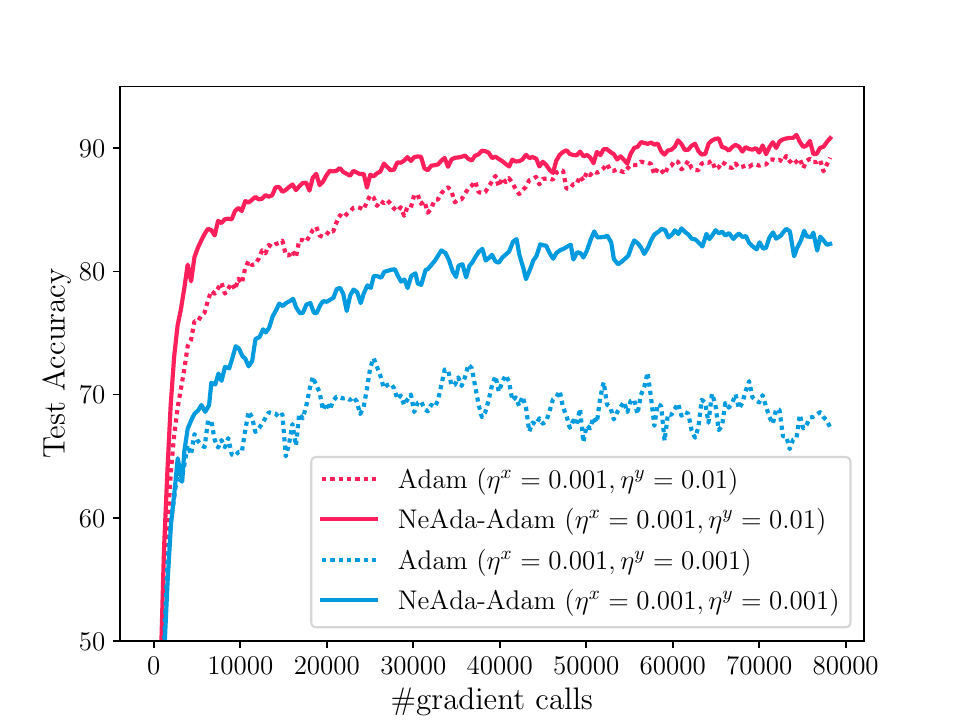}
      \caption{$\epsilon = 0.1$}
      \label{fig:mnist_result_0.1}
    \end{subfigure}
    \hfill
    \begin{subfigure}[b]{0.245\textwidth}
      \centering
      \includegraphics[width=\textwidth]{./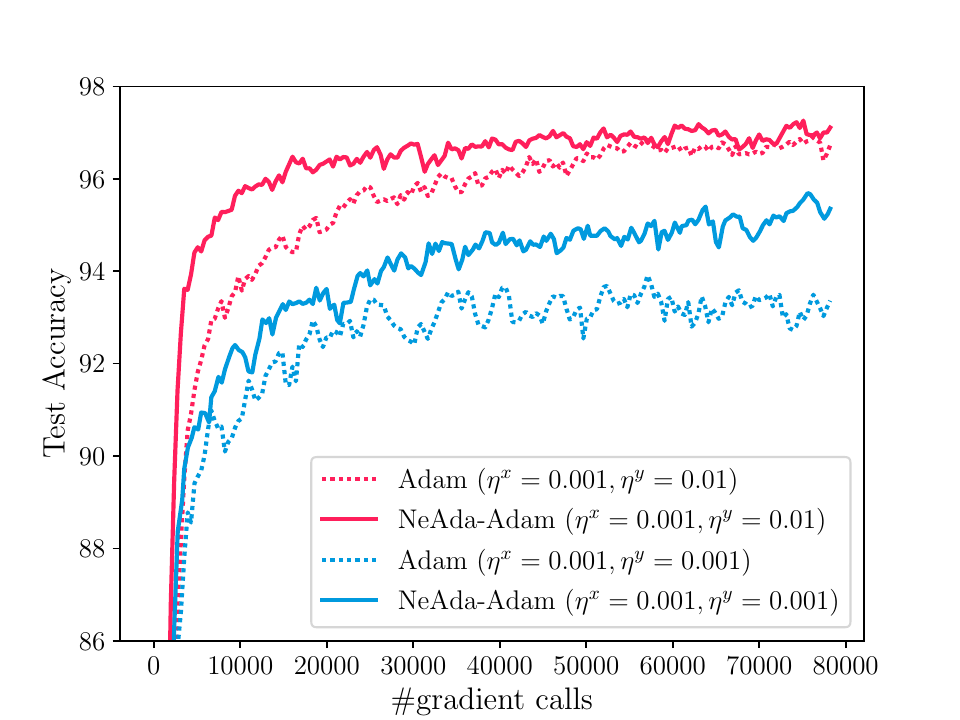}
      \caption{$\epsilon = 0.05$}
      \label{fig:mnist_result_0.05}
    \end{subfigure}
    \hfill
    \begin{subfigure}[b]{0.245\textwidth}
      \centering
      \includegraphics[width=\textwidth]{./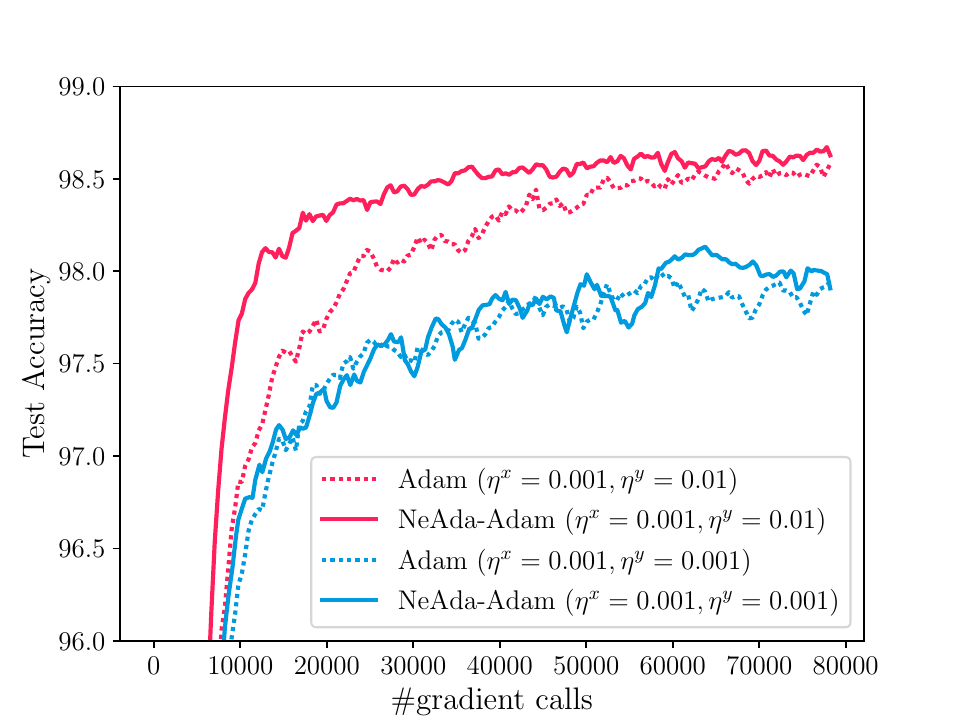}
      \caption{$\epsilon = 0.02$}
      \label{fig:mnist_result_0.02}
    \end{subfigure}
    \begin{subfigure}[b]{0.245\textwidth}
      \centering
      \includegraphics[width=\textwidth]{./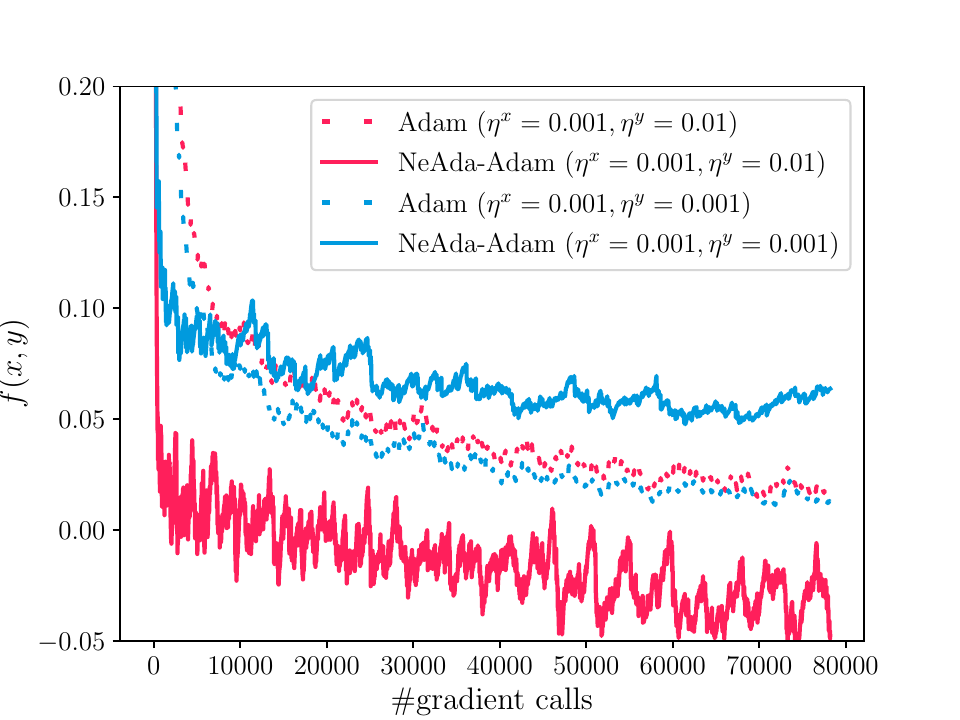}
      \caption{loss}
      \label{fig:mnist_loss}
    \end{subfigure}
    \caption{Results of distributional robustness optimization task on MNIST. $\epsilon$ is the noise level.}%
    \label{fig:mnist_result}
\end{figure}

\section{Conclusion}

Both non-nested and nested adaptive methods are popular in nonconvex minimax problems, e.g., the training of GANs. In this paper, we demonstrate that non-nested algorithms may fail to converge when the time-scale separation is ignorant of the problem parameter even when the objective is strongly-concave in the dual variable with noiseless gradients information. We propose fixes to this problem with a family of nested algorithms--NeAda, that nests the max oracle of the dual variable under an inner loop stopping criterion. The proper stopping criterion will help to balance the outer loop progress and inner loop accuracy. NeAda-AdaGrad attains the near-optimal complexity without a priori knowledge of problem parameters in the nonconvex-strongly-concave setting. It can be a future direction to design parameter-agnostic algorithms for nonconvex-concave minimax problems or more general regimes by leveraging recent progress in nonconvex minimax optimization and the adaptive analysis in this paper. Another interesting direction is to investigate the convergence behavior of Adam-type algorithms with general decaying rates in the strongly convex online optimization.

\section*{Acknowledgement}
This work was supported by an ETH Research Grant funded through the ETH Zurich Foundation.

%\clearpage

% \bibliographystyle{alpha}
\bibliographystyle{plainnat}
\bibliography{ref}

\clearpage

\appendix

\section{Helper Lemmas and Proofs for Section \ref{sec::nonconvergence}}
\label{apdx:lemma}

\subsection{Helper Lemmas}

\begin{lemma}[Lemma~4.3 in \citep{lin2020gradient} and Lemma~A.5 in \citep{nouiehed2019solving}]
\label{lemma:smooth}
    Under \Cref{assum:smooth,assum:sc}, define
$\Phi(x)=\max_{y\in\mathcal{Y}} f(x, y)$. Define $y^{*}(x) = \argmax_{y\in\mathcal{Y}} f(x, y)$. Then $y^*(\cdot)$ is $\kappa$-Lipschitz with $\kappa=\frac{l}{\mu}$,  
$\Phi(\cdot)$ is L-smooth with $L:=l+l \kappa$ and $\nabla \Phi(x)=\nabla_{x} f\left(x,
y^{*}(x)\right)$.
\end{lemma}

\bigskip

\begin{lemma}
  \label{lemma:bound_sum}
  Let $x_1, ..., x_T$ be a sequence of non-negative real numbers, $x_1 > 0$ and
  $0 < \alpha < 1$. Then we have 
  \[
    \left(\sum_{t=1}^T x_t\right)^{1-\alpha} 
      \leq \sum_{t=1}^T \frac{x_t}{\left(\sum_{k=1}^t x_k\right)^{\alpha}}
      \leq \frac{1}{1-\alpha} \left(\sum_{t=1}^T x_t\right)^{1-\alpha}.
  \]
  When $\alpha = 1$, we have
  \[
      \sum_{t=1}^T \frac{x_t}{\left(\sum_{k=1}^t x_k\right)^{\alpha}}
      \leq 1 + \log\left(\frac{\sum_{t=1}^t x_t}{x_1}\right).
  \]
\end{lemma}

\begin{remark}
 The case $\alpha = 1/2$ has been noticed in~\citep{auer2002adaptive}, and the upper bound in the case $\alpha = 1$ has already been noticed in \citep{ward2019adagrad}. Here we we extend it to $0 < \alpha \leq 1$.
\end{remark}

\begin{proof}
  For the first inequality, we have
  \begin{align*}
     \sum_{t=1}^T \frac{x_t}{\left(\sum_{k=1}^t x_k\right)^{\alpha}}
     \geq \sum_{t=1}^T \frac{x_t}{\left(\sum_{k=1}^T x_k\right)^{\alpha}}
     = \frac{\sum_{t=1}^T x_t}{\left(\sum_{t=1}^T x_t\right)^{\alpha}}
     = \left(\sum_{t=1}^T x_t\right)^{1-\alpha}.
   \end{align*}
  For the second inequality, we follow a similar procedure as in the proof of
  Lemma~3.5 of~\citep{auer2002adaptive}.
  First consider the case $0 < \alpha < 1$. By Bernoulli's inequality, as $y \leq 1$ and $0 < \alpha < 1$, we have $1 - (1-\alpha)y \geq (1 - y)^{1-\alpha}$. Denoting $S_t = \sum_{k=1}^t x_k$ and $S_0 = 0$,
  by replacing $y$ with $x_t / S_t$, we have
  \[
    (1-\alpha) \frac{x_t}{S_t} \leq 1 - \left(1 - \frac{x_t}{S_t}\right)^{1-\alpha}.
  \]
  Multiplying both sides by $S_t^{1-\alpha}$, then we have
  \[
    (1-\alpha) \frac{x_t}{S_t^\alpha} \leq S_t^{1-\alpha} - S_{t-1}^{1-\alpha}.
  \]
  Summing over the inequalities for $t=1, ..., T$ gives us the desired result.
  For $\alpha=1$, it is proved by \citep{ward2019adagrad}.
\end{proof}

\bigskip

\begin{prop} \label{prop log bound}
If $x^2 \leq (a_1+a_2x)\left(a_3 + a_4\log(a_5a_1 + a_5a_2x) \right)$ with $x, a_1, a_2, a_3, a_4, a_5\geq 0$ and $a_2 > 0$, then 
$$ x \leq \frac{a_1}{a_2} + 16a_2^3a_4^2a_5 + 3a_2^2a_3^2 $$
\end{prop}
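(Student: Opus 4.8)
The plan is to reduce the mixed polynomial-logarithmic inequality to a purely polynomial one by absorbing the logarithm into a power. The starting point is the elementary bound $\log(u) \le \theta u - \log\theta - 1 \le \theta u$ valid for all $u > 0$ and any $\theta > 0$ (equivalently, $\log u \le u/e$ at the optimal $\theta = 1/u$, but we keep $\theta$ free to tune later). Applying this to $u = a_5 a_1 + a_5 a_2 x$ with a parameter $\theta$ to be chosen, the hypothesis becomes
\[
x^2 \le (a_1 + a_2 x)\bigl(a_3 + a_4 \theta a_5 (a_1 + a_2 x)\bigr).
\]
Writing $w = a_1 + a_2 x \ge 0$, this is a quadratic inequality in $w$ (hence in $x$): $x^2 \le a_3 w + a_4 \theta a_5 w^2$. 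Since $x = (w - a_1)/a_2 \le w/a_2$, we have $x^2 \le w^2/a_2^2$ is the wrong direction; instead we must go the other way: from $x^2 \le a_3 w + a_4\theta a_5 w^2$ and $w \le a_1 + a_2 x$ we want to conclude $x$ is small.

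Here is the cleaner route I would actually take. Split into two cases according to which term on the right dominates. \textbf{Case 1:} $a_3 \ge a_4 \log(a_5 a_1 + a_5 a_2 x)$. Then $x^2 \le 2 a_3(a_1 + a_2 x)$, a genuine quadratic in $x$; solving gives $x \le a_2 a_3 + \sqrt{a_2^2 a_3^2 + 2 a_1 a_3} \le 2 a_2 a_3 + \frac{a_1}{a_2} \cdot \frac{a_2}{\,?\,}$ — more carefully, using $\sqrt{p+q}\le\sqrt p+\sqrt q$ and $\sqrt{2a_1a_3}\le \tfrac{a_1}{a_2}+\tfrac12 a_2 a_3$ (AM-GM), one gets $x \le \frac{a_1}{a_2} + 3 a_2^2 a_3^2$ after crude bounding (here $a_2^2 a_3^2 \ge a_2 a_3$ may fail, so one keeps both $a_2 a_3$ and $a_2^2 a_3^2$ terms and notes $a_2 a_3 \le \frac12(1 + a_2^2 a_3^2)$, etc. — the constant $3$ in the target is deliberately loose). \textbf{Case 2:} $a_4 \log(a_5 a_1 + a_5 a_2 x) \ge a_3$. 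Then $x^2 \le 2 a_4 (a_1 + a_2 x) \log(a_5 a_1 + a_5 a_2 x)$. Now apply $\log u \le \tfrac{1}{\lambda} u^{\lambda}$ for any $\lambda \in (0,1)$ (e.g. $\lambda = 1/2$, giving $\log u \le 2\sqrt u$): with $\lambda = 1/2$,
\[
x^2 \le 4 a_4 (a_1 + a_2 x)\sqrt{a_5 a_1 + a_5 a_2 x} = 4 a_4 \sqrt{a_5}\,(a_1 + a_2 x)^{3/2}.
\]
Setting $w = a_1 + a_2 x$ and using $x \le w / a_2$, hence $x^2 \le w^2/a_2^2$ is again the wrong way, so instead use $w \le a_1 + a_2 x$ to write $x^2 \le 4a_4\sqrt{a_5}\, w^{3/2}$ with $w = a_1 + a_2 x$: if $x \ge a_1/a_2$ then $w \le 2 a_2 x$, so $x^2 \le 4 a_4 \sqrt{a_5} (2 a_2 x)^{3/2} = 4\sqrt2\, a_4 \sqrt{a_5}\, a_2^{3/2} x^{3/2}$, i.e.\ $\sqrt x \le 4\sqrt 2\, a_4 \sqrt{a_5}\, a_2^{3/2}$, giving $x \le 32 a_4^2 a_5 a_2^3$; if instead $x < a_1/a_2$ we are already done. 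Combining the two cases, $x \le \frac{a_1}{a_2} + 16 a_2^3 a_4^2 a_5 + 3 a_2^2 a_3^2$ up to checking the constants absorb correctly (the factor $16$ vs.\ $32$ suggests the paper uses a sharper constant in $\log u \le \tfrac1\lambda u^\lambda$ or a different case split threshold — e.g.\ choosing $\lambda$ optimally or bounding $w \le a_1 + a_2 x$ without the factor $2$ when $x\ge a_1/a_2$ is replaced by $x \ge \text{something}$; I would tune the threshold in the case split to land exactly on $16$).

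The main obstacle is purely bookkeeping of constants: the inequality $\log u \le \tfrac1\lambda u^\lambda$ and the AM-GM steps introduce slack, and one must choose the case-split threshold (I used $a_3 \lessgtr a_4\log(\cdot)$ and $x \lessgtr a_1/a_2$) and possibly the exponent $\lambda$ so that the three resulting bounds fit under $\frac{a_1}{a_2}$, $16 a_2^3 a_4^2 a_5$, and $3 a_2^2 a_3^2$ respectively. There is no conceptual difficulty — degenerate cases ($a_2 > 0$ is assumed; $a_1 = 0$ or $a_5 = 0$ only make the bound easier, though $a_5 = 0$ makes $\log(0)$ problematic, so one should note the hypothesis is vacuous or interpret $a_5 a_1 + a_5 a_2 x$ as forcing $a_5 > 0$ whenever the log term is active) are handled by inspection. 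I would present Case 1 and Case 2 in that order, deriving an explicit bound in each, then take the maximum and coarsen to the stated closed form.
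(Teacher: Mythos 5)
Your two load-bearing ideas are the same ones the paper uses: the split on $a_2x \lessgtr a_1$ so that $a_1+a_2x \le 2a_2x$ in the nontrivial case, and absorbing the logarithm via $\log u \le O(\sqrt u)$ to reduce to a polynomial inequality. But as written the argument does not reach the claimed constants, and the obstruction is structural, not bookkeeping: the extra dominance split between $a_3$ and $a_4\log(\cdot)$ doubles whichever term survives in each branch. In the log branch, even after fixing the arithmetic slip ($(2a_2x)^{3/2} = 2\sqrt2\,a_2^{3/2}x^{3/2}$, so your displayed prefactor should be $8\sqrt2$, giving $x \le 128\,a_2^3a_4^2a_5$) and even if you upgrade to the sharp bound $\log u \le \sqrt u$ (valid for all $u>0$), the doubled term still only yields $x \le 32\,a_2^3a_4^2a_5$, overshooting the claimed $16\,a_2^3a_4^2a_5$. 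In the $a_3$-branch you end with a term of order $a_2a_3$ (your quadratic plus AM--GM gives roughly $x \le \tfrac{a_1}{a_2} + \tfrac52 a_2a_3$), which cannot be dominated by $3a_2^2a_3^2$ when $a_2a_3$ is small, and the patch $a_2a_3 \le \tfrac12(1+a_2^2a_3^2)$ adds a constant that is not in the statement. You flag both problems and defer them to "tuning the threshold", but no threshold choice removes the factor-of-two loss the split itself creates, so the proof does not close as proposed.

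The paper avoids the split entirely, and that is exactly where its constants come from: for $a_2x \ge a_1$ it keeps the sum intact, $x^2 \le 2a_2x\left(a_3 + a_4\log(2a_5a_2x)\right) \le 2a_2x\left(a_3 + a_4\sqrt{2a_5a_2x}\right)$, divides by $x$ to get $x \le 2a_2a_3 + 2a_2a_4\sqrt{2a_5a_2x}$, squares with $(a+b)^2 \le 2a^2+2b^2$ to obtain $x^2 \le 8a_2^2a_3^2 + 16a_2^3a_4^2a_5\,x$, and solves this quadratic; the division by $x$ plus the Young-type squaring performs automatically the balancing you were attempting with a hand-tuned threshold, and $16$ is precisely the coefficient of $x$ in that quadratic. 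One point in your favour: your unease about $a_2a_3$ versus $a_2^2a_3^2$ is well-founded — the paper's quadratic leaves a residual of size $\sqrt{8a_2^2a_3^2} = 2\sqrt2\,a_2a_3$, and coarsening that into $3a_2^2a_3^2$ (rather than $3a_2a_3$) also requires $a_2a_3$ bounded below, so that wrinkle lives in the statement itself; but the $16\,a_2^3a_4^2a_5$ term is genuinely attainable only by the un-split argument.
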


\begin{proof}
The proof is similar to Lemma 6 in \citep{li2019convergence}. If $a_2x < a_1$, we have $x \leq a_1/a_2$. Assume $a_2x \geq a_1$, then
$$x^2\leq 2a_2x(a_3 + a_4\log(2a_5a_2x))  \leq 2a_2x \left( a_3 + a_4\sqrt{2a_5a_2x} \right), $$
which implies
$$x \leq 2a_2a_3 + 2a_2a_4\sqrt{2a_5a_2x} \quad \Longrightarrow \quad x^2\leq 8a_2^2a_3^2 + 16a_2^3a_4^2a_5x. $$
Solving this, we get
$$ 
x \leq 8a_2^3a_4^2a_5 + \sqrt{64a_2^6a_4^4a_5^2 + 8a_2^4a_3^4} \leq 16a_2^3a_4^2a_5 + 3a_2^2a_3^2. 
$$

\end{proof}

\begin{prop} \label{prop recursion}
Assume $x_t, a_t, b_t > 0$, for $t = 0, 1, 2,...$, and $x_{t+1} \leq a_tx_t + b_t$, then we have
$$
x_T \leq \left(\prod_{t = 0}^{T-1}a_t\right)x_0 + \sum_{t=0}^{T-2}\left(\prod_{i=t+1}^{T-1}a_i\right)b_t + b_{T-1}, \quad T \geq 2
$$
\end{prop}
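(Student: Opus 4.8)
The plan is to prove the bound by induction on $T$, unrolling the one‑step recursion $x_{t+1}\le a_t x_t + b_t$ and tracking how the multiplicative coefficients accumulate. (I read the displayed hypothesis $x_t\le a_t x_t + b_t$ as the intended recursion $x_{t+1}\le a_t x_t + b_t$, which is the form in which the proposition is applied.) Since every $a_t$ is positive, multiplying a valid inequality by $a_t$ preserves its direction, and this is the only structural fact the argument needs.

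For the base case $T=2$, apply the recursion at $t=1$ and then at $t=0$: $x_2\le a_1 x_1 + b_1 \le a_1(a_0 x_0 + b_0) + b_1 = a_1 a_0 x_0 + a_1 b_0 + b_1$, which is exactly the claimed expression, with $\prod_{t=0}^{1}a_t = a_0 a_1$, the single sum term (at $t=0$) equal to $a_1 b_0$, and $b_{T-1}=b_1$. For the inductive step, suppose the bound holds for some $T\ge 2$. Apply the recursion at index $T$ to get $x_{T+1}\le a_T x_T + b_T$, substitute the inductive hypothesis for $x_T$, and multiply through by $a_T>0$. This yields $x_{T+1}\le a_T\bigl(\prod_{t=0}^{T-1}a_t\bigr)x_0 + a_T\sum_{t=0}^{T-2}\bigl(\prod_{i=t+1}^{T-1}a_i\bigr)b_t + a_T b_{T-1} + b_T$. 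Now recognize $a_T\prod_{t=0}^{T-1}a_t = \prod_{t=0}^{T}a_t$; absorb $a_T$ into each product $\prod_{i=t+1}^{T-1}a_i$ to turn it into $\prod_{i=t+1}^{T}a_i$; and reinterpret $a_T b_{T-1} = \bigl(\prod_{i=T}^{T}a_i\bigr)b_{T-1}$ as the $t=T-1$ summand, so the sum's range extends from $t\le T-2$ to $t\le T-1$. The remaining $b_T$ then plays the role of $b_{(T+1)-1}$. Reindexing gives precisely the claimed formula with $T$ replaced by $T+1$, completing the induction.

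The argument is entirely elementary; the only thing requiring care is the bookkeeping of the product ranges and the off‑by‑one in peeling off the final $b_{T-1}$ term. I would double‑check that the empty‑product convention $\prod_{i=T}^{T-1}a_i = 1$ is used consistently, so that the $t=T-1$ term of the sum comes out as exactly $b_{T-1}$ and the boundary cases $T=2$ and $T=3$ visibly match the general pattern. There is no genuinely hard step here.
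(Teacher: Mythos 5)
Your proof is correct and follows essentially the same route as the paper: induction on $T$, with the base case $T=2$ computed explicitly and the inductive step obtained by applying the (intended) recursion $x_{T+1}\le a_T x_T + b_T$ and reindexing the products. Your reading of the hypothesis as the recursion $x_{t+1}\le a_t x_t + b_t$ matches how the paper actually uses the proposition.
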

\begin{proof}
Let's prove it by induction. It is obvious that this inequality holds for $T=2$:
$$
x_2 = a_1x_1 + b_1 = a_1a_0x_0 + a_1b_0 + b_1.
$$
Assume this inequality holds for $T$, then 
\begin{align*}
    x_{T+1} \leq & a_T\left[\left(\prod_{t = 0}^{T-1}a_t\right)x_0 + \sum_{t=0}^{T-2}\left(\prod_{i=t+1}^{T-1}a_i\right)b_t + b_{T-1} \right] + b_T \\
    = & \left(\prod_{t = 0}^{T}a_t\right)x_0 + \sum_{t=0}^{T-1}\left(\prod_{i=t+1}^{T}a_i\right)b_t + b_{T}.
\end{align*}
\end{proof}

\begin{lemma} \label{prop sum recursion}
Assume $x_t > 0$, for $t = 0, 1, 2,...$, and $x_{t+1} = a_1x_t/(t+1) + a_2/(t+1)$, then we have
$$
\sum_{t=0}^{T}x_t \leq a_2(1+ \log T) + a_2e^{a_1} + x_0e^{a_1}.
$$
\end{lemma}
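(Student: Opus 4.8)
The plan is to unroll the linear recursion into a closed form for $x_t$ and then sum term by term. First I would prove by induction on $t$ that
\begin{equation*}
x_t = \frac{a_1^t}{t!}\,x_0 + a_2\sum_{i=0}^{t-1}\frac{a_1^i\,(t-1-i)!}{t!},
\end{equation*}
where the inductive step follows by plugging this into $x_{t+1} = (a_1 x_t + a_2)/(t+1)$ and reindexing $i\mapsto i+1$, so that the stray $+a_2$ becomes the new $i=0$ term of the sum. Summing this identity over $t=0,\dots,T$ separates the bound into the $x_0$ part, $x_0\sum_{t=0}^T a_1^t/t! \le x_0 e^{a_1}$, and the $a_2$ part, $a_2\sum_{t=1}^{T}\sum_{i=0}^{t-1} a_1^i (t-1-i)!/t!$.

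For the $a_2$ part I would exchange the order of summation to obtain $a_2\sum_{i=0}^{T-1} a_1^i\sum_{t=i+1}^{T}\frac{(t-1-i)!}{t!}$ and handle $i=0$ and $i\ge 1$ separately. The $i=0$ term is $a_2\sum_{t=1}^{T}\frac1t \le a_2(1+\log T)$. For $i\ge 1$, the substitution $m=t-1-i$ rewrites the inner sum as $\sum_{m\ge 0}\frac{m!}{(m+i+1)!}=\sum_{m\ge 0}\frac{1}{(m+1)(m+2)\cdots(m+i+1)}$, which telescopes: with $g(m)=\big((m+1)(m+2)\cdots(m+i)\big)^{-1}$ one checks that $g(m)-g(m+1)=i\cdot\frac{m!}{(m+i+1)!}$, so the inner sum is at most $g(0)/i = 1/(i\cdot i!)$. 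Thus the $i\ge 1$ contribution is at most $a_2\sum_{i\ge 1}\frac{a_1^i}{i\cdot i!}\le a_2\sum_{i\ge 1}\frac{a_1^i}{i!}=a_2(e^{a_1}-1)$, and adding the three pieces gives $\sum_{t=0}^T x_t\le x_0 e^{a_1}+a_2(1+\log T)+a_2(e^{a_1}-1)$, which is the stated bound.

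The induction and the elementary series $\sum_{t\ge 0} a_1^t/t!=e^{a_1}$ are routine; the step that needs care is the telescoping estimate $\sum_{m\ge 0} m!/(m+i+1)! \le 1/(i\cdot i!)$. A cruder bound such as $m!/(m+i+1)!\le 1/i!$ would only yield $a_2 e^{a_1}(1+\log T)$, whereas the sharper $1/(i\cdot i!)$ is summable against $a_1^i$ to a quantity independent of $T$, which is exactly what makes the $\log T$ term appear with coefficient $a_2$ rather than $a_2 e^{a_1}$. An alternative route would be to induct directly on the partial sums using the relation $(t+1)x_{t+1}=a_1 x_t + a_2$, but the closed-form approach above looks cleanest.
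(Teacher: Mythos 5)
Your proof is correct and follows essentially the same route as the paper's: both unroll the recursion (you via an explicit closed form for $x_t$, the paper via its generic Proposition on recursions), split off the harmonic term $a_2\sum_t 1/t$ and the $x_0\sum_t a_1^t/t!$ term, and bound the remaining double sum by exchanging/reorganizing the summation and telescoping products of consecutive reciprocals down to $\sum_k a_1^k/(k\cdot k!)\le e^{a_1}$. Your telescoping identity $g(m)-g(m+1)=i\cdot m!/(m+i+1)!$ is the same algebraic step the paper uses (with the roles of the two indices swapped), so the arguments coincide up to bookkeeping, with your version even saving an additive $a_2$.
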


\begin{proof}
By Proposition \ref{prop recursion}, we have 
\begin{align} \nonumber
    \sum_{t=0}^{T}x_t \leq & x_0 + \left(a_1x_0 +  a_2\right) + \sum_{t=2}^{T} \left[\left(\prod_{i = 0}^{t-1}\frac{a_1}{i+1}\right)x_0 + \sum_{i=0}^{t-2}\left(\prod_{j=i+1}^{t-1}\frac{a_1}{j+1}\right)\frac{a_2}{i+1} + \frac{a_2}{t} \right] \\ \label{sum recursion 1}
    = & x_0 + x_0\sum_{t=1}^T\prod_{i = 0}^{t-1}\frac{a_1}{i+1} + \sum_{t=2}^{T} \left[ \sum_{i=0}^{t-2}\left(\prod_{j=i+1}^{t-1}\frac{a_1}{j+1}\right)\frac{a_2}{i+1}\right] + \sum_{t=1}^{T}\frac{a_2}{t}. 
\end{align}
We note that $\sum_{t=1}^{T}\frac{a_2}{t} \leq a_2(1+ \log T)$ and 
\begin{equation*}
    x_0\sum_{t=1}^T\prod_{i = 0}^{t-1}\frac{a_1}{i+1} = x_0\sum_{t=1}^T\frac{a_1^t}{t!} \leq x_0e^{a_1},
\end{equation*}
where the last inequality can be derived from Taylor expansion of exponential function. Then it remains to bound the third term on the right hand side of (\ref{sum recursion 1}). We can upper bound it by noticing
\begin{align} \nonumber
    \sum_{t=2}^{T} \left[ \sum_{i=0}^{t-2}\left(\prod_{j=i+1}^{t-1}\frac{a_1}{j+1}\right)\frac{a_2}{i+1}\right] = & a_2 \sum_{t=1}^{T-1}\sum_{i=1}^{T-t}\left(\prod_{j=i}^{i+t-1}\frac{a_1}{j+1} \right)\frac{1}{i} \\ \nonumber
    = &  a_2 \sum_{t=1}^{T-1} a_1^t \sum_{i=1}^{T-t}\prod_{j=i}^{i+t}\frac{1}{j} \\ \nonumber
    = &  a_2 \sum_{t=1}^{T-1} a_1^t \sum_{i=1}^{T-t}\frac{1}{t}\left(\prod_{j=i}^{i+t-1}\frac{1}{j} - \prod_{j=i+1}^{i+t}\frac{1}{j} \right) \\ \nonumber
    = & a_2 \sum_{t=1}^{T-1} \frac{a_1^t}{t}\left(\prod_{j=1}^t \frac{1}{j} - \prod_{j= T-t+1}^T \frac{1}{j} \right) \leq a_2 \sum_{t=1}^{T-1} \frac{a_1^t}{t\cdot(t!)} \leq a_2e^{a_1},
\end{align}
where in the third equality we use $\frac{1}{t}\left(\prod_{j=i}^{i+t-1}\frac{1}{j} - \prod_{j=i+1}^{i+t}\frac{1}{j} \right) = \prod_{j=i}^{i+t}\frac{1}{j}$, the last inequality can be derived from Taylor expansion of exponential function; and to see the first equality, the left hand side is the sum of the following
\begin{equation*}
\begin{matrix}
     a_2\times \frac{a_1}{2} & & \\
     a_2\times \frac{a_1}{2} \times \frac{a_1}{3} & \frac{a_2}{2}\times\frac{a_1}{3} & \\
     a_2\times \frac{a_1}{2} \times \frac{a_1}{3} \times \frac{a_1}{4} & \frac{a_2}{2}\times\frac{a_1}{3} \times \frac{a_1}{4} & \frac{a_2}{3} \times \frac{a_1}{4}  \\
    \vdots & \vdots & & \ddots  &  \\
     a_2\times \frac{a_1}{2} \times \dots \times \frac{a_1}{T-1} & \frac{a_2}{2} \times \frac{a_1}{3} \times \dots \times  \frac{a_1}{T-1} & & & \frac{a_2}{T-2}\times \frac{a_1}{T-1} \\
      a_2\times \frac{a_1}{2} \times \dots \times \frac{a_1}{T} & \frac{a_2}{2} \times \frac{a_1}{3} \times \dots \times  \frac{a_1}{T} & & & \frac{a_2}{T-2}\times \frac{a_1}{T-1} \times \frac{a_1}{T} & \frac{a_2}{T-1}\times \frac{a_1}{T},
\end{matrix}
\end{equation*}
and on the right hand side we sum them by each diagonal.

\end{proof}

\subsection{Proofs for Section~\ref{sec::nonconvergence}}

\begin{proof}[Proof for Lemma \ref{lemma:nonconverge}]
Note that $\nabla_x f(x, y) = -L^2x + Ly$ and $\nabla_y f(x, y) = Lx - y$. Then we have
\begin{align*}
    \nabla_x f(x_{t+1}, y_{t+1}) & = -L^2 x_{t+1} + L y_{t+1} \\
   & = -L^2\left[ x_t - \frac{\eta^x}{\sqrt{v_t^x}} m_t^x \right] + L\left[y_t + \frac{r\eta^x}{\sqrt{v_t^y}} m_t^y \right] \\
   & = -L^2 x_{t} + Ly_{t} +  \frac{L^2\eta^x}{\sqrt{v_t^x}}m_t^x + \frac{Lr\eta^x}{\sqrt{v_t^y}} m_t^y . 
\end{align*}
\paragraph{GDA.} With $v_t^x = v_t^y = 1$, $m_t^x = -L^2x_t + Ly_t$ and $m_t^y = Lx_t- y_t$,
\begin{align*}
    \nabla_x f(x_{t+1}, y_{t+1}) & = -L^2 x_{t} + Ly_{t} + L^2\eta^x(-L^2x_t + Ly_t) + Lr\eta^x ( Lx_t- y_t) \\
    & = (-L^2x_t + Ly_t)(1 + L^2\eta^x - r\eta^x) \\
    & = (1 + L^2\eta^x - r\eta^x)\nabla_x f(x_t, y_t).
\end{align*}
\paragraph{Adaptive methods.} Note that $(g_t^x)^2 = L^2(g_t^y)^2$, so by our assumption, $v_t^x = L^2 v_t^y$ for all $t$. Also, with $\beta^x = \beta^y$, we have
\begin{align*}
    m_t^x + rm_t^y & = \beta^xm^x_{t-1} + (1-\beta^x)(-L^2x_t + Ly_t) + r\beta^x m^y_{t-1} + r(1-\beta^x)(Lx_t - y_t) \\
    & = \beta^x(m_{t-1}^x + rm_{t-1}^y) + \left(1-\frac{r}{L}\right)(1-\beta^x)\nabla_x f(x_t, y_t). 
\end{align*}
Recursing this with 
\begin{equation*}
    \nabla_x f(x_{t+1}, y_{t+1}) = \nabla_x f(x_t, y_t) + \frac{L\eta^x}{\sqrt{v_t^y}}(m_t^x + rm_t^y), \text{ and } \quad m_0^x = m_0^y = 0,
\end{equation*}
 when $r \leq L$ we have
$$
  \nabla_x f(x_T, y_T) \geq \nabla_x f(x_0, y_0) \prod_{t=0}^{T-1}\left[ 1 + \frac{L\eta^x}{\sqrt{v_t^x}}(1 - \beta^x)(L - r) \right].
$$
\paragraph{Averaged and best iterate.} We note that the distance from a point $(x, y)$ to the line $y = Lx$, the set of stationary point, is $\frac{|Lx- y|}{\sqrt{L^2 + 1}}$ that is proportional to $|\nabla_x f(x,y)|$. Therefore, the iterate converges to the set of stationary point if and only if the gradient about $x$ converges to $0$. This also explains the best iterate will not converge to the set of stationary point for GDA with $r \leq L^2$ and for adaptive methods with $r \leq L$. 
Average iterate will not converge under the same condition by observing that if an iterate $(x_t, y_t)$ is on the one side of the line $y = Lx$, the next iterate $(x_{t+1}, y_{t+1})$ will stay on the same side. Without loss of generality, assume $(x_t, y_t)$ is on the right of the line $y = Lx$, i.e., $y_t < Lx_t$. By the update of GDA, 
$$
x_{t+1} = x_t + \eta^x(L^2x_t - Ly_t), \quad y_{t+1} = y_t + r\eta^x(Lx_t - y_t),
$$
we have $y_{t+1} < Lx_{t+1}$ as $r \leq L^2$. For adaptive methods, by the recursion of $m_t^x$ and $m_t^y$, if $y_s < Lx_s$ for all $s \leq t$, we have $-m_t^x > Lm_t^y$. The update of adaptive methods can be written as:
$$
x_{t+1} = x_t + \frac{\eta^x}{L\sqrt{v_t^y}}(-m_t^x), \quad y_{t+1} = y_t + \frac{r\eta^x}{\sqrt{v_t^y}}m_t^y.
$$
Then $y_{t+1} < Lx_{t+1}$ as $r \leq L^2$. Now we conclude that the iterate will always stay on the one side of line $y = Lx$.

\end{proof}

\section{Proofs for Section~\ref{sec::convergence}}
\label{apdx:sec3}

\subsection{Proofs for NeAda-AdaGrad}

\paragraph{Proofs for \Cref{lemma stoc}}

\begin{proof}
Part of the proof is motivated by~\citep{ward2019adagrad}. By the smoothness of $\Phi$ from \Cref{lemma:smooth}, we have
\begin{align*}
    \Phi(x_{t+1}) & \leq  \Phi(x_t) + \langle \nabla\Phi(x_t), x_{t+1}-x_t\rangle + \kappa l \|x_{t+1}-x_t\|^2  \\
    & =  \Phi(x_t) - \left\langle \nabla\Phi(x_t), \frac{\eta}{\sqrt{v_{t+1}}} \left( \frac{1}{M}\sum_i\nabla_x f(x_t, y_t; \xi^i_t)\right)\right\rangle + \frac{\kappa l\eta^2}{v_{t+1}} \left\|  \frac{1}{M}\sum_i\nabla_x f(x_t, y_t; \xi^i_t)\right\|^2.
\end{align*}
Note that 
\begin{equation*} 
    \mathbb{E}_{\xi_t}\left[ \frac{\left\langle\nabla\Phi(x_t), \nabla_x f(x_t, y_t) -  \frac{1}{M}\sum_i\nabla_x f(x_t, y_t; \xi^i_t) \right\rangle}{\sqrt{v_t + \|\nabla_x f(x_t, y_t)\|^2 + \sigma^2/M}} \right] = 0.
\end{equation*}
Therefore, 
\begin{align} \nonumber
     & \fakeeq \mathbb{E}_{\xi_t}\left[\frac{\Phi(x_{t+1}) - \Phi(x_t)}{\eta} \right] \\ \nonumber
     & \leq \mathbb{E}_{\xi_t}\left[\left(\frac{1}{\sqrt{v_t + \|\nabla_x f(x_t, y_t)\|^2 + \sigma^2/M}} - \frac{1}{\sqrt{v_{t+1}}}\right) \left\langle\nabla\Phi(x_t),  \frac{1}{M}\sum_i\nabla_x f(x_t, y_t; \xi^i_t) \right\rangle\right] - \\ \label{primal function bound}
     & \fakeeq  \frac{\left\langle\nabla\Phi(x_t), \nabla_x f(x_t, y_t) \right\rangle}{\sqrt{v_t + \|\nabla_x f(x_t, y_t)\|^2 + \sigma^2/M}} + \kappa l \eta  \mathbb{E}_{\xi_t} \left[\frac{\left\|\frac{1}{M}\sum_i\nabla_x f(x_t, y_t; \xi^i_t)\right\|^2}{v_{t+1}}\right].
\end{align}
Now we want to bound the first term on the right hand side and let's denote it as $K$. First we note that
\begin{align*}
    & \fakeeq \left\| \frac{1}{\sqrt{v_t + \|\nabla_x f(x_t, y_t)\|^2 + \sigma^2/M}} - \frac{1}{\sqrt{v_{t+1}}} \right\| \\ 
    & \leq  \left\| \frac{\sqrt{v_{t+1}} - \sqrt{v_t + \|\nabla_x f(x_t, y_t)\|^2 + \sigma^2/M}}{\sqrt{v_{t+1}}\sqrt{v_t + \|\nabla_x f(x_t, y_t)\|^2 + \sigma^2/M}} \right\| \\
    & =  \left\| \frac{\left(\sqrt{v_{t+1}} - \sqrt{v_t + \|\nabla_x f(x_t, y_t)\|^2 + \sigma^2/M}\right)\left(\sqrt{v_{t+1}} + \sqrt{v_t + \|\nabla_x f(x_t, y_t)\|^2 + \sigma^2/M}\right)}{\sqrt{v_{t+1}}\sqrt{v_t + \|\nabla_x f(x_t, y_t)\|^2 + \sigma^2/M}\left(\sqrt{v_{t+1}} + \sqrt{v_t + \|\nabla_x f(x_t, y_t)\|^2 + \sigma^2/M}\right)} \right\| \\
    & = \left\| \frac{\left\|\frac{1}{M}\sum_i\nabla_x f(x_t, y_t; \xi^i_t)\right\|^2 - \|\nabla_x f(x_t, y_t)\|^2 - \sigma^2/M}{\sqrt{v_{t+1}}\sqrt{v_t + \|\nabla_x f(x_t, y_t)\|^2 + \sigma^2/M}\left(\sqrt{v_{t+1}} + \sqrt{v_t + \|\nabla_x f(x_t, y_t)\|^2 + \sigma^2/M}\right)} \right\| \\
    & = \left\| \frac{\left(\left\|\frac{1}{M}\sum_i\nabla_x f(x_t, y_t; \xi^i_t)\right\| - \|\nabla_x f(x_t, y_t)\|\right) \left(\left\|\frac{1}{M}\sum_i\nabla_x f(x_t, y_t; \xi^i_t)\right\| + \|\nabla_x f(x_t, y_t)\|\right) - \sigma^2/M}{\sqrt{v_{t+1}}\sqrt{v_t + \|\nabla_x f(x_t, y_t)\|^2 + \sigma^2/M}\left(\sqrt{v_{t+1}} + \sqrt{v_t + \|\nabla_x f(x_t, y_t)\|^2 + \sigma^2/M}\right)} \right\| \\
    & \leq  \max \bigg\{ \frac{\left| \left\|\frac{1}{M}\sum_i\nabla_x f(x_t, y_t; \xi^i_t)\right\| - \|\nabla_x f(x_t, y_t)\| \right|}{\sqrt{v_{t+1}}\sqrt{v_t + \|\nabla_x f(x_t, y_t)\|^2 + \sigma^2 / M}},  \frac{\sigma/\sqrt{M}}{\sqrt{v_{t+1}}\sqrt{v_t + \|\nabla_x f(x_t, y_t)\|^2 + \sigma^2 / M}} \bigg\}, 
\end{align*}
where in the second equality we use the definition of $v_t$. Therefore we have 
\begin{align} \nonumber
    K & \leq \max \Bigg\{ \mathbb{E}_{\xi_t} \left[ \frac{\left| \left\|\frac{1}{M}\sum_i\nabla_x f(x_t, y_t; \xi^i_t)\right\| - \|\nabla_x f(x_t, y_t)\| \right| \|\nabla \Phi(x_t)\|\left\|\frac{1}{M}\sum_i\nabla_x f(x_t, y_t; \xi^i_t)\right\|^2 }{\sqrt{v_{t+1}}\sqrt{v_t + \|\nabla_x f(x_t, y_t)\|^2 + \sigma^2/M}} \right],  \\ \label{K bound}
    & \fakeeq \mathbb{E}_{\xi_t} \left[ \frac{\frac{\sigma}{\sqrt{M}} \|\nabla \Phi(x_t)\|\left\|\frac{1}{M}\sum_i\nabla_x f(x_t, y_t; \xi^i_t)\right\|^2 }{\sqrt{v_{t+1}}\sqrt{v_t + \|\nabla_x f(x_t, y_t)\|^2 + \sigma^2/M}} \right] \Bigg\}.
\end{align}
 By Young's inequality $ab \leq \frac{1}{4\lambda} a^2 + \lambda b^2$ with $\lambda = \frac{\sigma^2/M}{\sqrt{v_t + \|\nabla_x f(x_t, y_t)\|^2 + \sigma^2/M}}$, $a =  \frac{\left| \left\|\frac{1}{M}\sum_i\nabla_x f(x_t, y_t; \xi^i_t)\right\| - \|\nabla_x f(x_t, y_t)\| \right| \|\nabla \Phi(x_t)\|}{\sqrt{v_t + \|\nabla_x f(x_t, y_t)\|^2 + \sigma^2/M}}$ and $b = \frac{\left\|\frac{1}{M}\sum_i\nabla_x f(x_t, y_t; \xi^i_t)\right\|}{\sqrt{v_{t+1}}} $, the first term on the right hand side of (\ref{K bound}) can be upper bounded by 
 \begin{align*}
    & \fakeeq \mathbb{E}_{\xi_t} \left[\frac{\sqrt{v_t + \|\nabla_x f(x_t, y_t)\|^2 + \sigma^2/M}}{4\sigma^2/M}\left( \frac{\left| \left\|\frac{1}{M}\sum_i\nabla_x f(x_t, y_t; \xi^i_t)\right\| - \|\nabla_x f(x_t, y_t)\| \right| \|\nabla \Phi(x_t)\|}{\sqrt{v_t + \|\nabla_x f(x_t, y_t)\|^2 + \sigma^2/M}} \right)^2  \right] + \\ 
    & \fakeeq \mathbb{E}_{\xi_t} \left[\frac{\sigma^2/M}{\sqrt{v_t + \|\nabla_x f(x_t, y_t)\|^2 + \sigma^2/M}} \left(\frac{\left\|\frac{1}{M}\sum_i\nabla_x f(x_t, y_t; \xi^i_t)\right\|}{\sqrt{v_{t+1}}} \right)^2 \right] \\
    & \leq \frac{\|\nabla \Phi(x_t)\|^2\mathbb{E}_{\xi_t} \left\|\frac{1}{M}\sum_i\nabla_x f(x_t, y_t; \xi^i_t) - \nabla_x f(x_t, y_t)\right\|^2}{\frac{4\sigma^2}{M}\sqrt{v_t + \|\nabla_x f(x_t, y_t)\|^2 + \sigma^2/M}} + \frac{\sigma}{\sqrt{M}}\mathbb{E}_{\xi_t} \left[\frac{\left\|\frac{1}{M}\sum_i\nabla_x f(x_t, y_t; \xi^i_t)\right\|^2}{v_{t+1}} \right] \\ 
    & \leq  \frac{\|\nabla \Phi(x_t)\|^2}{4\sqrt{v_t + \|\nabla_x f(x_t, y_t)\|^2 + \sigma^2/M}} + \frac{\sigma}{\sqrt{M}}\mathbb{E}_{\xi_t} \left[\frac{\left\|\frac{1}{M}\sum_i\nabla_x f(x_t, y_t; \xi^i_t)\right\|^2}{v_{t+1}} \right].  
 \end{align*}
 Similarly, by Young's Inequality with $\lambda = \frac{\sigma^2/M}{\sqrt{v_t + \|\nabla_x f(x_t, y_t)\|^2 + \sigma^2/M}}$, $a =  \frac{ \frac{\sigma}{\sqrt{M}} \|\nabla \Phi(x_t)\|}{\sqrt{v_t + \|\nabla_x f(x_t, y_t)\|^2 + \sigma^2/M}}$ and $b = \frac{\left\|\frac{1}{M}\sum_i\nabla_x f(x_t, y_t; \xi^i_t)\right\|}{\sqrt{v_{t+1}}} $, the second term on the right hand side of (\ref{K bound}) can be upper bounded by 
  \begin{align*}
    & \fakeeq \mathbb{E}_{\xi_t} \left[\frac{\sqrt{v_t + \|\nabla_x f(x_t, y_t)\|^2 + \sigma^2/M}}{4\sigma^2/M}\left( \frac{\frac{\sigma}{\sqrt{M}}\|\nabla \Phi(x_t)\|}{\sqrt{v_t + \|\nabla_x f(x_t, y_t)\|^2 + \sigma^2/M}} \right)^2  \right] + \\ 
    & \fakeeq \mathbb{E}_{\xi_t} \left[\frac{1}{\sqrt{v_t + \|\nabla_x f(x_t, y_t)\|^2 + \sigma^2/M}} \left(\frac{\left\|\frac{1}{M}\sum_i\nabla_x f(x_t, y_t; \xi^i_t)\right\|}{\sqrt{v_{t+1}}} \right)^2 \right] \\
    & \leq  \frac{\|\nabla \Phi(x_t)\|^2}{4\sqrt{v_t + \|\nabla_x f(x_t, y_t)\|^2 + \sigma^2/M}} + \frac{\sigma}{\sqrt{M}}\mathbb{E}_{\xi_t} \left[\frac{\left\|\frac{1}{M}\sum_i\nabla_x f(x_t, y_t; \xi^i_t)\right\|^2}{v_{t+1}} \right].  
 \end{align*}
 Therefore, 
 \begin{equation*}
     K \leq \frac{\|\nabla \Phi(x_t)\|^2}{4\sqrt{v_t + \|\nabla_x f(x_t, y_t)\|^2 + \sigma^2/M}} + \frac{\sigma}{\sqrt{M}}\mathbb{E}_{\xi_t} \left[\frac{\left\|\frac{1}{M}\sum_i\nabla_x f(x_t, y_t; \xi^i_t)\right\|^2}{v_{t+1}} \right].  
 \end{equation*}
Plugging this into (\ref{primal function bound}), 
\begin{align} \nonumber
     & \fakeeq \mathbb{E}_{\xi_t}\left[\frac{\Phi(x_{t+1}) - \Phi(x_t)}{\eta} \right] \\ \nonumber
     & \leq  \frac{\|\nabla \Phi(x_t)\|^2}{4\sqrt{v_t + \|\nabla_x f(x_t, y_t)\|^2 + \sigma^2/M}} + \frac{\sigma}{\sqrt{M}}\mathbb{E}_{\xi_t} \left[\frac{\left\|\frac{1}{M}\sum_i\nabla_x f(x_t, y_t; \xi^i_t)\right\|^2}{v_{t+1}} \right] - \\  \nonumber
     & \fakeeq  \frac{\left\langle\nabla\Phi(x_t), \nabla_x f(x_t, y_t) \right\rangle}{\sqrt{v_t + \|\nabla_x f(x_t, y_t) \|^2 + \sigma^2/M}} + \kappa l \eta  \mathbb{E}_{\xi_t} \left[\frac{\left\|\frac{1}{M}\sum_i\nabla_x f(x_t, y_t; \xi^i_t)\right\|^2}{v_{t+1}}\right] \\ \nonumber
     & \leq  \left( \frac{\sigma}{\sqrt{M}} + \kappa l \eta\right)  \mathbb{E}_{\xi_t} \left[\frac{\left\|\frac{1}{M}\sum_i\nabla_x f(x_t, y_t; \xi^i_t)\right\|^2}{v_{t+1}}\right] 
     - \frac{\|\nabla_x f(x_t, y_t)\|^2}{2\sqrt{v_t + \|\nabla_x f(x_t, y_t)\|^2 + \sigma^2/M}} + \\
     & \fakeeq \frac{\|\nabla_x f(x_t, y_t) - \nabla \Phi(x_t)\|^2 }{2\sqrt{v_t + \|\nabla_x f(x_t, y_t) \|^2 + \sigma^2/M}},
\end{align}
where in the second inequality we use $\|a\|^2/4 - \langle a, b \rangle \leq -\|b\|^2/2 + \|a-b\|^2/2 $. Apply the total law of probability,
\begin{align} \nonumber
    & \fakeeq\frac{1}{2}\sum_{t=0}^{T-1}\mathbb{E}\left[\frac{\|\nabla_x f(x_t, y_t)\|^2}{\sqrt{v_t + \|\nabla_x f(x_t, y_t)\|^2 + \sigma^2/M}} \right] \\ \nonumber
    & \leq  \frac{\Phi(x_0) - \min_x \Phi(x)}{\eta} + \left( \frac{\sigma}{\sqrt{M }} + \kappa l \eta\right) \mathbb{E} \sum_{t=0}^{T-1} \left[\frac{\left\|\frac{1}{M}\sum_i\nabla_x f(x_t, y_t; \xi^i_t)\right\|^2}{v_{t+1}}\right] + \\ \label{grad bound}
    & \fakeeq \mathbb{E} \sum_{t=0}^{T-1} \frac{\|\nabla_x f(x_t, y_t) - \nabla \Phi(x_t)\|^2 }{2\sqrt{v_t + \|\nabla_x f(x_t, y_t) \|^2 + \sigma^2/M}}. 
\end{align}
Denote 
\begin{align*}
    & Z \triangleq \sum_{t=0}^{T-1} \|\nabla_x f(x_t, y_t)\|^2, \quad C \triangleq \sum_{t=0}^{T-1}\mathbb{E}\left[\frac{\|\nabla_x f(x_t, y_t)\|^2}{\sqrt{v_t + \|\nabla_x f(x_t, y_t)\|^2 + \sigma^2/M}} \right], \\
    & D \triangleq \mathbb{E} \sum_{t=0}^{T-1} \left[\frac{\left\|\frac{1}{M}\sum_i\nabla_x f(x_t, y_t; \xi^i_t)\right\|^2}{v_{t+1}}\right], \quad Q \triangleq \mathbb{E} \sum_{t=0}^{T-1} \frac{\|\nabla_x f(x_t, y_t) - \nabla \Phi(x_t)\|^2 }{2\sqrt{v_t + \|\nabla_x f(x_t, y_t) \|^2 + \sigma^2/M}}.
\end{align*}
By \Cref{lemma:bound_sum} with $\alpha=1$,
\begin{align*}
    D \leq & \mathbb{E} \left[1+ \log\left( 1+ \sum_{t=0}^{T-1} \frac{\left\|\frac{1}{M}\sum_i\nabla_x f(x_t, y_t; \xi^i_t)\right\|^2}{v_0}\right) \right] \\
    \leq & 1 + \mathbb{E}\left[\log\left(1 + \frac{\sum_{t=0}^{T-1}\left\|f(x_t, y_t; \xi^i_t)\right\|^2 + \sum_{t=0}^{T-1}\left\|\frac{1}{M}\sum_i\nabla_x f(x_t, y_t; \xi^i_t) - \nabla_x f(x_t, y_t)\right\|^2}{v_0} \right) \right] \\
    \leq & 1 + 2\mathbb{E}\left[\log\left(1 + \frac{Z + \sum_{t=0}^{T-1}\left\|\frac{1}{M}\sum_i\nabla_x f(x_t, y_t; \xi^i_t) - \nabla_x f(x_t, y_t)\right\|^2}{v_0} \right)^{1/2} \right] \\
    \leq & 1 + 2 \mathbb{E}\left[\log\left(1 + \frac{\sqrt{Z}}{\sqrt{v_0}} + \frac{\sqrt{\sum_{t=0}^{T-1}\left\|\frac{1}{M}\sum_i\nabla_x f(x_t, y_t; \xi^i_t) - \nabla_x f(x_t, y_t)\right\|^2}}{\sqrt{v_0}} \right)\right] \\
    \leq & 1 + 2\log\left(1 + \frac{\mathbb{E}[\sqrt{Z}] }{\sqrt{v_0}} +\frac{\mathbb{E}\left[\sqrt{\sum_{t=0}^{T-1}\left\|\frac{1}{M}\sum_i\nabla_x f(x_t, y_t; \xi^i_t) - \nabla_x f(x_t, y_t)\right\|^2}\right]}{\sqrt{v_0}} \right) \\
    \leq & 1 + 2\log\left(1 + \frac{\mathbb{E}[\sqrt{Z}] }{\sqrt{v_0}} +\frac{\sqrt{\sum_{t=0}^{T-1} \sigma^2/M}}{\sqrt{v_0}} \right) \leq 1 + 2\log\left(1 + \frac{\mathbb{E}[\sqrt{Z}] }{\sqrt{v_0}} +\frac{\sqrt{T}\sigma}{\sqrt{v_0M }} \right), 
\end{align*}
where in  the fourth inequality we use $(a+b)^{1/2} \leq a^{1/2} + b^{1/2}$ with $a,b \geq 0$,  the fifth and sixth inequalities are from Jensen's inequality. Also, by $l$-smoothness of $f$,
\begin{equation} \label{q expression}
   Q = \mathbb{E} \sum_{t=0}^{T-1} \frac{\|\nabla_x f(x_t, y_t) - \nabla \Phi(x_t)\|^2 }{2\sqrt{v_t + \|\nabla_x f(x_t, y_t) \|^2 + \sigma^2/M}} \leq  \mathbb{E}\left[ \sum_{t=0}^{T-1} \frac{l^2\|y_t - y^*(x_t)\|^2 }{2\sqrt{v_0}}\right] \triangleq \mathcal{E}.
\end{equation}
 Also,
\begin{align*}
    C \geq & \sum_{t=0}^{T-1}\mathbb{E}\left[\frac{\|\nabla_x f(x_t, y_t)\|^2}{\sqrt{v_0 + \sum_{k = 0}^{T-2} \left\|\frac{1}{M}\sum_i\nabla_x f(x_k, y_k; \xi^i_k)\right\|^2 + \sum_{j=0}^{T-1} \|\nabla_x f(x_j, y_j)\|^2 + \sigma^2/M}} \right] \\
    \geq &  \sum_{t=0}^{T-1}\mathbb{E}\left[\frac{\|\nabla_x f(x_t, y_t)\|^2}{\sqrt{v_0 + 3\sum_{j=0}^{T-1} \|\nabla_x f(x_j, y_j)\|^2 + 2\sum_{k=0}^{T-1} \|\nabla_x f(x_k, y_k) - \frac{1}{M}\sum_i\nabla_x f(x_k, y_k; \xi^i_k)\|^2 + \sigma^2/M}} \right] \\
    \geq & \mathbb{E}\left[\frac{Z}{\sqrt{v_0 + 3Z + 2\sum_{k=0}^{T-1} \|\nabla_x f(x_k, y_k) - \frac{1}{M}\sum_i\nabla_x f(x_k, y_k; \xi^i_k)\|^2 + \sigma^2/M }} \right] \\
    \geq & \frac{\left(\mathbb{E}[\sqrt{Z}]\right)^2}{\mathbb{E}\left[\sqrt{v_0 + 3Z + 2\sum_{k=0}^{T-1} \|\nabla_x f(x_k, y_k) - \frac{1}{M}\sum_i\nabla_x f(x_k, y_k; \xi^i_k)\|^2 + \sigma^2/M }\right]} \\
    \geq & \frac{\left(\mathbb{E}[\sqrt{Z}]\right)^2}{\sqrt{v_0} + 3\mathbb{E}[\sqrt{Z}] + \sigma/\sqrt{M } + 2\sqrt{\sum_{t=1}^{T-1}\sigma^2/M}} \geq \frac{\left(\mathbb{E}[\sqrt{Z}]\right)^2}{\sqrt{v_0} + 3\mathbb{E}[\sqrt{Z}] + 2 \sigma \sqrt{T}/\sqrt{M } },
\end{align*}
where in the fourth inequality we use Holder's inequality, i.e. $\mathbb{E}[X^2]\geq \frac{(\mathbb{E}[XY])^2}{\mathbb{E}[Y^2]}$ with \\ $X = \left(\frac{Z}{\sqrt{v_0 + 3Z + 2\sum_{k=0}^{T-1} \|\nabla_x f(x_k, y_k) - \frac{1}{M}\sum_i\nabla_x f(x_k, y_k; \xi^i_k)\|^2 + \sigma^2/M }} \right)^{1/2}$ and \\ $Y = \left(v_0 + 3Z + 2\sum_{k=0}^{T-1} \|\nabla_x f(x_k, y_k) - \frac{1}{M}\sum_i\nabla_x f(x_k, y_k; \xi^i_k)\|^2 + \sigma^2/M  \right)^{1/4}$, and in the fifth inequality we use $(a+b)^{1/2} \leq a^{1/2} + b^{1/2}$ and Jensen's inequality. Plugging the bounds for $C, D$ and $Q$ into (\ref{grad bound}), 
\begin{align} \nonumber
    & \fakeeq \frac{\left(\mathbb{E}[\sqrt{Z}]\right)^2}{\sqrt{v_0} + 3\mathbb{E}[\sqrt{Z}] + 2 \sigma \sqrt{T}/\sqrt{M } }  \\ \label{Z bound}
    & \leq  \frac{2\left(\Phi(x_0) - \min_x \Phi(x)\right)}{\eta} + \left( \frac{4\sigma}{\sqrt{M }} + 2\kappa l \eta\right)\left[1 + 2\log\left(1 + \frac{\mathbb{E}[\sqrt{Z}] }{\sqrt{v_0}} +\frac{\sigma\sqrt{T}}{\sqrt{v_0}\sqrt{M }} \right)\right] + \mathcal{E}.
\end{align}
Now we want to solve for $\mathbb{E}[\sqrt{Z}]$. Denote $\Delta = \Phi(x_0) - \min_x \Phi(x)$. By Proposition \ref{prop log bound}, we have
\begin{equation*}
     \mathbb{E}[\sqrt{Z}] \leq \frac{\sqrt{v_0}}{3} + \frac{432\Delta^2}{\eta^2} + \frac{2\sigma\sqrt{T}}{3\sqrt{M }} + 432\left(1+\frac{32}{\sqrt{v_0}}\right)\left(\kappa^2l^2\eta^2 + \frac{4\sigma^2}{M} \right) + 108\mathcal{E}^2.
\end{equation*}
We plug this loose upper bound into the logarithmic term on the right hand side of (\ref{Z bound}) and denote the right hand side as $A + \mathcal{E}$. Then we solve the inequality
$$ \frac{\left(\mathbb{E}[\sqrt{Z}]\right)^2}{\sqrt{v_0} + 2\mathbb{E}[\sqrt{Z}] + 2 \sigma \sqrt{T}/\sqrt{M } } \leq A + \mathcal{E},   $$
which gives rise to 
\begin{equation}
    \mathbb{E}[\sqrt{Z}] \leq 2(A + \mathcal{E}) + \left(v_0^{\frac{1}{4}} + 2\sigma^{\frac{1}{2}}T^{\frac{1}{4}}M ^{-\frac{1}{4}} \right)\sqrt{A + \mathcal{E}}.
\end{equation}
Note that 
$$A = \frac{2\Delta}{\eta} + \left( \frac{4\sigma}{\sqrt{M }} + 2\kappa l \eta\right) \left[ 1 + 2\log \left( \mathrm{Poly} \left(T, \mathcal{E}, \frac{\Delta}{\eta}, \frac{\sigma}{\sqrt{M }}, \kappa l\eta, v_0, \frac{1}{v_0} \right) \right) \right].$$

\end{proof}

\paragraph{Proof for \Cref{thm:deter}}

Now we state \Cref{thm:deter} in a more detailed way. 

\begin{theorem} [deterministic] 
Suppose we have a linearly-convergent subroutine $\mathcal{A}$ for maximizing any strongly concave function $h(\cdot)$:
$$\|y^k - y^*\|^2 \leq a_1(1-a_2)^k\|y^0 - y^*\|^2$$
where $y^k$ is $k$-th iterate, $y^*$ is the optimal solution, and $a_1>0$ and $0 < a_2 <1$ are constants that can depend on the parameters of $h$.

Under the same setting as  \Cref{lemma stoc} with $\sigma = 0$,  
for Algorithm \ref{alg::neada-adagrad} with subroutine $\mathcal{A}$ under criterion I: $\|y_t - \text{Proj}_{\mathcal{Y}}(y_t + \nabla_y f(x_t, y_t))\|^2 \leq \frac{1}{t+1}$,  and  $M = 1$,  there exists $t^* \leq  \widetilde{O}\left(\epsilon^{-2} \right)$ such that $(x_{t^*}, y_{t^*})$ is an $\epsilon$-stationary point. Therefore, the total gradient complexity is $\widetilde{O}\left( \epsilon^{-2} \right)$.
\end{theorem}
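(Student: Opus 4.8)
The plan is to specialize the master inequality of \Cref{lemma stoc} to $\sigma=0$, $M=1$ (where all expectations are vacuous), to show that stopping criterion I forces the cumulative inner suboptimality $\mathcal{E}$ to be only $\widetilde{O}(1)$, and then to read off a single near-stationary index from the resulting $O(1/\sqrt{T})$ bound. \emph{Step 1: bounding $\mathcal{E}$.} When the inner loop at outer iteration $t$ halts, criterion I gives $\|y_t-\mathrm{Proj}_{\mathcal{Y}}(y_t+\nabla_y f(x_t,y_t))\|^2\le \frac{1}{t+1}$. Since $f(x_t,\cdot)$ is $\mu$-strongly concave and $l$-smooth on $\mathcal{Y}$, the unit-step gradient-mapping norm controls the distance to the maximizer: $\|y_t-y^*(x_t)\|\le c_\mu\,\|y_t-\mathrm{Proj}_{\mathcal{Y}}(y_t+\nabla_y f(x_t,y_t))\|$ for a constant $c_\mu$ depending only on $\mu,l$, hence $\|y_t-y^*(x_t)\|^2\le \frac{c_\mu^2}{t+1}$. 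Therefore $\mathcal{E}=\sum_{t=0}^{T-1}\frac{l^2\|y_t-y^*(x_t)\|^2}{2\sqrt{v_0}}\le \frac{l^2 c_\mu^2}{2\sqrt{v_0}}\sum_{t=0}^{T-1}\frac{1}{t+1}=\widetilde{O}(1)$. One must also check the inner loop always terminates, so that this holds unconditionally: the warm start $y_{t-1}$ lies at a bounded distance from $y^*(x_t)$ (justified in Step 3), and linear convergence of $\mathcal{A}$ together with Lipschitzness of the gradient mapping drives that quantity below $\frac{1}{t+1}$ after finitely many inner steps.

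\emph{Step 2: the stationarity bound.} Plug $\mathcal{E}=\widetilde{O}(1)$ and $A=\widetilde{O}(\frac{\Phi(x_0)-\min_x\Phi(x)}{\eta}+\kappa l\eta)=\widetilde{O}(1)$ into \Cref{lemma stoc} with $\sigma=0$, $M=1$; this yields $\sqrt{\tfrac1T\sum_{t=0}^{T-1}\|\nabla_x f(x_t,y_t)\|^2}\le \frac{\widetilde{O}(1)}{\sqrt{T}}$, equivalently $\sum_{t=0}^{T-1}\|\nabla_x f(x_t,y_t)\|^2\le S$ with $S=\widetilde{O}(1)$ (hiding $\mathrm{polylog}\,T$). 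Restricting the sum to indices $t\ge T/2$, there is $t^*\ge T/2$ with $\|\nabla_x f(x_{t^*},y_{t^*})\|^2\le \frac{2S}{T}$, and for the same index $\|y_{t^*}-y^*(x_{t^*})\|^2\le \frac{c_\mu^2}{t^*+1}\le \frac{2c_\mu^2}{T}$ from Step 1. Taking $T=\widetilde{O}(\epsilon^{-2})$ so that $\frac{2\max(S,c_\mu^2)}{T}\le\epsilon^2$ makes $(x_{t^*},y_{t^*})$ an $\epsilon$-stationary point with $t^*\le T-1=\widetilde{O}(\epsilon^{-2})$.

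\emph{Step 3: gradient complexity.} From $\sum_s\|\nabla_x f(x_s,y_s)\|^2\le S$ we get $\|\nabla_x f(x_t,y_t)\|\le \sqrt S=\widetilde{O}(1)=:G$ along the entire trajectory, so $\|x_t-x_{t-1}\|\le \frac{\eta G}{\sqrt{v_0}}$; combined with the $\kappa$-Lipschitzness of $y^*(\cdot)$ (\Cref{lemma:smooth}) and the Step 1 estimate at $t-1$, we obtain $\|y_{t-1}-y^*(x_t)\|^2\le \frac{2c_\mu^2}{t}+\frac{2\kappa^2\eta^2G^2}{v_0}=:B=O(1)$ — this is the bounded-warm-start fact invoked in Step 1. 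Linear convergence $\|y^k-y^*(x_t)\|^2\le a_1(1-a_2)^k B$ and the $(2+l)$-Lipschitzness of the gradient mapping (which vanishes at $y^*(x_t)$) imply criterion I is met after at most $O(\frac{1}{a_2}\log((2+l)^2 a_1 B (t+1)))=O(\log t)$ inner iterations at step $t$. Summing over $t<T$ and adding the $T$ outer $x$-gradients gives $\widetilde{O}(T)=\widetilde{O}(\epsilon^{-2})$ gradient evaluations in total. The main obstacle is the circular-looking interdependence among these pieces: the $\widetilde{O}(1/\sqrt{T})$ rate needs $\mathcal{E}=\widetilde{O}(1)$, which needs the inner loop to terminate at the prescribed accuracy, which needs the warm-start distance — hence the outer gradients — bounded; so the argument must first establish unconditional termination and the $\mathcal{E}$ bound, feed these into \Cref{lemma stoc}, and only afterwards use the trajectory bounds to count inner iterations.
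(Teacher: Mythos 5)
Your proposal is correct and follows essentially the same route as the paper: bound $\mathcal{E}$ by $\widetilde{O}(1)$ via criterion I and the error bound $\|y_t-y^*(x_t)\|\le \frac{l+1}{\mu}\|y_t-\mathrm{Proj}_{\mathcal{Y}}(y_t+\nabla_y f(x_t,y_t))\|$, feed it into \Cref{lemma stoc} with $\sigma=0$, $M=1$, then bound the warm-start distance and count $O(\frac{1}{a_2}\log t)$ inner iterations of the linearly convergent subroutine. The only (cosmetic) difference is that the paper avoids your trajectory gradient bound $G=\sqrt{S}$ and the attendant circularity bookkeeping by noting $\|x_{t+1}-x_t\|=\eta\|\nabla_x f(x_t,y_t)\|/\sqrt{v_{t+1}}\le\eta$ since $v_{t+1}\ge\|\nabla_x f(x_t,y_t)\|^2$, which bounds the warm start directly.
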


\begin{proof}
For convenience, we denote $G_y(x, y) = \|y - \text{Proj}_{\mathcal{Y}}(y + \nabla_y f(x, y))\|$ as the gradient mapping about $y$ at $(x, y)$. From Theorem 3.1 in~\citep{pang1987posteriori} and Lemma 10.10 in~\citep{beck2017first}, we have $\frac{\mu}{l+1}\|y - y^*(x)\| \leq \|G_y(x, y)\| \leq (2+l)\|y - y^*(x)\|$. With criterion I, $\mathcal{E}$ can be bounded as the following
$$\mathcal{E}  \leq \mathbb{E}\left[ \sum_{t=0}^{T-1} \frac{l^2(l+1)^2\|G_y f(x_t, y_t)\|^2 }{2\mu^2\sqrt{v_0}}\right] \leq \frac{\kappa^2(l+1)^2}{2\sqrt{v_0}}\sum_{t=0}^{T-1} \frac{1}{t+1} \leq \frac{\kappa^2(l+1)^2(1+\log T)}{2\sqrt{v_0}}, $$
where in the first inequality we use the strong concavity. By setting $\sigma = 0$ and $M = 1$ in  \Cref{lemma stoc}, we have 
\begin{equation*}
    \frac{1}{T}\sum_{t=0}^{T-1} \|\nabla_x f(x_t, y_t)\|^2 \leq \frac{4(A + \mathcal{E})^2}{T} + \frac{\sqrt{v_0}(A + \mathcal{E})}{T} ,
\end{equation*}
where $ A + \mathcal{E} = \widetilde{\mathscr{O}}\left( \frac{\Phi(x_0) - \min_x \Phi(x)}{\eta} +2\sigma + \kappa l\eta + \frac{\kappa^2(l+1)^2}{\sqrt{v_0}} \right)$. We use $\mathscr{O}(\cdot)$ to include the problem parameters in $O(\cdot)$, and similarly $\widetilde{\mathscr{O}}(\cdot)$ ignores the logarithmic terms. Second, we need to compute the inner-loop complexity. At $(t+1)$-th inner loop, we need to bound the initial distance from $y_t$ to the optimal $y$ w.r.t $x_{t+1}$. 
\begin{align*}
    \| y_t - y^*(x_{t+1})\|^2 & \leq  2\|y_t - y^*(x_t)\|^2 + 2 \|y^*(x_t) - y^*(x_{t+1})\|^2 \\
    & \leq  \frac{2(l+1)^2}{\mu^2} \|G_y f(x_t, y_t)\|^2 + 2\kappa^2\|x_t - x_{t+1}\|^2 \\
    & \leq  \frac{2(l+1)^2}{\mu^2}\cdot \frac{1}{t+1} + \frac{2\kappa^2\eta^2}{v_{t+1}} \|\nabla_x f(x_t, y_t) \|^2 \leq \frac{2(l+1)^2}{\mu^2} + 2\kappa^2\eta^2,
\end{align*}
where in the second inequality we use \Cref{lemma:smooth}, and in the third we use $x_{t+1}$ update rule. Therefore  subroutine $\mathcal{A}$ takes $O\left(\frac{1}{a_2}\log(1/t)\right)$ iterations to find $y_{t+1}$ such that $\|G_y(x_{t+1}, y_{t+1})\|^2 \leq (2+l)^2\|y_{t+1} - y^*(x_{t+1})\|^2 \leq \frac{1}{t+2}$. Then we note that 
\begin{align*}
   \sum_{t=0}^{T-1} \|\nabla_x f(x_t, y_t)\|^2 + \|y_t - y^*(x_t)\|^2 & \leq \sum_{t=0}^{T-1} \|\nabla_x f(x_t, y_t)\|^2 + \frac{(l+1)^2}{\mu^2}\|G_y f(x_t, y_t)\|^2 \\
   & \leq 4(A + \mathcal{E})^2 + \sqrt{v_0}(A + \mathcal{E}) + \frac{(l+1)^2}{\mu^2}(1+ \log T).
\end{align*}
So there exists $t \leq \widetilde{\mathscr{O}}\left(\left((A + \mathcal{E})^2+ \sqrt{v_0}(A + \mathcal{E})  + (\kappa^2+1/\mu^2)\right)\epsilon^2\right)$ such that $\|\nabla_x f(x_t, y_t)\|\leq \epsilon$ and $\|y_t - y^*(x_t)\|\leq \epsilon$. Therefore the total complexity is $\widetilde{\mathscr{O}}\left(\left(\frac{(A + \mathcal{E})^2}{a_2} +\frac{ \sqrt{v_0}(A + \mathcal{E})}{a_2}  +\frac{(l+1)^2}{\mu^2a_2}\right)\epsilon^{-2} \right)$ with  $ A + \mathcal{E} = \widetilde{\mathscr{O}}\left( \frac{\Phi(x_0) - \min_x \Phi(x)}{\eta} +2\sigma + \kappa l\eta + \frac{\kappa^2(l+1)^2}{\sqrt{v_0}} \right)$.

\end{proof}

\begin{remark}
As long as we use the stopping criterion $\|y_t - \text{Proj}_{\mathcal{Y}}(y_t + \nabla_y f(x_t, y_t))\|^2 \leq \frac{1}{t+1}$, the exact same oracle complexity as above can be attained for the primal variable, regardless of the subroutine choice. The convergence rate of the subroutine (not necessarily linear rate) will only affect the oracle complexity of the dual variable. 

\end{remark}

\paragraph{Proof for \Cref{thm:stoc}}

Now we state \Cref{thm:stoc} in a more detailed way. Here we consider more general subroutines with $\widetilde{O}(1/k)$ convergence rate. When the subroutine has the convergence rate $O(1/k)$ without additional logarithmic terms, it reduces to the setting of \Cref{thm:stoc}. The proof of the theorem relies on \Cref{prop sum recursion}.

\begin{theorem} [stochastic] 
Suppose we have a sub-linearly-convergent subroutine $\mathcal{A}$ for maximizing any strongly concave function $h(\cdot)$: after $K = k\log^p(k)+1$ iterations
$$   \mathbb{E} \|y^K - y^*\|^2 \leq \frac{b_1\|y^0 - y^*\|^2 + b_2}{k},$$
where $y^k$ is $k$-th iterate, $y^*$ is the optimal solution, $p \in \mathbb{N}$ is an arbitrary non-negative integer and $b_1, b_2>0$ are constants that can depend on the parameters of $h$.

Under the same setting as  \Cref{lemma stoc},  for Algorithm \ref{alg::neada-adagrad} with $M = \epsilon^{-2}$ and subroutine $\mathcal{A}$ under the stopping criterion: at $t$-th inner loop the subroutine stops after $t\log^p(t) + 1$ steps,   there exists $t^* \leq \widetilde{O}\left(\epsilon^{-2} \right)$ such that $(x_{t^*}, y_{t^*})$ is an $\epsilon$-stationary point. Therefore, the total stochastic gradient complexity is 
$\widetilde{O}\left(\epsilon^{-4} \right).$
\end{theorem}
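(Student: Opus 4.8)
The plan is to run the whole argument on top of \Cref{lemma stoc}, so that the only substantive work is to bound the cumulative inner-loop suboptimality $\mathcal{E} = \mathbb{E}\bigl[\sum_{t=0}^{T-1} l^2\|y_t - y^*(x_t)\|^2/(2\sqrt{v_0})\bigr]$. Once I show $\mathcal{E} = \widetilde{O}(1)$ — polylogarithmic in $T$ and independent of $\epsilon$ — then, since $M = \epsilon^{-2}$ makes $A = \widetilde{O}\bigl((\Phi(x_0)-\min_x\Phi(x))/\eta + 2\sigma\epsilon + \kappa l\eta\bigr) = \widetilde{O}(1)$, the right-hand side of \Cref{lemma stoc} becomes $\widetilde{O}\bigl(T^{-1/2} + \epsilon^{1/2}T^{-1/4}\bigr)$, which falls below $\epsilon$ as soon as $T = \widetilde{O}(\epsilon^{-2})$.

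To bound $\mathcal{E}$, set $\Delta_t := \mathbb{E}\|y_t - y^*(x_t)\|^2$. The delicate point is that the inner loop at stage $t$ is warm-started from $y_{t-1}$, which is close to $y^*(x_{t-1})$, not $y^*(x_t)$; so I would first split $\|y_{t-1}-y^*(x_t)\|^2 \le 2\|y_{t-1}-y^*(x_{t-1})\|^2 + 2\|y^*(x_{t-1})-y^*(x_t)\|^2$, and bound the drift term by $2\kappa^2\|x_t-x_{t-1}\|^2 \le 2\kappa^2\eta^2$ using the $\kappa$-Lipschitzness of $y^*(\cdot)$ from \Cref{lemma:smooth} together with the AdaGrad normalization $\|x_t-x_{t-1}\| = \tfrac{\eta}{\sqrt{v_t}}\bigl\|\tfrac1M\sum_i\nabla_xF(x_{t-1},y_{t-1};\xi^i_{t-1})\bigr\| \le \eta$ (which holds because $v_t \ge \|\tfrac1M\sum_i\nabla_xF\|^2$). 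Taking conditional expectation in the subroutine guarantee and then full expectation gives the recursion
\begin{equation*}
  \Delta_t \;\le\; \frac{2b_1\,\Delta_{t-1} + \bigl(2b_1\kappa^2\eta^2 + b_2\bigr)}{t},\qquad t\ge 1,
\end{equation*}
with $\Delta_0$ a fixed constant set by the initialization distance. Since the right-hand side is monotone in $\Delta_{t-1}$, $\Delta_t$ is dominated by the sequence obeying this relation with equality, and \Cref{prop sum recursion} (applied with $a_1 = 2b_1$ and $a_2 = 2b_1\kappa^2\eta^2 + b_2$) then yields $\sum_{t=0}^{T-1}\Delta_t \le a_2(1+\log T) + a_2 e^{2b_1} + \Delta_0 e^{2b_1} = \widetilde{O}(1)$, hence $\mathcal{E} = \widetilde{O}(1)$.

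With $\mathcal{E}, A = \widetilde{O}(1)$ and $T = \widetilde{O}(\epsilon^{-2})$, \Cref{lemma stoc} gives $\mathbb{E}\bigl[\sqrt{\tfrac1T\sum_t\|\nabla_xf(x_t,y_t)\|^2}\bigr] \le \epsilon$; by the power-mean inequality this implies $\tfrac1T\sum_t\mathbb{E}\|\nabla_xf(x_t,y_t)\| \le \epsilon$. For the dual variable, Jensen's inequality and concavity of $\sqrt{\cdot}$ give $\tfrac1T\sum_t\mathbb{E}\|y_t-y^*(x_t)\| \le \tfrac1T\sum_t\sqrt{\Delta_t}\le\sqrt{\tfrac1T\sum_t\Delta_t} = \widetilde{O}(T^{-1/2}) \le \epsilon$. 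Adding the two averaged inequalities and invoking a pigeonhole argument over $t$ produces a deterministic index $t^* \le T = \widetilde{O}(\epsilon^{-2})$ for which both $\mathbb{E}\|\nabla_xf(x_{t^*},y_{t^*})\|$ and $\mathbb{E}\|y_{t^*}-y^*(x_{t^*})\|$ are $O(\epsilon)$, i.e.\ an $\epsilon$-stationary point after rescaling $\epsilon$. Finally, summing the per-stage sample cost $M + (t\log^p t + 1)$ over $t=0,\dots,T-1$ gives $TM + \widetilde{O}(T^2) = \widetilde{O}(\epsilon^{-2}\cdot\epsilon^{-2}) + \widetilde{O}(\epsilon^{-4}) = \widetilde{O}(\epsilon^{-4})$ stochastic gradient evaluations.

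The step I expect to be the main obstacle is exactly the control of $\mathcal{E}$: the inner loops are chained through the warm start, and the subroutine only certifies a $1/t$ rate \emph{relative to} its own uncontrolled starting error, which is the previous stage's suboptimality plus an $O(\eta^2)$ drift. The resulting recursion has a multiplicative factor $2b_1$ that need not be below $1$, so a naive unrolling would let $\Delta_t$, and hence $\mathcal{E}$, grow polynomially and render \Cref{lemma stoc} vacuous; the resolution — packaged in \Cref{prop sum recursion}, whose $e^{2b_1}$ factor absorbs the transient phase $t < 2b_1$ — is that dividing by the growing index $t$ tames the recursion, and that it is the constant-order drift $O(\eta^2)$, not the vanishing warm-start term, that governs the size of the bound, making $\mathcal{E}$ uniform in $T$ up to logarithms. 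Everything downstream of this is routine bookkeeping on \Cref{lemma stoc}.
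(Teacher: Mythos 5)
Your proposal is correct and follows essentially the same route as the paper: the same warm-start decomposition $\|y_{t-1}-y^*(x_t)\|^2 \le 2\|y_{t-1}-y^*(x_{t-1})\|^2 + 2\kappa^2\eta^2$ (using the Lipschitzness of $y^*(\cdot)$ and the AdaGrad normalization $\|x_t-x_{t-1}\|\le\eta$), the same recursion $\Delta_t \le (2b_1\Delta_{t-1} + 2b_1\kappa^2\eta^2 + b_2)/t$ tamed via \Cref{prop sum recursion}, and the same plug-in to \Cref{lemma stoc} with $M=\epsilon^{-2}$ followed by the $TM + \widetilde{O}(T^2\log^p T)$ sample count. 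The only differences are cosmetic (explicit pigeonhole/Jensen steps the paper leaves implicit).
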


\begin{proof}
First we note that 
$$ \| y_t - y^*(x_{t+1})\|^2 \leq  2\|y_t - y^*(x_t)\|^2 + 2 \|y^*(x_t) - y^*(x_{t+1})\|^2 \leq 2\|y_t - y^*(x_t)\|^2  + 2\kappa^2\eta^2. $$
By the convergence guarantee of subroutine $\mathcal{A}$, after $t\log^p(t) + 1$ inner loop steps, it outputs
\begin{align}
\label{stoc_eb}
    \mathbb{E} \|y_{t+1} - y^*(x_{t+1})\|^2= \frac{b_1\|y_{t} - y^*(x_{t+1})\|^2 + b_2}{t} \leq \frac{2b_1\|y_t - y^*(x_t)\|^2 + 2\kappa^2\eta^2b_1 + b_2}{t}.
\end{align}
Taking expectation of both sides and by \Cref{prop sum recursion}, we have 
\begin{equation} \label{y dist sum}
\mathbb{E}\sum_{t=0}^T\|y_t - y^*(x_t)\|^2 \leq b_3(1+ \log T) + b_3e^{2b_1} + X_0e^{2b_1},
\end{equation}
with $b_3 = 2\kappa^2\eta^2b_1 + b_2$ and $X_0$ denotes $\|y_0 - y^*(x_0)\|^2$. Then
$$
\mathcal{E} = \frac{l^2}{2\sqrt{v_0}}\mathbb{E}\sum_{t=0}^{T-1} \|y_t - y^*(x_t)\|^2 \leq \frac{l^2}{2\sqrt{v_0}} \left[b_3(1+ \log T) + b_3e^{2b_1} + X_0e^{2b_1}\right].
$$
By setting $M = \epsilon^{-2}$ in  \Cref{lemma stoc}, we have 
\begin{equation*}
    \mathbb{E}\left[\sqrt{\frac{1}{T}\sum_{t=0}^{T-1} \|\nabla_x f(x_t, y_t)\|^2} \right] \leq \frac{2(A + \mathcal{E})}{\sqrt{T}} + \frac{v_0^{\frac{1}{4}}\sqrt{A + \mathcal{E}}}{\sqrt{T}} + \frac{2\sqrt{(A + \mathcal{E})\sigma\epsilon}}{T^{\frac{1}{4}}},
\end{equation*}
where $ A = \widetilde{\mathscr{O}}\left( \frac{\Phi(x_0) - \min_x \Phi(x)}{\eta} +        \left(\frac{2\sigma}{\sqrt{M}} + \kappa l\eta \right)(1 + b_1) \right)$. Therefore, 
\begin{align*}
    & \fakeeq \mathbb{E}\left[\sqrt{\frac{1}{T}\sum_{t=0}^{T-1} \|\nabla_x f(x_t, y_t)\|^2} \right] + \sqrt{\mathbb{E}\left[\frac{1}{T}\sum_{t=0}^{T-1} \|y_t - y^*(x_t)\|^2 \right]} \\
     & \leq \frac{2(A + \mathcal{E})}{\sqrt{T}} + \frac{v_0^{\frac{1}{4}}\sqrt{A + \mathcal{E}}}{\sqrt{T}} + \frac{2\sqrt{(A + \mathcal{E})\sigma\epsilon}}{T^{\frac{1}{4}}} + \frac{\sqrt{b_3(1+ \log T) + b_3e^{2b_1} + X_0e^{2b_1}}}{\sqrt{T}}.
\end{align*}
By setting the right hand side to $\epsilon$, we need 
$T = \widetilde{\mathscr{O}}\left( \left((A+\mathcal{E})^2 + \sqrt{v_0}(A+\mathcal{E})(1+\sigma) + b_3 + (b_3+X_0)e^{2b_1} \right)\epsilon^{-2}\right)$ outer loop iterations. Since $M = \epsilon^{-2}$, the sample complexity for $x$ is $T\epsilon^{-2} = \widetilde{O}(\epsilon^{-4})$. Since the inner loop iteration is at most $T\log^p T+1$, the sample complexity for $y$ is $T^2\log^pT + T = \widetilde{O}(\epsilon^{-4})$.

\end{proof}

\begin{remark}
The same sample complexity for the primal variable can be attained as above, as long as (\ref{stoc_eb}) holds. The choice for the subroutine will affect the number of samples needed to achieve (\ref{stoc_eb}), and therefore the sample complexity for the dual variable. Although the complexity above includes an exponential term in $b_1$, we note that $b_1 = 0$ in many subroutines for strongly-convex objectives \citep{NIPS2017_6aed000a, rakhlin2012making, lacoste2012simpler}.
\end{remark}

\subsection{Proofs for Generalized AdaGrad}

\paragraph{Proof of \Cref{thm:gen_ada}}
\begin{proof}
We separate the proof into three parts.
\paragraph{Part I.}
From the update of Algorithm \ref{alg:adagrad_sc}, we have for any $x \in \mathcal{X}$
\begin{align*}
    \|x_{t+1} - x\|^2  =  \left\|x_{t} - \frac{\eta}{v_{t+1}^\alpha}g_t - x\right\|^2 = \|x_t - x\|^2 + \frac{\eta^2}{v_{t+1}^{2\alpha}}\|g_t\|^2 - \frac{2\eta}{v_{t+1}^\alpha}\langle g_t, x_t-x\rangle. 
\end{align*}
Multiple each side by $v_{t+1}^{\alpha}$, 
\begin{align*}
     \fakeeq v_{t+1}^{\alpha}\|x_{t+1} - x\|^2 
     = v_{t+1}^{\alpha}\|x_t - x\|^2 + \frac{\eta^2}{v_{t+1}^\alpha} \|g_t\|^2 - 2\eta \langle g_t, x_t-x\rangle.
\end{align*}
By strong convexity, 
\begin{equation*}
    f_t(x_t) - f_t(x) \leq \langle g_t, x_t - x\rangle - \frac{\mu}{2}\|x_t - x\|^2.
\end{equation*}
Plug it into the previous inequality, 
\begin{align*}
      v_{t+1}^{\alpha }\|x_{t+1} - x\|^2 \leq  v_{t+1}^{\alpha}\|x_t - x\|^2 + \frac{\eta^2}{v_{t+1}^\alpha} \|g_t\|^2 - 
     2\eta[f_t(x_t) - f_t(x^*)] - \eta\mu \|x_t - x\|^2.
\end{align*}
Telescope from $t = 0$ to $T -1 $,
\begin{align} \label{gen_ada_sum}
    2\eta \sum_{t=0}^{T-1} [f_t(x_t) - f_t(x)] \leq v_{1}^{\alpha}\|x_0 - x\|^2 -  v_{T}^{\alpha }\|x_{T} - x\|^2 
    - \sum_{t=1}^{T-1}\left[v_t^{\alpha} - v_{t+1}^{\alpha} + \eta\mu \right]\|x_t - x\|^2 
    +  \sum_{t=0}^{T-1}\frac{\eta^2}{v_{t+1}^\alpha}\|g_t\|^2. 
\end{align}

\paragraph{Part II.} In the part, we focus on the second term on the right hand side of the previous inequality. For convenience, we denote 
$$ B_t =  v_{t+1}^{\alpha} - v_t^{\alpha} - \eta\mu.$$
Denote set $S = \{t: B_t > 0 \}$. We will first bound the number of $t$ for which the coefficient $B_t$ is positive, i.e., $|S|$, for the case $0 < \alpha < 1$. 
We note that
\begin{align} \nonumber
    B_t & = (v_t + \|g_t\|^2)^\alpha - v_t^\alpha - \eta\mu  = v_t^\alpha \left[\left(\frac{v_t + \|g_t\|^2}{v_t} \right)^\alpha - 1 \right] - \eta \mu \\ \label{B bound}
    & \leq v_t^\alpha \left(1 + \alpha \frac{\|g_t\|^2}{v_t} -1 \right) - \eta\mu = \frac{\alpha \|g_t\|^2}{v_t^{1-\alpha}} - \eta \mu, 
\end{align}
where in the inequality we apply Bernoulli's inequality, i.e., $(1+x)^r\leq 1+rx$ with $0\leq r\leq 1$ and $x\geq -1$. If $B_t$ is positive, it leads to 
\begin{align}\label{gt1}
    B_t > 0 \quad &  \Longleftrightarrow \quad \|g_t\|^2 > \frac{\eta\mu}{\alpha}v_t^{1-\alpha} \\ \label{gt2}
    &  \Longrightarrow \quad \|g_t\|^2 > \frac{\eta\mu}{\alpha}v_0^{1-\alpha}
\end{align}
This means $\|g_t\|$ is not small once we observe $B_t > 0$. Since $\|g_t\|^2 \leq G^2$, if the right hand side of (\ref{gt1}) is larger or equal to $G^2$, then $B_t$ can not be positive, i.e. 
\begin{equation*}
   \frac{\eta\mu}{\alpha}v_t^{1-\alpha} \geq G^2 \quad \Longleftrightarrow \quad  v_t \geq \left( \frac{\alpha G^2}{\eta \mu} \right)^{\frac{1}{1-\alpha}}.
\end{equation*}
On the other hand, because $v_{t+1} = v_t + \|g_t\|^2$, (\ref{gt2})  implies that once we observe $B_t > 0$, $v_t$ will increase by at least $\frac{\eta\mu}{\alpha}v_0^{1-\alpha}$. Therefore, it can be positive for only finite times, i.e., 
\begin{equation} \label{s size}
|S| \leq \frac{\left( \frac{\alpha G^2}{\eta \mu} \right)^{\frac{1}{1-\alpha}}}{\frac{\eta\mu}{\alpha}v_0^{1-\alpha}} = \frac{\alpha(\alpha G^2)^{\frac{1}{1-\alpha}}}{(\eta \mu)^{\frac{2-\alpha}{1-\alpha}}v_0^{1-\alpha}}.
\end{equation}
Even when $B_t$ is positive, its value is bounded above from (\ref{B bound}),
\begin{equation} \label{B bound2}
    B_t \leq \frac{\alpha \|g_t\|^2}{v_t^{1-\alpha}} - \eta \mu \leq \frac{\alpha G^2}{v_0^{1-\alpha}}.
\end{equation}
Now it is left to discuss the case $\alpha = 1$. When $\alpha = 1$, 
\begin{equation*}
    B_t = -v_t + v_{t+1} -\eta\mu \leq \|g_t\|^2 - \eta\mu \leq G^2 - \eta\mu.
\end{equation*}
Therefore, when $\eta \geq \frac{G^2}{\mu}$, we have $B_t \leq 0$ for all $t$.

\iffalse
\paragraph{Part III. } In this part we focus on bounding $B_t$ when it is positive. Since $B_t \leq 0$ for all $t$ in the case $\alpha = 1$, we consider the case $0<\alpha<1$ now. When $t \geq t_0$,  in the case $0 < \alpha \leq \frac{1}{2}$, we have $B_t \leq 0$;
in the case $\frac{1}{2} < \alpha <1$, by (\ref{after_t0})
\begin{equation}
B_t  \leq  (t+1)^{1-\alpha}v_t^{2\alpha-1}(2\alpha - 1)\frac{\|g_t\|^2}{v_t} 
\end{equation}
\fi

\paragraph{Part III. } In this part we wrap up everything for two cases: i) $0 < \alpha \leq 1$; ii) $\alpha = 1$. From equation (\ref{gen_ada_sum}),
\begin{align} \label{gen_ada_sum2}
    2 \eta \sum_{t=0}^{T-1}[f_t(x_t) - f_t(x)] & \leq v_{1}^{\alpha}\mathcal{D}^2 + \sum_{t \in S} B_t \mathcal{D}^2 +
    \eta^2\sum_{t=0}^{T-1}\frac{1}{v_{t+1}^\alpha}\|g_t\|^2 
\end{align}

\paragraph{Case $0 < \alpha \leq 1$.} By Lemma \ref{lemma:bound_sum}, (\ref{s size}) and (\ref{B bound2}),
\begin{align} \nonumber
    2 \eta \sum_{t=0}^{T-1}[f_t(x_t) - f_t(x)]  
   &  \leq v_{1}^{\alpha}\mathcal{D}^2 + \sum_{t \in S} B_t \mathcal{D}^2 + \frac{\eta^2}{1-\alpha} v_{t+1}^{1-\alpha} \\ \nonumber
   & \leq (v_0 + G^2)^{\alpha}\mathcal{D}^2 + \frac{\alpha(\alpha G^2)^{\frac{2-\alpha}{1-\alpha}}}{(\eta \mu)^{\frac{2-\alpha}{1-\alpha}}v_0^{2-2\alpha}} + \frac{\eta^2}{1-\alpha} v_{t+1}^{1-\alpha}.
\end{align}

\paragraph{Case $ \alpha = 1$.}  We have $B_t \leq 0$ for all $t$ as $\eta \geq \frac{G^2}{\mu}$. Then by Lemma \ref{lemma:bound_sum},
\begin{align*} 
    2\eta\sum_{t=0}^{T-1}[f_t(x_t) - f_t(x)] \leq (v_0 + G^2)\mathcal{D}^2 + \eta^2\log \left(\frac{\sum_{t=0}^{T-1} \|g_t\|^2}{v_0} \right).
\end{align*}
\end{proof}

\begin{remark}
We note that  the regret bounds contain a constant term $\mu^{-\frac{1}{1-\alpha}}$, which increases exponentially as $\alpha$ approaches $1$. However, such term is common even in the convergence result of SGD with a non-adaptive stepsize $\frac{\eta}{t^{\alpha}}$ in strongly-convex stochastic optimization; e.g., Theorem 1 in~\citep{moulines2011non} and Theorem 31 in~\citep{fontaine2021convergence} both contain a term that will not diminish before $\Theta\left(\mu^{-\frac{1}{1-\alpha}} \right)$ iterations. 
\end{remark}

\section{High Probability Convergence Analysis} \label{apdx:high_prob}

We provide a high probability convergence guarantee for
the primal variable of NeAda-AdaGrad~(\Cref{alg::neada-adagrad}).
We make two additional assumptions, which are standard for high probability
analysis~\citep{DBLP:journals/corr/abs-2007-14294,kavis2022high}, one on the
norm-subGaussian~\citep{jin2019short} noise and another on the bounded
gradient.

\begin{assume}[Bounded gradient in $x$] \label{assum:bounded_gradient}
  There exists a constant $G > 0$ such that for any $x$ and $y$, 
  $\|\nabla_x f(x, y)\| \leq G$.
\end{assume}
\begin{assume}[Unbiased norm-subGaussian noise] \label{assum:sub_gaussian}
  $\nabla_x F(x, y; \xi)$ is the unbiased stochastic gradient, and we have 
  \[
  \Ep[\xi]{\exp{\left(\norm{\nabla_x F(x, y; \xi) - \nabla_x
  f(x, y)}^2 / \sigma^2\right)}} \leq \exp(1).
  \]
\end{assume}
\begin{remark}
  To deal with multi-dimensional random variables, norm-subGaussian is
  a common assumption in high probability
  analysis~\citep{DBLP:journals/corr/abs-2007-14294,kavis2022high,jin2021nonconvex,madden2020high}.
  If a random vector is $\sigma$-norm-subGaussian, then it is also $\sigma$-subGaussian (vector) and has the variance bounded by $\sigma^2$.
\end{remark}

\begin{theorem} \label{thm:high_prob}
  Under \Cref{assum:smooth,assum:sc,assum:bounded_gradient,assum:sub_gaussian},
  assume there is a subroutine $\mathcal{A}$ that in the $t$-th outer loop,
  with probability at least $1- \delta$, returns $y_t$ after
  $t+1$ steps and guarantees $\norm*{y_t - y^*(x_t)}^2 \leq O(\log 1/\delta)/(t+1)$.
If we use Algorithm \ref{alg::neada-adagrad} with stopping criterion II,
  then with probability at least $1 - 5\delta$ and $v_0 > 0$, we have
  \begin{gather*}
    \frac{1}{T}\sum_{t=0}^{T-1} \norm*{\nabla_x f(x_t, y_t)}^2
    \leq \frac{1}{T}\Bigg[32\left(2 l \kappa \eta + \frac{\Delta}{\eta}\right)^2 
    + 8 \sqrt{v_0} \left(2l\kappa \eta + \frac{\Delta}{\eta}\right)
    + \frac{32 \sigma^2}{M} \log(1/\delta) \\
    \fakeeq + 10c_1 l^2 \left(1 + \log T \right) \log\left( T^2 / \delta \right) \Bigg]
    + \frac{1}{\sqrt{T}}\left[ \frac{8\sqrt{2}\sigma}{\sqrt{M}} \left(2l\kappa \eta + \frac{\Delta}{\eta}\right) \sqrt{c_2 \log \frac{2dT}{\delta}} \right].
  \end{gather*}
  where $\Delta = \max_{0 \leq t \leq T-1} \Phi(x_t) - \Phi^* \leq O\big(\log (T) \log (T/\delta)\big)$, $d$ is
  dimension of $x_t$ and $c_1, c_2$ are constants.
  \iffalse
  \begin{align*}
    \Delta_{\max} 
    &\leq \Delta_0 +
    \frac{3\eta}{2\sqrt{v_0}} G^2 + \frac{3\eta\widetilde{\sigma}^2}{4\sqrt{v_0}}\log(1/\delta) + \frac{\eta\tilde{\sigma}^2}{2\sqrt{v_0}} \log( eT/\delta ) \\
    &\fakeeq + \left(l\kappa + \frac{1}{2}\right)\eta^2 \Big(1 + \log\left( 1 + v_0 + 2G^2T + 2\widetilde{\sigma}^2T\log(eT/\delta) \right) \Big)
        + \frac{k^2}{2} \left(1 + \log(T-1) \right).
  \end{align*}
  \fi
\end{theorem}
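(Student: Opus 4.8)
The plan is to retrace the expectation-based argument behind \Cref{lemma stoc}, replacing each step where an expectation was taken with a high-probability concentration statement, and then to union-bound the resulting ``good events'' so that they cost at most $5\delta$ in total. First I would invoke the $L$-smoothness of $\Phi$ from \Cref{lemma:smooth} to obtain the one-step inequality $\Phi(x_{t+1}) \le \Phi(x_t) - \frac{\eta}{\sqrt{v_{t+1}}}\langle \nabla\Phi(x_t), \bar g_t\rangle + \frac{\kappa l \eta^2}{v_{t+1}}\|\bar g_t\|^2$, where $\bar g_t = \frac{1}{M}\sum_i \nabla_x F(x_t,y_t;\xi_t^i)$. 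Since $v_{t+1}$ contains $\|\bar g_t\|^2$ and is therefore not measurable with respect to the filtration $\mathcal{F}_t$ generated by the first $t$ outer iterations, I would introduce the $\mathcal{F}_t$-measurable surrogate denominator $\widetilde a_t := \sqrt{v_t + \|\nabla_x f(x_t,y_t)\|^2 + \sigma^2/M}$, exactly as in \Cref{lemma stoc}, splitting $\langle\nabla\Phi(x_t),\bar g_t\rangle/\sqrt{v_{t+1}}$ into a surrogate term with denominator $\widetilde a_t$ plus a correction term proportional to $1/\sqrt{v_{t+1}} - 1/\widetilde a_t$, the latter bounded by Young's inequality. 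Using $\langle a,b\rangle \ge \frac{1}{2}\|b\|^2 - \frac{1}{2}\|a-b\|^2$ together with $\|\nabla\Phi(x_t) - \nabla_x f(x_t,y_t)\| \le l\|y_t - y^*(x_t)\|$ (Assumption~\ref{assum:smooth}), the surrogate term splits into a $+\tfrac{1}{2\widetilde a_t}\|\nabla_x f(x_t,y_t)\|^2$ descent term, a $-\tfrac{l^2}{2\widetilde a_t}\|y_t-y^*(x_t)\|^2$ inner-loop error, and the zero-mean noise term $D_t := \langle\nabla\Phi(x_t), \nabla_x f(x_t,y_t) - \bar g_t\rangle/\widetilde a_t$.

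Second, I would control the three random ingredients by concentration instead of expectation. (i) The minibatch noise: by \Cref{assum:sub_gaussian} and a norm-Hoeffding bound for the average of $M$ i.i.d.\ norm-subGaussian vectors, $\|\bar g_t - \nabla_x f(x_t,y_t)\|^2 = O\!\left(\frac{\sigma^2}{M}\log(T/\delta)\right)$ for all $t$ simultaneously with probability $1-\delta$; this replaces $\mathbb{E}_{\xi_t}\|\bar g_t-\nabla_x f\|^2 \le \sigma^2/M$ everywhere it was used, and combined with the bounded gradient $G$ from \Cref{assum:bounded_gradient} it makes $\sum_t \|\bar g_t\|^2/v_{t+1}$ controllable by \Cref{lemma:bound_sum2} up to a $O(\log(T/\delta))$ logarithmic factor. (ii) The martingale $\sum_{t=0}^{T-1} D_t$: since $\widetilde a_t$ is $\mathcal{F}_t$-measurable and $\mathbb{E}[\bar g_t\mid\mathcal{F}_t]=\nabla_x f(x_t,y_t)$, the $D_t$ form a martingale-difference sequence, each conditionally norm-subGaussian with parameter $\lesssim \frac{\sigma}{\sqrt M}\cdot \frac{\|\nabla\Phi(x_t)\|}{\widetilde a_t}$; using $\frac{\|\nabla\Phi(x_t)\|}{\widetilde a_t}\le \frac{\|\nabla_x f(x_t,y_t)\|}{\widetilde a_t}+\frac{l\|y_t-y^*(x_t)\|}{\sqrt{v_0}}\le 1+\frac{l\|y_t-y^*(x_t)\|}{\sqrt{v_0}}$, which is high-probability bounded by the inner-loop guarantee, a time-uniform Azuma/Freedman-type bound for norm-subGaussian martingale differences gives $\sum_t D_t \le O\!\left(\frac{\sigma}{\sqrt M}\sqrt{T\log(dT/\delta)}\right)$ with probability $1-\delta$. (iii) The cumulative inner suboptimality: apply the subroutine guarantee $\|y_t-y^*(x_t)\|^2 = O(\log(1/\delta_t))/(t+1)$ with $\delta_t = \delta/T^2$, union bound over $t$, and use $\sum_{t}\frac{1}{t+1}\le 1+\log T$ to obtain the $O\!\left((1+\log T)\log(T^2/\delta)\right)$ term in the statement.

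Third, I would bound $\Delta = \max_{0\le t\le T-1}\Phi(x_t)-\Phi^*$ by a companion telescoping argument: from the same one-step inequality, $\Phi(x_{t+1})\le\Phi(x_t)+\|\nabla\Phi(x_t)\|\frac{\eta\|\bar g_t\|}{\sqrt{v_0}}+\frac{L\eta^2}{2}\frac{\|\bar g_t\|^2}{v_{t+1}}$, and using $\|\nabla_x f\|\le G$, the noise concentration of (i), the AdaGrad telescoping of $\sum\|\bar g_t\|^2/v_{t+1}$, and a martingale bound on the cross term, one gets $\Delta = O(\log T\log(T/\delta))$ as claimed. Finally I would assemble everything as in \Cref{lemma stoc}: telescoping $\Phi$ over $t=0,\dots,T-1$ yields an inequality of the shape $\tfrac12\sum_t \|\nabla_x f(x_t,y_t)\|^2/\widetilde a_t \le O(\Delta/\eta + \kappa l\eta)\cdot[\text{log factor in }\sqrt{Z}] + O(\sigma M^{-1/2}\sqrt{T\log(dT/\delta)}) + \mathcal{E}_{\mathrm{hp}}$, with $Z=\sum_t\|\nabla_x f(x_t,y_t)\|^2$; lower-bounding the left side by the self-bounding/Cauchy--Schwarz step of \Cref{lemma stoc} by $Z/(\sqrt{v_0}+3\sqrt Z+\cdots)$ and invoking \Cref{prop log bound} to strip the logarithm gives a closed-form bound on $Z$; dividing by $T$ and collecting the bad events (noise concentration, the two martingale bounds, the subroutine, the $\Delta$-event) produces the stated bound with total failure probability at most $5\delta$.

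The main obstacle is step (ii): the martingale increments $D_t$ are data-dependent, since their scale is governed by $\|\nabla\Phi(x_t)\|$ — precisely the quantity the theorem is supposed to control — so no vanilla Azuma bound applies directly. The resolution is the structural decoupling $\|\nabla\Phi(x_t)\|/\widetilde a_t \le 1+l\|y_t-y^*(x_t)\|/\sqrt{v_0}$, which separates the (possibly large) gradient from the denominator and leaves only the high-probability-bounded inner-loop error, combined with a time-uniform concentration inequality for norm-subGaussian martingale differences; care is also needed so that this concentration term does not re-introduce a dependence on $Z$ that would break the final self-bounding step, and in the bookkeeping of the several union bounds so they sum to $5\delta$. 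An alternative to the structural decoupling is a bootstrap — first derive a crude high-probability gradient bound from $G$, use it to bound the increments, then apply a Freedman-type inequality — at the cost of slightly looser constants.
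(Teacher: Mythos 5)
Your overall architecture differs from the paper's: you redo \Cref{lemma stoc} pathwise, keeping the surrogate denominator $\tilde a_t=\sqrt{v_t+\|\nabla_x f(x_t,y_t)\|^2+\sigma^2/M}$ and the normalized sum $\sum_t\|\nabla_x f(x_t,y_t)\|^2/\tilde a_t$ on the left, then strip the logarithm with \Cref{prop log bound}; the paper instead multiplies the descent inequality by $\sqrt{v_{t+1}}/\eta$, telescopes into a weighted term bounded by $\Delta\sqrt{v_T}/\eta$ with $\Delta=\max_t(\Phi(x_t)-\Phi^*)$, and solves a plain quadratic in $\sqrt{Z}$. That difference alone would be acceptable, but two of your steps fail as written. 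First, your bound on $\Delta$ is broken: once you pass to $\Phi(x_{t+1})\le\Phi(x_t)+\|\nabla\Phi(x_t)\|\,\tfrac{\eta}{\sqrt{v_0}}\|\bar g_t\|+\cdots$, the cross term is no longer mean-zero -- its conditional expectation is of order $\eta G^2/\sqrt{v_0}$ per step -- so no martingale or concentration argument can keep its sum below $\Theta(T)$, and the claimed $\Delta=O(\log T\log(T/\delta))$ (which is part of the theorem statement) does not follow. This is precisely where the paper does its real work: it keeps the signed term $-\tfrac{\eta}{\sqrt{v_{t+1}}}\langle\nabla\Phi(x_t),\bar g_t\rangle$, swaps $1/\sqrt{v_{t+1}}$ for the $\mathcal{F}_t$-measurable $1/\sqrt{v_t}$ (paying a telescoping correction controlled by \Cref{assum:bounded_gradient} and \Cref{lemma:bound_MDS_3}), and applies \Cref{lemma:bound_MDS_1} with the specific choice $\lambda=4\sqrt{v_0}/(3\eta\sigma^2)$ so that the resulting quadratic-variation term $\tfrac34\lambda\sigma^2\sum_t\tfrac{\eta^2}{v_t}\|\nabla_x f(x_t,y_t)\|^2$ is cancelled by the retained negative term $-\sum_t\tfrac{\eta\sqrt{v_0}}{v_{t+1}}\|\nabla_x f(x_t,y_t)\|^2$; only with this sign-preserving, self-cancelling argument (plus \Cref{lemma:bound_sum2} and the inner-loop error) does the polylogarithmic bound on $\Delta$ emerge.

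Second, your treatment of the main martingale via the decoupling $\|\nabla\Phi(x_t)\|/\tilde a_t\le 1+l\|y_t-y^*(x_t)\|/\sqrt{v_0}$ followed by an Azuma-type bound produces a right-hand-side term of size $\tfrac{\sigma}{\sqrt M}\sqrt{T\log(dT/\delta)}$; after your final self-bounding step this squares into a non-decaying contribution of order $\tfrac{\sigma^2}{M}\log(dT/\delta)$ to $\tfrac1T\sum_t\|\nabla_x f(x_t,y_t)\|^2$, so you would prove a strictly weaker inequality than the stated one, whose stochastic terms are $\tfrac{\sigma^2}{MT}\log(1/\delta)$ and $\tfrac{1}{\sqrt T}\cdot\tfrac{\sigma}{\sqrt M}(\cdot)\sqrt{\log(2dT/\delta)}$ and vanish as $T\to\infty$ for fixed $M$ (your extra term does not, although it still yields the same $\widetilde O(\epsilon^{-4})$ complexity when $M=\epsilon^{-2}$). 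The remedy -- exactly the device the paper uses in both recursions -- is self-normalization rather than worst-casing: keep the random scale $Y_t\propto\tfrac{\sigma}{\sqrt M}\|\nabla_x f(x_t,y_t)\|$ (or $\|\nabla_x f(x_t,y_t)\|/\tilde a_t$ in your normalization, splitting off the $l\|y_t-y^*(x_t)\|$ part) inside \Cref{lemma:bound_MDS_1} and choose $\lambda$ so that $\tfrac34\lambda\sum_t Y_t^2$ is absorbed into the left-hand side, leaving only an additive $O(\sigma^2\log(1/\delta)/M)$ cost. With these two repairs, and the same $\delta$-bookkeeping the paper uses ($2\delta$ for the inner loop via $\sum_t(t+1)^{-2}\le 2$, one $\delta$ per concentration event), your route would go through.
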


\begin{remark}
The complexity requirement for the subroutine $\mathcal{A}$ is
$O(1/T)$ with logarithmic terms on $1/\delta$ for the strongly concave subproblem. This can be achieved by
\citep{jain2019making,harvey2019simple}~\footnote{
\citet{jain2019making,harvey2019simple} both assume bounded stochastic
gradient.}, although they both require knowledge of the strong convexity parameter.
\end{remark}
\begin{remark}
The theorem implies an $\widetilde{O}(\epsilon^{-4})$ sample complexity for the
primal variable as long as the stopping criterion is satisfied. We do not
provide the complexity for the dual variable, because it needs case-by-case
study depending on the subroutine under this criterion.
The analysis for this theorem
is motivated by recent progress in high probability bound for AdaGrad in
nonconvex optimization~\citep{kavis2022high}.

\end{remark}

We first present the following helper lemmas for the proof.

\begin{lemma}[Lemma~1 in \citep{DBLP:journals/corr/abs-2007-14294}] 
  \label{lemma:bound_MDS_1}
 Let $Z_{0}, \cdots, Z_{T-1}$ be a martingale difference sequence (MDS) with respect to random vectors $\xi_{0}, \cdots, \xi_{T-1}$ and $Y_{t}$ be a sequence of random variables which is $\sigma\left(\xi_{0}, \cdots, \xi_{t-1}\right)$-measurable. Given that $\mathbb{E}\left[\exp \left(Z_{t}^{2} / Y_{t}^{2}\right) \mid \xi_{0}, \cdots \xi_{t-1}\right] \leq \exp (1)$, for any $\lambda>0$ and $\delta \in(0,1)$ with probability at least $1-\delta$,
$$
\sum_{t=0}^{T-1} Z_{t} \leq \frac{3}{4} \lambda \sum_{t=0}^{T-1} Y_{t}^{2}+\frac{1}{\lambda} \log (1 / \delta).
$$
\end{lemma}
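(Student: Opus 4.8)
The plan is to run the classical exponential-supermartingale (Chernoff--Ville) argument. First I would fix the free parameter $\lambda>0$ once and for all, and set $\mathcal{F}_t=\sigma(\xi_0,\dots,\xi_{t-1})$, so that by hypothesis $Y_t$ is $\mathcal{F}_t$-measurable, $\mathbb{E}[Z_t\mid\mathcal{F}_t]=0$, and $\mathbb{E}[\exp(Z_t^2/Y_t^2)\mid\mathcal{F}_t]\le e$. With this notation the only genuinely non-routine ingredient is a conditional moment generating function bound for each increment, namely
\[
\mathbb{E}\big[\exp(\lambda Z_t)\mid\mathcal{F}_t\big]\ \le\ \exp\!\big(\tfrac34\lambda^2 Y_t^2\big);
\]
once this is in hand the rest is mechanical.

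To establish the MGF bound I would distinguish the size of $\lambda^2 Y_t^2$. When $\lambda^2 Y_t^2\le 1$, I would combine the centering $\mathbb{E}[Z_t\mid\mathcal{F}_t]=0$ with an elementary pointwise inequality such as $e^u\le u+e^{u^2}$ (valid for all real $u$) to reduce matters to bounding $\mathbb{E}[\exp(\lambda^2 Z_t^2)\mid\mathcal{F}_t]$, and then apply Jensen's inequality to $z\mapsto(e^{z^2/Y_t^2})^{\lambda^2 Y_t^2}$, which is a concave power since the exponent lies in $[0,1]$, together with the hypothesis. When $\lambda^2 Y_t^2>1$, I would instead bound $\lambda Z_t\le\tfrac12(\lambda^2 Y_t^2+Z_t^2/Y_t^2)$ by AM--GM, pull the square root inside the expectation, and use the hypothesis again, absorbing the resulting absolute constant into $\tfrac34\lambda^2 Y_t^2$. (Alternatively one may simply quote the standard norm-subGaussian MGF estimate, e.g.\ from~\citep{jin2019short}.) The hard part will be exactly this sub-Gaussian conversion, and in particular pinning down the numerical constant $\tfrac34$; the supermartingale machinery around it is routine.

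Next I would define, for $t=0,1,\dots,T$,
\[
S_t\ =\ \exp\!\Big(\lambda\sum_{s=0}^{t-1}Z_s\ -\ \tfrac34\lambda^2\sum_{s=0}^{t-1}Y_s^2\Big),\qquad S_0=1,
\]
and verify that $(S_t)$ is a nonnegative supermartingale with respect to $(\mathcal{F}_t)$: since $S_t$ and $Y_t$ are $\mathcal{F}_t$-measurable and $S_{t+1}=S_t\exp(\lambda Z_t-\tfrac34\lambda^2 Y_t^2)$, the MGF bound above gives $\mathbb{E}[S_{t+1}\mid\mathcal{F}_t]=S_t e^{-\frac34\lambda^2 Y_t^2}\mathbb{E}[e^{\lambda Z_t}\mid\mathcal{F}_t]\le S_t$, hence $\mathbb{E}[S_T]\le\mathbb{E}[S_0]=1$. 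Finally, Markov's inequality yields $\Pr[S_T\ge 1/\delta]\le\delta$, so on the complementary event, of probability at least $1-\delta$, one has $\lambda\sum_{s=0}^{T-1}Z_s-\tfrac34\lambda^2\sum_{s=0}^{T-1}Y_s^2<\log(1/\delta)$, and dividing through by $\lambda>0$ produces exactly the claimed bound $\sum_{t=0}^{T-1}Z_t\le\tfrac34\lambda\sum_{t=0}^{T-1}Y_t^2+\tfrac1\lambda\log(1/\delta)$.
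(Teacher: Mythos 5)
The paper itself offers no proof of this lemma: it is imported verbatim, with citation, from \citet{DBLP:journals/corr/abs-2007-14294}, so the only question is whether your blind argument actually establishes it. Your architecture --- a conditional MGF bound $\mathbb{E}[\exp(\lambda Z_t)\mid\mathcal{F}_t]\le\exp(\tfrac34\lambda^2Y_t^2)$, the exponential supermartingale $S_t$, and Markov/Ville --- is exactly the standard route (and essentially that of the cited source), and the supermartingale and Markov steps are fine. The genuine gap is precisely where you flagged it: the constant $\tfrac34$. The two ingredients you propose do not deliver it. In the regime $\lambda^2Y_t^2\le1$, the inequality $e^u\le u+e^{u^2}$ followed by Jensen gives $\mathbb{E}[e^{\lambda Z_t}\mid\mathcal{F}_t]\le e^{\lambda^2Y_t^2}$, whose exponent exceeds $\tfrac34\lambda^2Y_t^2$, so this case only yields the constant $1$. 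In the regime $\lambda^2Y_t^2>1$, the unweighted bound $\lambda Z_t\le\tfrac12(\lambda^2Y_t^2+Z_t^2/Y_t^2)$ with Jensen at power $\tfrac12$ gives $e^{\frac12\lambda^2Y_t^2+\frac12}$, and the additive $\tfrac12$ can be absorbed into $\tfrac34\lambda^2Y_t^2$ only when $\lambda^2Y_t^2\ge2$; for $\lambda^2Y_t^2\in(1,2)$ you again get only the constant $1$. Falling back on the norm-subGaussian estimates of \citet{jin2019short} likewise produces an unspecified absolute constant, not $\tfrac34$. So, as written, your argument proves the inequality with $\lambda\sum_t Y_t^2$ in place of $\tfrac34\lambda\sum_t Y_t^2$, which is weaker than the stated lemma (though such a weaker version would in fact suffice for the paper's use in \Cref{thm:high_prob} after re-tuning $\lambda$).

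To pin down $\tfrac34$, sharpen both cases and move the case threshold. When $\lambda Y_t\le\tfrac43$, use the pointwise inequality $e^u\le u+e^{\frac{9}{16}u^2}$ (valid for all real $u$, though with little slack around $u\approx0.6$): centering kills the linear term, and Jensen with power $\tfrac{9}{16}\lambda^2Y_t^2\le1$ gives exponent $\tfrac{9}{16}\lambda^2Y_t^2\le\tfrac34\lambda^2Y_t^2$. When $\lambda Y_t\ge\tfrac43$, use the weighted Young inequality $\lambda Z_t\le\tfrac38\lambda^2Y_t^2+\tfrac23\,Z_t^2/Y_t^2$ and Jensen at power $\tfrac23$, which gives $e^{\frac38\lambda^2Y_t^2+\frac23}$; since $\tfrac23\le\tfrac38\lambda^2Y_t^2$ in this regime, the exponent is again at most $\tfrac34\lambda^2Y_t^2$. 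The two cases meet exactly at $\lambda Y_t=\tfrac43$, and with this MGF bound in hand the remainder of your supermartingale-plus-Markov argument goes through unchanged.
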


\begin{remark} \label{remark:lemma_MDS_1}
In \Cref{thm:high_prob}, we consider mini-batch noise, i.e., in the $t$-th
step, we sample $M$ i.i.d. random noises, and the noise
$\xi_{t}^i$ satisfies \Cref{assum:sub_gaussian} for $i=1,\dots,M$. For ease of exposition of our proof, we note that \Cref{lemma:bound_MDS_1} implies the following result.
Assume $Z_0^0,\cdots,Z_{T-1}^M$ is a martingale difference sequence with respect to $\xi_0^0, \cdots, \xi_{T-1}^M$ (the order within a mini-batch $\{Z_t^{i}\}_{i=1}^{M}$ can be arbitrary),  $\widetilde Y_t$ is $\sigma\left(\xi_0^0, \cdots, \xi_{t-1}^M\right)$ 
measurable,
 and $\mathbb{E}\left[\exp \left((Z_{t}^i)^{2} / \widetilde Y_{t}^{2}\right) \mid \xi_{0}^0, \cdots, \xi_{t-1}^M\right] \leq \exp (1)$. 
Then denoting $\widetilde Z_t \coloneqq \frac{1}{M} \sum_{i=1}^M Z_t^i$,  
with probability at least $1-\delta$, we have
\begin{equation} \label{eq:lemma_MDS_1}
\sum_{t=0}^{T-1} \widetilde Z_{t} \leq \frac{3}{4} \lambda
\sum_{t=0}^{T-1} \widetilde Y_{t}^{2} + \frac{1}{\lambda M} \log (1 / \delta).
\end{equation}
\end{remark}

\begin{lemma}[Corollary~7 in \citep{jin2019short}] \label{lemma:nsgd_hoeffding}
 Let random vectors $X_{1}, \ldots, X_{n} \in
 \mathbb{R}^{d}$, and corresponding filtrations
 $\mathcal{F}_{i}=\sigma\left(X_{1}, \ldots, X_{i}\right)$
 for $i \in[n]$ satisfy that $X_{i} \mid \mathcal{F}_{i-1}$ is
 zero-mean $\sigma_{i}$-norm-subGaussian with $\sigma_{i} \in
 \mathcal{F}_{i-1}$. i.e., $$ \mathbb{E}\left[X_{i} \mid
 \mathcal{F}_{i-1}\right]=0, \quad
 \mathbb{P}\left(\left\|X_{i}\right\| \geq t \mid
 \mathcal{F}_{i-1}\right) \leq 2 e^{-\frac{t^{2}}{2 \sigma_{i}^{2}}}, \quad
 \forall t \in \mathbb{R}, \forall i \in[n]. $$
 There exists an absolute constant $c$ such that for any $\delta>0$, with
 probability at least $1-\delta$ : $$ \left\|\sum_{i=1}^{n} X_{i}\right\| \leq
 c \cdot \sqrt{\sum_{i=1}^{n} \sigma_{i}^{2} \log \frac{2 d}{\delta}}. $$
\end{lemma}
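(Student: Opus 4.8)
The final statement is a Hoeffding/Azuma-type concentration bound for the partial sums of a martingale-difference sequence of norm-subGaussian vectors, and the plan is to reproduce the argument of \citet{jin2019short} in four stages: a directional moment-generating-function (MGF) bound, a supermartingale/maximal-inequality step, a self-normalization step, and a reduction from directions to the norm. First I would fix an arbitrary direction $v\in\mathbb{R}^d$ and observe that, conditionally on $\mathcal{F}_{i-1}$, the scalar $\langle v,X_i\rangle$ is centered (since $\mathbb{E}[X_i\mid\mathcal{F}_{i-1}]=0$) and satisfies $|\langle v,X_i\rangle|\le\|v\|\,\|X_i\|$, so it has a subGaussian conditional tail with parameter $\|v\|\sigma_i$. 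Because a centered scalar with a subGaussian tail has a subGaussian MGF (up to a universal change of constant), this yields $\mathbb{E}[\exp(\langle v,X_i\rangle)\mid\mathcal{F}_{i-1}]\le\exp(C_0\sigma_i^2\|v\|^2)$, where we use that $\sigma_i$ is $\mathcal{F}_{i-1}$-measurable.

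Next I would form the supermartingale $M_k=\exp(\langle v,S_k\rangle-C_0\|v\|^2\sum_{i\le k}\sigma_i^2)$ with $S_k=\sum_{i\le k}X_i$; the previous step gives $\mathbb{E}[M_k\mid\mathcal{F}_{k-1}]\le M_{k-1}$ and $\mathbb{E}[M_0]\le1$, so Ville's maximal inequality gives $\mathbb{P}(\exists k\le n:\ \langle v,S_k\rangle\ge C_0\|v\|^2\sum_{i\le k}\sigma_i^2+s)\le e^{-s}$ for every $s>0$. Then I would self-normalize: writing $V_n=\sum_{i=1}^n\sigma_i^2$, the naive Chernoff choice $v=\theta u$ with $\theta\sim\sqrt{\log(1/\delta)/V_n}$ is not admissible when $V_n$ is random, so I would either integrate $M_k$ over $\theta$ against a Gaussian prior (the method of mixtures, which turns $M_n$ into a supermartingale whose value is essentially $\exp(\langle u,S_n\rangle^2/(cV_n))$ up to a $\sqrt{\log(1+V_n)}$ correction) or peel dyadically over the range of $V_n$; either route gives, for each fixed unit $u$, $\langle u,S_n\rangle\le c_1\sqrt{V_n\log(1/\delta)}$ with probability at least $1-\delta$ up to lower-order logarithmic terms. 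In the NeAda application $\sigma_i\equiv\sigma$ is constant, so $V_n$ is deterministic and this step collapses to the plain Chernoff optimization.

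Finally I would pass from directions to the norm. Since $\|S_n\|=\max_{\|u\|=1}\langle u,S_n\rangle$ and a $\tfrac12$-net $\mathcal{N}$ of the unit sphere obeys $\|S_n\|\le2\max_{u\in\mathcal{N}}\langle u,S_n\rangle$, a union bound over $\mathcal{N}$ already controls $\|S_n\|$; but a crude net costs a factor $d$ inside the logarithm, so to recover the stated $\log(2d)$ I would instead dilate each $X_i$ to the self-adjoint $(d+1)\times(d+1)$ matrix $\bigl(\begin{smallmatrix}0 & X_i^{\top}\\ X_i & 0\end{smallmatrix}\bigr)$, whose operator norm equals $\|X_i\|$ and whose sum has operator norm $\|S_n\|$, and apply a matrix Freedman/Bernstein inequality whose predictable quadratic variation is bounded in operator norm by $C\sum_i\sigma_i^2$; because the increments are only subGaussian rather than uniformly bounded, this needs a preliminary truncation of $\|X_i\|$ at scale $\sim\sigma_i\sqrt{\log(nd/\delta)}$ (the discarded mass absorbed into $\delta$ by the subGaussian tail and a union bound, after recentering the truncated increments). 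Collecting the pieces yields $\|S_n\|\le c\sqrt{\sum_{i=1}^n\sigma_i^2\,\log(2d/\delta)}$ with probability at least $1-\delta$ for an absolute constant $c$.

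The hard part will be handling the two genuinely martingale-level complications at once: the parameters $\sigma_i$ being only predictable (not deterministic) forces the self-normalized version of the Chernoff step rather than a one-line bound, and the increments being only subGaussian (not bounded) is what makes the last stage delicate — one must invoke either a Pinelis-type Hilbert-space martingale inequality or the truncation-plus-matrix-Freedman argument to keep the dimension dependence at $\log(2d)$ without picking up a stray additive $\sigma_{\max}\log^{3/2}(nd/\delta)$ term or a $\sqrt{d}$ factor.
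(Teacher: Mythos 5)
This lemma is not proved in the paper at all: it is imported verbatim as Corollary~7 of \citet{jin2019short}, so the only meaningful comparison is against that reference's argument, which establishes a matrix moment-generating-function bound directly for the Hermitian dilation of a norm-subGaussian vector and then applies the standard master (Tropp-type) bound, obtaining the $\log(2d)$ dependence with no nets, no truncation, and no Freedman inequality.

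Your sketch takes a different route and leaves a genuine gap precisely at the decisive step. Stages~1--3 (directional MGF, Ville, mixtures) only control fixed directions; combined with a net they give $\|\sum_i X_i\| \lesssim \sqrt{\sum_i \sigma_i^2\,(d+\log(1/\delta))}$, i.e.\ a $\sqrt{d}$ rather than $\sqrt{\log d}$ bound, and they feed nothing into your stage~4, which works at the matrix level. Stage~4 itself is not carried out: the Pinelis-type inequality is invoked only by name, and the truncation-plus-matrix-Freedman alternative does not yield the stated bound --- after truncating at level $R\sim\sigma_i\sqrt{\log(nd/\delta)}$ and recentering, Freedman's inequality produces an additive term of order $R\log(d/\delta)\sim\sigma\log^{3/2}(nd/\delta)$ (exactly the stray term you mention), which is not dominated by $\sqrt{\sum_i\sigma_i^2\log(2d/\delta)}$ uniformly in $n$ and $\delta$, so the clean Hoeffding form is not recovered; eliminating it requires the subGaussian matrix-MGF argument of \citet{jin2019short} (their Lemma~6), which your proposal never establishes. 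A secondary mismatch: your self-normalized step for predictable random $\sigma_i$ only gives the bound ``up to lower-order logarithmic terms,'' and indeed the clean statement with random $\sigma_i$ corresponds to their Corollary~8 (which carries extra $\log\log$ factors and range restrictions), not Corollary~7, which assumes deterministic $\sigma_i$. This looseness is harmless for the paper, since the lemma is only applied with constant $\sigma_i=\sigma$ in \Cref{lemma:bound_MDS_3}, but it means your argument, as sketched, does not prove the lemma exactly as stated.
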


\begin{lemma} \label{lemma:bound_MDS_3}
Under \Cref{assum:sub_gaussian}. For $\delta \in(0,1)$, with probability at least $1-\delta$,
\[
\max_{0 \leq t \leq T-1} \norm*{\frac{1}{M}\sum_{i=1}^{M} \nabla_x F(x_t, y_t; \xi_t^i) - \nabla_x f(x_t, y_t)}^2
\leq \frac{c\sigma^2}{M} \log \left(\frac{2dT}{\delta}\right),
\]
where $c$ is an absolute constant and $d$ is the dimension of $x_t$.
\end{lemma}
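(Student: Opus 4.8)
The plan is to reduce the statement to a single application of the norm-subGaussian Hoeffding bound (\Cref{lemma:nsgd_hoeffding}) at each fixed outer iteration, followed by a union bound over $t = 0, \dots, T-1$. First I would fix $t$ and condition on all the randomness generated before the mini-batch $\{\xi_t^i\}_{i=1}^M$ of the $x$-update is drawn; by the order of operations in \Cref{alg::neada-adagrad}, both $x_t$ and $y_t$ are then deterministic. Setting $X_i \coloneqq \nabla_x F(x_t, y_t; \xi_t^i) - \nabla_x f(x_t, y_t)$ and letting $\mathcal{F}_i$ be the $\sigma$-algebra generated by $\xi_t^1, \dots, \xi_t^i$ together with the conditioned history, the i.i.d.\ and unbiasedness assumptions give $\mathbb{E}[X_i \mid \mathcal{F}_{i-1}] = 0$, and \Cref{assum:sub_gaussian} makes $X_i \mid \mathcal{F}_{i-1}$ a zero-mean $\sigma$-norm-subGaussian vector, with the (constant, hence trivially $\mathcal{F}_{i-1}$-measurable) parameter $\sigma_i \equiv \sigma$.

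Next I would invoke \Cref{lemma:nsgd_hoeffding} for $X_1, \dots, X_M$ with failure probability $\delta/T$: with probability at least $1 - \delta/T$,
\[
  \Bigl\| \textstyle\sum_{i=1}^M X_i \Bigr\| \leq c \sqrt{ M \sigma^2 \log(2dT/\delta) },
\]
so dividing by $M$ and squaring gives $\bigl\| \frac1M \sum_{i=1}^M \nabla_x F(x_t,y_t;\xi_t^i) - \nabla_x f(x_t,y_t) \bigr\|^2 \leq (c^2 \sigma^2 / M)\log(2dT/\delta)$. A union bound over the $T$ values of $t$ then makes this hold for all $t$ simultaneously with probability at least $1 - \delta$; taking the maximum over $t$ and relabelling the absolute constant ($c^2 \rightsquigarrow c$) yields the claim.

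The step I expect to require the most care is the conditioning argument: one must verify that at the moment the $x$-update mini-batch of outer loop $t$ is drawn, the iterate $y_t$ — which is produced by the inner loop from its own separate sample stream $\hat\xi_t^k$ under stopping criterion II — is already measurable with respect to the conditioning $\sigma$-algebra, so that the $X_i$ genuinely form a conditionally mean-zero, conditionally norm-subGaussian sequence and \Cref{lemma:nsgd_hoeffding} applies verbatim. This is immediate from the structure of \Cref{alg::neada-adagrad} (the inner loop terminates before $g_t^x$ is sampled), but it is worth making explicit; everything else is a routine Markov/union-bound computation.
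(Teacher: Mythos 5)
Your proposal is correct and follows essentially the same route as the paper: apply the norm-subGaussian Hoeffding bound (\Cref{lemma:nsgd_hoeffding}) to the $M$ mini-batch deviations at each fixed $t$, then take a union bound over $t=0,\dots,T-1$ and choose the threshold $k=\Theta\bigl(\tfrac{\sigma^2}{M}\log\tfrac{2dT}{\delta}\bigr)$. Your explicit remark about the measurability of $(x_t,y_t)$ at the time the $x$-mini-batch is drawn is a point the paper leaves implicit, but it does not change the argument.
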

\begin{proof} Firstly, using \Cref{lemma:nsgd_hoeffding}, we have the probability for
  \begin{align*}
    \norm*{\frac{1}{M}\sum_{i=1}^{M} \nabla_x F(x_t, y_t; \xi_t^i) - \nabla_x f(x_t, y_t)}^2 \leq k
  \end{align*}
  is at least $1-2d\exp{\left(-\frac{kM}{c_0^2\sigma^2}\right)}$ for some absolute
  constant $c_0$. Then we have
  \begin{align*}
  &\fakeeq \Prob{\max_{0 \leq t \leq T-1} \norm*{\frac{1}{M}\sum_{i=1}^{M} \nabla_x F(x_t, y_t; \xi_t^i) - \nabla_x f(x_t, y_t)}^2
    > k} \\
  &= \Prob{\text{there exists $0 \leq t \leq T-1$, s.t. } \norm*{\frac{1}{M}\sum_{i=1}^{M} \nabla_x F(x_t, y_t; \xi_t^i) - \nabla_x f(x_t, y_t)}^2
    > k} \\
  &\leq \sum_{t=0}^{T-1} \Prob{\norm*{\frac{1}{M}\sum_{i=1}^{M} \nabla_x F(x_t, y_t; \xi_t^i) - \nabla_x f(x_t, y_t)}^2 > k} \\
  &\leq \sum_{t=0}^{T-1} 2d \exp{\left(-\frac{kM}{c_0^2\sigma^2}\right)} = 2dT\exp{\left(-\frac{kM}{c_0^2\sigma^2}\right)}.
  \end{align*}
  Letting $k$ = $\frac{c_0^2 \sigma^2}{M} \log \left( \frac{2dT}{\delta}\right)$
  gives us the desired result.
\end{proof}

\begin{proof}[Proof of \Cref{thm:high_prob}]
  \renewcommand*{\thetermcounter}{\text{\Alph{termcounter}}} % for labelling the terms

  Using the inner loop algorithm we assumed, with probability at least
  $1 - \delta/(t+1)^2$, we have $\norm*{y_t - y^*(x_t)}^2 \leq \frac{c_1\log\left((t+1)^2/\delta\right)}{t+1}$,
  where $c_1$ is a constant. Then
  \begin{align*}
    &\fakeeq \Prob{\norm*{y_t - y^*(x_t)}^2 \leq \frac{c_1\log\left((t+1)^2/\delta\right)}{t+1} \quad \text{for all $t=0,\dots,T-1$}} \\
    &\geq 1 - \sum_{t=0}^{T-1} \Prob{\norm*{y_t - y^*(x_t)}^2 > \frac{c_1\log\left((t+1)^2/\delta\right)}{t+1}} \\
    &\geq 1 - \delta \sum_{t=0}^{T-1} \frac{1}{(t+1)^2} \\
    &\geq 1 - 2\delta.
  \end{align*}

  We will use $\norm*{y_t - y^*(x_t)}^2 \leq \frac{c_1\log\left((t+1)^2/\delta\right)}{t+1}$
  for $t=0,\dots,T-1$ throughout the proof.
  For the simplicity of notion, we denote the stochastic gradient as $\nabla_x
  \widetilde f(x_t, y_t) \coloneqq \frac{1}{M}\sum_{i=1}^{M}\nabla_x F(x_t, y_t;
  \xi_t^i)$.
  We start by the smoothness of the primal function. According to \Cref{lemma:smooth},
  $\Phi(x)$ is smooth with parameter $l+l\kappa \leq 2l\kappa$, and
  \begin{align*}
    \Phi(x_{t+1}) - \Phi(x_t) 
    &\leq -\frac{\eta}{\sqrt{v_{t+1}}} \inp*{\nabla_x \widetilde f(x_t, y_t)}{\nabla \Phi(x_t)}
    + \frac{\eta^2 l\kappa}{v_{t+1}} \norm*{\nabla_x \widetilde f(x_t, y_t)}^2 \\
    &= -\frac{\eta}{\sqrt{v_{t+1}}} \norm*{\nabla_x f(x_t, y_t)}^2 
    + \frac{\eta^2l\kappa}{v_{t+1}} \norm*{\nabla_x \widetilde f(x_t, y_t)}^2 \\
    & \fakeeq + \frac{\eta}{\sqrt{v_{t+1}}} \inp*{\nabla_x f(x_t, y_t) - \nabla_x \widetilde f(x_t, y_t)}{\nabla_x f(x_t, y_t)} \\
    & \fakeeq + \frac{\eta}{\sqrt{v_{t+1}}} \inp*{\nabla_x f(x_t, y_t) - \nabla \Phi(x_t)}{\nabla_x \widetilde f(x_t, y_t) - \nabla_x f(x_t, y_t)} \\
    & \fakeeq + \frac{\eta}{\sqrt{v_{t+1}}}\inp*{\nabla_x f(x_t, y_t) - \nabla \Phi(x_t)}{\nabla_x f(x_t, y_t)}.
  \end{align*}
  Multiplying both side by $\frac{\sqrt{v_{t+1}}}{\eta}$ and telescoping through $t=0,\dots,T-1$, we have
  \begin{equation}
  \begin{aligned}
  \sum_{t=0}^{T-1} \norm*{\nabla_x f(x_t, y_t)}^2
  &\leq \underbrace{\sum_{t=0}^{T-1} \frac{\sqrt{v_{t+1}}}{\eta}\left(\Phi(x_t) - \Phi(x_{t+1}) \right)}_{\labelterm{term:5}}
  + \underbrace{\sum_{t=0}^{T-1}\frac{l\kappa \eta}{\sqrt{v_{t+1}}} \norm*{\nabla_x \widetilde f(x_t, y_t)}^2}_{\labelterm{term:1}} \\
  &\fakeeq + \underbrace{\sum_{t=0}^{T-1} \inp*{\nabla_x f(x_t, y_t) - \nabla_x \widetilde f(x_t, y_t)}{\nabla_x f(x_t, y_t)}}_{\labelterm{term:2}} \\
  &\fakeeq + \underbrace{\sum_{t=0}^{T-1} \inp*{\nabla_x f(x_t, y_t) - \nabla \Phi(x_t)}{\nabla_x \widetilde f(x_t, y_t) - \nabla_x f(x_t, y_t)}}_{\labelterm{term:3}} \\
  &\fakeeq + \underbrace{\sum_{t=0}^{T-1} \inp*{\nabla_x f(x_t, y_t) - \nabla \Phi(x_t)}{\nabla_x f(x_t, y_t)}}_{\labelterm{term:4}}.
  \end{aligned} \label{eq:A}
  \end{equation}
  There are 5 terms to bound:
  \begin{enumerate}[wide, labelwidth=!, labelindent=0pt]
    \item \Refterm{term:5}:
      \renewcommand*{\thetermcounter}{\text{\roman{termcounter}}}
      \newcounter{temptermcounter}
      \setcounter{temptermcounter}{\value{termcounter}}
      \setcounter{termcounter}{0}

      Firstly, we will bound $\Delta$. Denote $\Delta_t = \Phi(x_t) - \min_x \Phi(x)$.
      By smoothness of $\Phi(\cdot)$
      and telescoping
      \begin{align*}
        &\fakeeq \Delta_{T} - \Delta_0  \\
        &\leq \underbrace{-\sum_{t=0}^{T-1} \frac{\eta}{\sqrt{v_{t+1}}}\norm*{\nabla_x f(x_t, y_t)}^2}_{\labelterm{term:6}}
        + \underbrace{\sum_{t=0}^{T-1} \frac{\eta}{\sqrt{v_{t+1}}}\inp*{\nabla_x f(x_t, y_t) - \nabla_x \widetilde f(x_t, y_t)}{\nabla_x f(x_t, y_t)}}_{\labelterm{term:7}} \\
        &\fakeeq + \underbrace{\sum_{t=0}^{T-1} \frac{l\kappa\eta^2}{v_{t+1}} \norm*{\nabla_x \widetilde f(x_t, y_t)}^2}_{\labelterm{term:8}}
        + \underbrace{\sum_{t=0}^{T-1} \frac{\eta}{\sqrt{v_{t+1}}} \inp*{\nabla_x f(x_t, y_t) - \nabla \Phi(x_t)}{\nabla_x \widetilde f(x_t, y_t)}}_{\labelterm{term:9}}.
      \end{align*}

      \paragraph{\Refterm{term:7}} We can bound this term by
      \begin{align*}
        \text{(\ref{term:7})}
        &= \sum_{t=0}^{T-1} \frac{\eta}{\sqrt{v_t}} \inp*{\nabla_x f(x_t, y_t) - \nabla_x \widetilde f(x_t, y_t)}{\nabla_x f(x_t, y_t)} \\
        & \fakeeq + \sum_{t=0}^{T-1} \left( \frac{\eta}{\sqrt{v_{t+1}}} - \frac{\eta}{\sqrt{v_t}}\right) \inp*{\nabla_x f(x_t, y_t) - \nabla_x \widetilde f(x_t, y_t)}{\nabla_x f(x_t, y_t)} \\
        &\leq \sum_{t=0}^{T-1} \frac{\eta}{\sqrt{v_t}} \inp*{\nabla_x f(x_t, y_t) - \nabla_x \widetilde f(x_t, y_t)}{\nabla_x f(x_t, y_t)} \\
        & \fakeeq + \frac{1}{2} \sum_{t=0}^{T-1} \left( \norm*{\nabla_x f(x_t, y_t) - \nabla_x \widetilde f(x_t, y_t)}^2 + \norm*{\nabla_x f(x_t, y_t)}^2 \right) \left(\frac{\eta}{\sqrt{v_t}} -\frac{\eta}{\sqrt{v_{t+1}}}\right) \\
        &\leq \underbrace{\sum_{t=0}^{T-1} \frac{\eta}{\sqrt{v_t}} \inp*{\nabla_x f(x_t, y_t) - \nabla_x \widetilde f(x_t, y_t)}{\nabla_x f(x_t, y_t)}}_{\labelterm{term:10}} \\
        & \fakeeq + \underbrace{\frac{\eta}{2\sqrt{v_0}} \left(G^2 + \max_{t=0,\dots,T-1}\norm*{\nabla_x f(x_t, y_t) - \nabla_x \widetilde f(x_t, y_t)}^2\right)}_{\labelterm{term:11}},
      \end{align*}
      where the first inequality is by Cauchy-Schwarz and Young's inequality.
      Note that $\sqrt{v_{t+1}} \geq \sqrt{v_t}$.
      The second inequality is by bounded gradient and telescoping the sum.
      The \refterm{term:10} above can be bounded by \Cref{eq:lemma_MDS_1} with
      $\widetilde Z_t = \frac{\eta}{\sqrt{v_t}} \inp*{\nabla_x f(x_t, y_t) - \nabla_x \widetilde f(x_t, y_t)}{\nabla_x f(x_t, y_t)}$
      and $\widetilde Y_t^2 = \frac{\eta^2\sigma^2}{v_t}\norm*{\nabla_x f(x_t, y_t)}^2$.
      $\{Z_t^i \coloneqq \frac{\eta}{\sqrt{v_t}} \inp*{\nabla_x f(x_t, y_t) -
      \nabla_x F(x_t, y_t; \xi_t^i)}{\nabla_x f(x_t, y_t)}
      \}_{t=0,\dots,T-1}^{i=1,\dots,M}$ is a martingale difference sequence (the order within a mini-batch can be arbitrary) as
      \begin{align*}
        \Ep{Z_t^i \mid \mathcal{F_{\text{before}}} } = 0,
      \end{align*}
      \begin{align*}
        \Ep{\left| Z_t^i \right| \mid \mathcal{F_{\text{before}}} }
        &\leq \Ep{\frac{\eta}{2\sqrt{v_t}} \left( \norm*{\nabla_x f(x_t, y_t) - \nabla_x F(x_t, y_t; \xi_t^i)}^2
        + \norm*{\nabla_x f(x_t, y_t)}^2 \right) \mid \mathcal{F_{\text{before}}} } \\
        &\leq \frac{\eta}{2 \sqrt{v_0} } \left( \sigma^2 + G^2 \right) < \infty.
      \end{align*}
      Then with probability at least $1-\delta$,
      $\text{\refterm{term:10}} \leq \frac{3}{4}\lambda\sigma^2 \sum_{t=0}^{T-1} \frac{\eta^2}{v_t}\norm*{\nabla_x f(x_t, y_t)}^2 + \frac{1}{\lambda M}\log(1/\delta)$,
      where $\lambda$ will be determined later.
      The second \refterm{term:11} can be bounded by \Cref{lemma:bound_MDS_3}, that is with probability at
      least $1-\delta$, $\text{\refterm{term:11}} \leq \frac{\eta}{2\sqrt{v_0}} \left(G^2 + \frac{c_2\sigma^2}{M} \log \frac{2dT}{\delta} \right)$
      with an absolute constant $c_2$.

      \paragraph{\Refterm{term:6}$+$(\ref{term:7})} Combining these two terms:
      \begin{align*}
        \text{term }(\ref{term:6})+(\ref{term:7}) 
        &= \frac{3}{4}\lambda\sigma^2 \sum_{t=0}^{T-1} \frac{\eta^2}{v_t}\norm*{\nabla_x f(x_t, y_t)}^2 
        - \sum_{t=0}^{T-1} \frac{\eta}{\sqrt{v_{t+1}}} \norm*{\nabla_x f(x_t, y_t)}^2  \\
        &\fakeeq + \frac{1}{\lambda M}\log(1/\delta) + \frac{\eta}{2\sqrt{v_0}} \left(G^2 + \frac{c_2\sigma^2}{M} \log \frac{2dT}{\delta} \right).
      \end{align*}
      For the first two terms, we have:
      \begin{align*}
        &\fakeeq \frac{3}{4}\lambda\sigma^2 \sum_{t=0}^{T-1} \frac{\eta^2}{v_t}\norm*{\nabla_x f(x_t, y_t)}^2 
        - \sum_{t=0}^{T-1} \frac{\eta}{\sqrt{v_{t+1}}} \norm*{\nabla_x f(x_t, y_t)}^2  \\
        &\leq \frac{3}{4}\lambda\sigma^2 \sum_{t=0}^{T-1} \frac{\eta^2}{v_t}\norm*{\nabla_x f(x_t, y_t)}^2 
        - \sum_{t=0}^{T-1} \frac{\eta \sqrt{v_0}}{v_{t+1}} \norm*{\nabla_x f(x_t, y_t)}^2  \\
        &= \frac{3}{4}\lambda\sigma^2 \sum_{t=0}^{T-1} \frac{\eta^2}{v_t}\norm*{\nabla_x f(x_t, y_t)}^2 
        - \sum_{t=0}^{T-1} \frac{\eta \sqrt{v_0}}{v_t} \norm*{\nabla_x f(x_t, y_t)}^2 \\
        &\fakeeq + \sum_{t=0}^{T-1} \left(\frac{\eta \sqrt{v_0}}{v_t} - \frac{\eta \sqrt{v_0}}{v_{t+1}}\right) \norm*{\nabla_x f(x_t, y_t)}^2  \\
        &\leq \left(\frac{3}{4}\lambda\sigma^2 - \frac{\sqrt{v_0}}{\eta}\right) \sum_{t=0}^{T-1} \frac{\eta^2}{v_t}\norm*{\nabla_x f(x_t, y_t)}^2 
        + \frac{\eta}{\sqrt{v_0}} G^2.
      \end{align*}
      By letting $\lambda = \frac{4\sqrt{v_0}}{3\eta\sigma^2}$, we can get rid of the
      first term. Therefore,
      \begin{align*}
        \text{term }(\ref{term:6})+(\ref{term:7}) \leq \frac{3\eta}{2\sqrt{v_0}} G^2
        + \frac{3\eta\sigma^2}{4M\sqrt{v_0}}\log(1/\delta) + \frac{c_2\eta\sigma^2}{2M\sqrt{v_0}} \log \frac{2dT}{\delta}.
      \end{align*}

      \paragraph{\Refterm{term:8}} We can use \Cref{lemma:bound_sum} with $\alpha=1$ (the second inequality below):
      \begin{align*}
        &\fakeeq \text{\refterm{term:8}} \\
        &\leq l\kappa \eta^2 \left( \frac{v_0}{v_0} + \sum_{t=0}^{T-1} \frac{1}{v_{t+1}}\norm*{\nabla_x \widetilde f(x_t, y_t)}^2\right) \\
        &\leq l\kappa \eta^2 \left( 1 + \log\left(\frac{1}{v_0} \left(v_0 + \sum_{t=0}^{T-1} \norm*{\nabla_x \widetilde f(x_t, y_t)}^2 \right) \right) \right) \\
        &\leq l\kappa \eta^2 \left( 1 + \log\left(v_0 + \sum_{t=0}^{T-1} \norm*{\nabla_x \widetilde f(x_t, y_t)}^2 \right) -\log v_0 \right) \\
        &\leq l\kappa \eta^2 \left( 1 + \log\left(v_0 + 2\sum_{t=0}^{T-1} \norm*{\nabla_x f(x_t, y_t)}^2 
        + 2\sum_{t=0}^{T-1} \norm*{\nabla_x \widetilde f(x_t, y_t) - \nabla_x f(x_t, y_t)}^2\right) - \log v_0 \right) \\
        &\leq l\kappa \eta^2 \left( 1 + \log\left(v_0 + 2G^2T
        + 2T\max_{t=0,\dots,T-1} \norm*{\nabla_x \widetilde f(x_t, y_t) - \nabla_x f(x_t, y_t)}^2\right) - \log v_0 \right) \\
        &\leq l\kappa \eta^2 \left( 1 + \log\left( 1 + \frac{2G^2T}{v_0}
        + \frac{2c_2T \sigma^2}{v_0 M} \log \frac{2dT}{\delta}\right) \right),
      \end{align*}
      where we use $\norm*{x+y}^2 \leq 2\norm*{x}^2 + 2\norm*{y}^2$ in the third inequality,
      and by \Cref{lemma:bound_MDS_3}, with probability $1-\delta$, we have
      the last inequality.

      \paragraph{\Refterm{term:9}} We divide this term into two parts by Young's inequality:
      \begin{align*}
        \text{\refterm{term:9}} 
        &= \sum_{t=0}^{T-1} \inp*{\nabla_x f(x_t, y_t) - \nabla \Phi(x_t)}{\frac{\eta}{\sqrt{v_{t+1}}} \nabla_x \widetilde f(x_t, y_t)} \\
        &\leq \frac{1}{2}\sum_{t=0}^{T-1}\norm*{\nabla_x f(x_t, y_t) - \nabla \Phi(x_t)}^2 + \frac{1}{2} \sum_{t=0}^{T-1}\frac{\eta^2}{v_{t+1}} \norm*{\nabla_x \widetilde f(x_t, y_t)}^2.
      \end{align*}
      The second term can be upper bounded by the same derivation as we bound \refterm{term:8}.
      As for the first term, we have
      \begin{align*}
        \sum_{t=0}^{T-1} \norm*{\nabla_x f(x_t, y_t) - \nabla \Phi(x_t)}^2
        &\leq l^2 \sum_{t=0}^{T-1} \norm*{y_t - y^*(x_t)}^2 \\
        &\leq l^2 \sum_{t=0}^{T-1} \frac{c_1\log\left((t+1)^2/\delta\right)}{t+1} \\
        &\leq l^2 \sum_{t=0}^{T-1} \frac{c_1\log\left(T^2/\delta\right)}{t+1} \\
        &\leq c_1 l^2 \left(1 + \log T \right) \log\left( T^2 / \delta \right).
      \end{align*}

      \renewcommand*{\thetermcounter}{\text{\Alph{termcounter}}}
      \setcounter{termcounter}{\value{temptermcounter}}

      \paragraph{In total} Summarizing the above bounds, we have
      \begin{align*}
        \Delta_{T} - \Delta_0
        &\leq \frac{3\eta}{2\sqrt{v_0}} G^2 + \frac{3\eta\sigma^2}{4M\sqrt{v_0}}\log(1/\delta) 
        + \frac{c_2\eta\sigma^2}{2M\sqrt{v_0}}  \log \frac{2dT}{\delta}
        + \frac{c_1 l^2}{2} \left(1 + \log T \right) \log\left( T^2 / \delta \right) \\
        &\fakeeq  + \left(l\kappa + \frac{1}{2}\right)\eta^2 \left(1 + \log\left( 1 + \frac{2G^2T}{v_0} + \frac{2c_2\sigma^2T}{v_0 M} \log \frac{2dT}{\delta}\right) \right).
      \end{align*}
      And $\Delta$ has the same upper bound as above, which is $O\big(\log (T) \log (T/\delta)\big)$.
      Let us go back to \refterm{term:5}, where we have
      \begin{align*}
        \text{\refterm{term:5}} 
        &= \sum_{t=0}^{T-1} \frac{\sqrt{v_{t+1}}}{\eta} \left( \Delta_t - \Delta_{t+1}\right) \\
        &\leq \frac{\sqrt{v_1}}{\eta}\Delta_0 + \frac{1}{\eta} \sum_{t=1}^{T-1}\Delta_t \left(\sqrt{v_{t+1}} - \sqrt{v_t}\right) \\
        &\leq \frac{\sqrt{v_1}}{\eta}\Delta + \frac{1}{\eta}\Delta \sum_{t=1}^{T-1} \left(\sqrt{v_{t+1}} - \sqrt{v_t}\right) \\
        &= \frac{\sqrt{v_T}}{\eta}\Delta.
      \end{align*}

    \item \Refterm{term:1}:

      We can bound this term by \Cref{lemma:bound_sum} (the second inequality):
      \begin{align*}
        \text{\refterm{term:1}} 
        &\leq l \kappa \eta \left(\frac{v_0}{\sqrt{v_0}} + \sum_{t=0}^{T-1}\frac{1}{\sqrt{v_{t+1}}} \norm*{\nabla_x \widetilde f(x_t, y_t)}^2\right) \\
        &\leq 2 l \kappa \eta \sqrt{v_0 + \sum_{t=0}^{T-1} \norm*{\nabla_x \widetilde f(x_t, y_t)}^2}
        = 2 l \kappa \eta \sqrt{v_T}.
      \end{align*}

    \item \Refterm{term:2}:

      We can apply \Cref{eq:lemma_MDS_1} with $\widetilde Z_t = \inp*{\nabla_x f(x_t, y_t) - \nabla_x \widetilde f(x_t, y_t)}{\nabla_x f(x_t, y_t)}$,
      $\widetilde Y_t^2 = \sigma^2\norm*{\nabla_x f(x_t, y_t)}^2$ and $\lambda = \frac{1}{3\sigma^2}$,
      then with probability $1-\delta$,
      \begin{align*}
        \text{\refterm{term:2}} 
        \leq \frac{1}{4}\sum_{t=0}^{T-1}\norm*{\nabla_x f(x_t, y_t)}^2 + \frac{3\sigma^2}{M}\log(1/\delta),
      \end{align*}
      where the first term can be moved to the LHS of \Cref{eq:A}.

    \item \Refterm{term:3}:

      Using \Cref{eq:lemma_MDS_1} with $\widetilde Z_t = \inp*{\nabla_x f(x_t, y_t) - \nabla \Phi(x_t)}{\nabla_x \widetilde f(x_t, y_t) - \nabla_x f(x_t, y_t)}$,
      $\widetilde Y_t^2 = \sigma^2 \norm*{\nabla_x f(x_t, y_t) - \nabla \Phi(x_t)}^2$
      and $\lambda = 1/\sigma^2$, we have with probability at least
      $1-\delta$,
      \begin{align*}
        \text{\refterm{term:3}}
        &\leq \frac{3}{4} \sum_{t=0}^{T-1} \norm*{\nabla_x f(x_t, y_t) - \nabla \Phi(x_t)}^2
        + \frac{\sigma^2}{M} \log(1/\delta) \\
        &\leq \frac{3l^2}{4} \sum_{t=0}^{T-1} \norm*{y_t - y^*(x_t)}^2
        + \frac{\sigma^2}{M} \log(1/\delta) \\
        &\leq \frac{3l^2}{4} \sum_{t=0}^{T-1} \frac{c_1\log\left((t+1)^2/\delta\right)}{t+1}
        + \frac{\sigma^2}{M} \log(1/\delta) \\
        &\leq \frac{3c_1 l^2}{4} \left(1 + \log T \right) \log\left( T^2 / \delta \right)
        + \frac{\sigma^2}{M} \log(1/\delta).
      \end{align*}

    \item \Refterm{term:4}:

      By Cauchy-Schwarz and Young's inequality, we have
      \begin{align*}
        \text{\refterm{term:4}}
        &\leq \frac{1}{2} \sum_{t=0}^{T-1} \norm*{\nabla_x f(x_t, y_t) - \nabla \Phi(x_t)}^2
        + \frac{1}{2} \sum_{t=0}^{T-1} \norm*{\nabla_x f(x_t, y_t)}^2 \\
        &\leq \frac{c_1 l^2}{2} \left(1 + \log T \right) \log\left( T^2 / \delta \right)
        + \frac{1}{2} \sum_{t=0}^{T-1} \norm*{\nabla_x f(x_t, y_t)}^2,
      \end{align*}
      where the second term can be moved the LHS of \Cref{eq:A}.
  \end{enumerate}

  Summarizing the terms~(\ref{term:5}),~(\ref{term:1}),~(\ref{term:2}),~(\ref{term:3})~and~(\ref{term:4}), we can re-write \Cref{eq:A} as
  \begin{align*}
    \frac{1}{4} \sum_{t=0}^{T-1} \norm*{\nabla_x f(x_t, y_t)}^2
    \leq \left(2l\kappa \eta + \frac{\Delta}{\eta}\right) \sqrt{v_T}
    + \frac{4\sigma^2}{M} \log(1/\delta) + \frac{5}{4}c_1 l^2 \left(1 + \log T \right) \log\left( T^2 / \delta \right).
  \end{align*}
  It remains to handle $\sqrt{v_T}$ in the RHS:
  \begin{align*}
    \sqrt{v_T} 
    &= \sqrt{v_0 + \sum_{t=0}^{T-1}\norm*{\nabla_x \widetilde f(x_t, y_t)}^2} \\
    &= \sqrt{v_0 + \sum_{t=0}^{T-1}\norm*{\nabla_x f(x_t, y_t) + \nabla_x \widetilde f(x_t, y_t) - \nabla_x f(x_t, y_t)}^2} \\
    &\leq \sqrt{v_0 + 2\sum_{t=0}^{T-1}\norm*{\nabla_x f(x_t, y_t)}^2 + 2 \sum_{t=0}^{T-1} \norm*{\nabla_x \widetilde f(x_t, y_t) - \nabla_x f(x_t, y_t)}^2} \\
    &\leq \sqrt{v_0 + 2\sum_{t=0}^{T-1}\norm*{\nabla_x f(x_t, y_t)}^2 + 2 T \max_{t=0,\dots,T-1} \norm*{\nabla_x \widetilde f(x_t, y_t) - \nabla_x f(x_t, y_t)}^2} \\
    &\leq \sqrt{v_0} + \sqrt{2\sum_{t=0}^{T-1}\norm*{\nabla_x f(x_t, y_t)}^2} + \sqrt{2T\max_{t=0,\dots,T-1} \norm*{\nabla_x \widetilde f(x_t, y_t) - \nabla_x f(x_t, y_t)}^2} \\
    &\leq \sqrt{v_0} + \sqrt{2\sum_{t=0}^{T-1}\norm*{\nabla_x f(x_t, y_t)}^2} + \sigma\sqrt{\frac{2c_2T}{M}\log\frac{2dT}{\delta}},
  \end{align*}
  where the last inequality holds by \Cref{lemma:bound_MDS_3}.
  With this, in total,
  \begin{align*}
    \frac{1}{4} \sum_{t=0}^{T-1} \norm*{\nabla_x f(x_t, y_t)}^2
    &\leq \sqrt{2} \left(2l\kappa \eta + \frac{\Delta}{\eta}\right) \sqrt{\sum_{t=0}^{T-1} \norm*{\nabla_x f(x_t, y_t)}^2} \\
    &\fakeeq + \left(2l\kappa \eta + \frac{\Delta}{\eta}\right) \left(\sqrt{v_0} + \sigma\sqrt{\frac{2c_2T}{M}\log\frac{2dT}{\delta}} \right) \\
    &\fakeeq + \frac{4 \sigma^2}{M} \log(1/\delta) + \frac{5}{4}c_1 l^2 \left(1 + \log T \right) \log\left( T^2 / \delta \right).
  \end{align*}
  Regarding this inequality as a quadratic of $\sqrt{\sum_{t=0}^{T-1} \norm*{\nabla_x f(x_t, y_t)}^2}$
  and solving for its positive root, we have
  \begin{align*}
    &\fakeeq \sum_{t=0}^{T-1} \norm*{\nabla_x f(x_t, y_t)}^2  \\
    &\leq \left\{\rule{0cm}{1cm}\right. 2\sqrt{2}\left(2 l \kappa \eta + \frac{\Delta}{\eta}\right)
     + 2 \sqrt{\splitfrac{2\left(2 l \kappa \eta + \frac{\Delta}{\eta}\right)^2
      + \left(2l\kappa \eta + \frac{\Delta}{\eta}\right) \left(\sqrt{v_0} + \sigma\sqrt{\frac{2c_2T}{M}\log\frac{2dT}{\delta}} \right)}{
+ \frac{4 \sigma^2}{M} \log(1/\delta) + \frac{5}{4}c_1 l^2 \left(1 + \log T \right) \log\left( T^2 / \delta \right)}} \left.\rule{0cm}{1cm}\right\}^2 \\
    &\leq 32\left(2 l \kappa \eta + \frac{\Delta}{\eta}\right)^2 
    + 8 \left(2l\kappa \eta + \frac{\Delta}{\eta}\right) \left(\sqrt{v_0} + \sigma\sqrt{\frac{2c_2T}{M}\log\frac{2dT}{\delta}} \right) \\
    &\fakeeq + \frac{32 \sigma^2}{M} \log(1/\delta) + 10 c_1 l^2 \left(1 + \log T \right) \log\left( T^2 / \delta \right),
  \end{align*}
  which gives us
  \begin{gather*}
    \frac{1}{T}\sum_{t=0}^{T-1} \norm*{\nabla_x f(x_t, y_t)}^2
    \leq \frac{1}{T}\Bigg[32\left(2 l \kappa \eta + \frac{\Delta}{\eta}\right)^2 
    + 8 \sqrt{v_0} \left(2l\kappa \eta + \frac{\Delta}{\eta}\right)
    + \frac{32 \sigma^2}{M} \log(1/\delta) \\
    \fakeeq + 10c_1 l^2 \left(1 + \log T \right) \log\left( T^2 / \delta \right) \Bigg]
    + \frac{1}{\sqrt{T}}\left[ \frac{8\sqrt{2}\sigma}{\sqrt{M}} \left(2l\kappa \eta + \frac{\Delta}{\eta}\right) \sqrt{c_2 \log \frac{2dT}{\delta}} \right].
  \end{gather*}

\end{proof}

\end{document}